\newtheorem{theorem}{Theorem}[section]
\newtheorem{definition}[theorem]{Definition}
\newtheorem{proposition}[theorem]{Proposition}
\newtheorem{lemma}[theorem]{Lemma}
\newtheorem{theoremn}{Main Theorem}
\newenvironment{proofA}{{\noindent\it Proof of Theorem \ref{A}.}\quad}{$\square$}
\title{On localizing subcategories of derived categories of categories of quasi-coherent sheaves on Noetherian algebraic spaces}
\author{Li Lu}
\begin{document}

\maketitle

\section{Introduction}
\subsection{Background}

Classification of localizing subcategories is quite an active subject widely studied by a number of authors (see, for example, \cite{Neeman1992TheDR}, \cite{Hopkins1987GlobalTheory}, \cite{Kanda2015ClassificationSchemes}, \cite{Balmer2005TheCategories}, \cite{Kanda2012ClassifyingSpectrum}, \cite{Takahashi2008ClassifyingRing},
\cite{Tarrio2004BousfieldSchemes}, \cite{Gabriel1962DesAbeliennes},
\cite{Krause2008ThickRings},
\cite{Benson2011StratifyingGroups}
and \cite{Hall2017PerfectStacks}).  In \cite{Neeman1992TheDR}, Neeman and Bokstedt state a remarkable theorem:
\begin{theorem}(Neeman-Bokstedt, 1992)
Let $R$ be a commutative Noetherian ring. We denote by $supp^{-1}(\Phi)$ the subcategory of $\mathbf{D}(R)$ consisting of all $R$-complexes $M^{\bullet}$ with $supp(M^{\bullet}) \subset \Phi$. There is an inclusion-preserving bijection of sets
\begin{eqnarray*}
 \xymatrix{
\{  localizing \ subcategories \ of \ \mathbf{D}(R) \} \ar@< 2pt>[r]^{supp}  & \{  subsets \ of \ Spec(R) \}. \ \ \ \ \ \ \ \ \ \ \ \ \ \ \ \ \ \ \ \ \ \ar[l]^{supp^{-1}}
 }\\
\end{eqnarray*}
\end{theorem}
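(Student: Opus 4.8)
\emph{Proof proposal.}\quad
Write $k(\mathfrak p)=R_{\mathfrak p}/\mathfrak p R_{\mathfrak p}$, and for $M^{\bullet}\in\mathbf{D}(R)$ put $\mathrm{supp}(M^{\bullet})=\{\mathfrak p\in\mathrm{Spec}(R): M^{\bullet}\otimes^{\mathbf{L}}_{R}k(\mathfrak p)\neq 0\}$ (this is the right notion of support here; for a general unbounded complex it differs from $\bigcup_{i}V(\mathrm{ann}\,H^{i})$); for a localizing subcategory $\mathcal L$ set $\mathrm{supp}(\mathcal L)=\bigcup_{M^{\bullet}\in\mathcal L}\mathrm{supp}(M^{\bullet})$. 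The first task is to record the formal properties of $\mathrm{supp}$: it is subadditive on distinguished triangles and additive on coproducts (so $\mathrm{supp}^{-1}(\Phi)$ is always a localizing subcategory and is closed, like every localizing subcategory, under homotopy colimits and truncations), it satisfies $\mathrm{supp}(M^{\bullet}\otimes^{\mathbf{L}}N^{\bullet})=\mathrm{supp}(M^{\bullet})\cap\mathrm{supp}(N^{\bullet})$ (because $(-)\otimes^{\mathbf{L}}k(\mathfrak p)$ lands in coproducts of shifts of $k(\mathfrak p)$), $\mathrm{supp}(k(\mathfrak p))=\{\mathfrak p\}$, and --- crucially, and this is where the Noetherian hypothesis first enters --- $\mathrm{supp}(M^{\bullet})=\varnothing$ if and only if $M^{\bullet}\cong 0$. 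Given these, $\mathrm{supp}\circ\mathrm{supp}^{-1}=\mathrm{id}$ is immediate ($k(\mathfrak p)\in\mathrm{supp}^{-1}(\Phi)$ for $\mathfrak p\in\Phi$), both assignments preserve inclusions, and $\mathcal L\subseteq\mathrm{supp}^{-1}(\mathrm{supp}(\mathcal L))$ is trivial; so the whole statement reduces to the reverse inclusion $\mathrm{supp}^{-1}(\mathrm{supp}(\mathcal L))\subseteq\mathcal L$, which I would extract from two lemmas.

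\emph{(Detection.)} If $\mathfrak p\in\mathrm{supp}(M^{\bullet})$ then $k(\mathfrak p)\in\mathrm{loc}(M^{\bullet})$, the localizing subcategory generated by $M^{\bullet}$. To see this, note first that $M^{\bullet}_{\mathfrak p}=M^{\bullet}\otimes_{R}R_{\mathfrak p}$ lies in $\mathrm{loc}(M^{\bullet})$, since localizing at one element is $\mathrm{hocolim}(M^{\bullet}\xrightarrow{f}M^{\bullet}\to\cdots)$ and localizing at $\mathfrak p$ is a filtered homotopy colimit of these. Choose generators $x_{1},\dots,x_{n}$ of $\mathfrak p$ (here Noetherianity is used again, to keep the Koszul complex perfect) and set $C^{\bullet}=M^{\bullet}_{\mathfrak p}\otimes_{R}K(x_{1},\dots,x_{n})\in\mathrm{thick}(M^{\bullet}_{\mathfrak p})\subseteq\mathrm{loc}(M^{\bullet})$. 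Then $\mathrm{supp}(C^{\bullet})=\mathrm{supp}(M^{\bullet}_{\mathfrak p})\cap V(x_{1},\dots,x_{n})=\{\mathfrak p R_{\mathfrak p}\}$ (using $\mathfrak p\in\mathrm{supp}(M^{\bullet})$), so $C^{\bullet}\neq 0$; and since multiplication by each $x_{i}$ is null-homotopic on $K(x_{1},\dots,x_{n})$, the ideal $\mathfrak p R_{\mathfrak p}$ kills $H^{*}(C^{\bullet})$, so every cohomology module of $C^{\bullet}$ is a $k(\mathfrak p)$-vector space. Picking a non-zero one, $H^{m}(C^{\bullet})[-m]\in\mathrm{loc}(C^{\bullet})$ and $k(\mathfrak p)$ is a direct summand of it, whence $k(\mathfrak p)\in\mathrm{loc}(C^{\bullet})\subseteq\mathrm{loc}(M^{\bullet})$.

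\emph{(Generation.)} Every $M^{\bullet}$ lies in $\mathcal N:=\mathrm{loc}(\{k(\mathfrak p):\mathfrak p\in\mathrm{supp}(M^{\bullet})\})$. Since $\mathcal N$ is generated by a set and $\mathbf{D}(R)$ is compactly generated, Bousfield localization (Neeman) gives a triangle $\Gamma M^{\bullet}\to M^{\bullet}\to LM^{\bullet}\to$ with $\Gamma M^{\bullet}\in\mathcal N$ and $LM^{\bullet}\in\mathcal N^{\perp}$; by Detection $\mathcal N\subseteq\mathrm{loc}(M^{\bullet})$, so $LM^{\bullet}\in\mathrm{loc}(M^{\bullet})$ and $\mathrm{supp}(LM^{\bullet})\subseteq\mathrm{supp}(M^{\bullet})$. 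If $LM^{\bullet}\neq 0$, choose $\mathfrak p$ minimal in the (non-empty, by the vanishing criterion) set $\mathrm{supp}(LM^{\bullet})$; minimality forces $(LM^{\bullet})_{\mathfrak p}$ to have $\mathfrak p R_{\mathfrak p}$-power-torsion cohomology, hence to lie in $\mathbf{D}_{\mathfrak p R_{\mathfrak p}\text{-tors}}(R_{\mathfrak p})$, which is compactly generated by the Koszul complex on generators of $\mathfrak p R_{\mathfrak p}$; on the other hand $\mathfrak p\in\mathrm{supp}(M^{\bullet})$ gives $\mathbf{R}\mathrm{Hom}_{R}(k(\mathfrak p),LM^{\bullet})=0$, which localizes to $\mathbf{R}\mathrm{Hom}_{R_{\mathfrak p}}(k(\mathfrak p),(LM^{\bullet})_{\mathfrak p})=0$ and hence, that Koszul complex lying in $\mathrm{thick}(k(\mathfrak p))$, to $(LM^{\bullet})_{\mathfrak p}=0$, contradicting $\mathfrak p\in\mathrm{supp}(LM^{\bullet})$. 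So $LM^{\bullet}=0$ and $M^{\bullet}\cong\Gamma M^{\bullet}\in\mathcal N$. Granting both lemmas: if $\mathrm{supp}(M^{\bullet})\subseteq\mathrm{supp}(\mathcal L)$, then for each $\mathfrak p\in\mathrm{supp}(M^{\bullet})$ there is $N^{\bullet}\in\mathcal L$ with $\mathfrak p\in\mathrm{supp}(N^{\bullet})$, so $k(\mathfrak p)\in\mathrm{loc}(N^{\bullet})\subseteq\mathcal L$ by Detection, and then $M^{\bullet}\in\mathrm{loc}(\{k(\mathfrak p):\mathfrak p\in\mathrm{supp}(M^{\bullet})\})\subseteq\mathcal L$ by Generation --- exactly $\mathrm{supp}^{-1}(\mathrm{supp}(\mathcal L))\subseteq\mathcal L$.

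The main obstacle is the Generation step, and inside it the vanishing of the ``acyclic part'' $LM^{\bullet}$: everything else is formal once the support dictionary is in place, but this is where one genuinely uses the structure of $\mathbf{D}(R)$ for $R$ Noetherian --- the criterion $\mathrm{supp}(-)=\varnothing\Leftrightarrow(-)\cong 0$ (itself a real statement, proved by reducing to a minimal prime in the classical support and analysing a torsion complex over a local ring), the identification of torsion complexes with the localizing subcategory generated by the residue field, and the compact generation of that subcategory by a Koszul complex. A secondary point needing care is that $\mathrm{supp}$ of a (possibly unbounded, non-finitely-generated) complex is \emph{not} additive over sub-objects, so a naive d\'evissage through finitely generated submodules does not suffice and the localization argument above (or an equivalent) is essential.
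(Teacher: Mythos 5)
First, a point of orientation: the paper never proves this statement --- it is quoted as background from Neeman's 1992 paper --- and the arguments the paper does give prove the algebraic-space analogue (Theorems \ref{miao}, \ref{youyixiaojie}) for \emph{rigid} localizing subcategories, by a different route: classification of indecomposable injectives, minimal $K$-injective resolutions and $supp(\mathcal{F}^{\bullet})=\bigcup_i Ass(\mathcal{I}^i)$, section functors $\Gamma_V$ with specialization-closed support, and a Zorn's-lemma d\'evissage over specialization-closed subsets (Lemmas \ref{shensheng}, \ref{suppxingzhi}(8),(10)), which is essentially Neeman's original argument. Your skeleton (support dictionary, a Detection lemma $\mathfrak p\in\mathrm{supp}(M^{\bullet})\Rightarrow k(\mathfrak p)\in\mathrm{loc}(M^{\bullet})$, and a Generation lemma proved by Bousfield localization against $\mathcal N=\mathrm{loc}\{k(\mathfrak p)\}$ plus Koszul compact generation) is a legitimate alternative architecture, and your Detection lemma is exactly what replaces the rigidity hypothesis in the affine case (every localizing subcategory of $\mathbf{D}(R)$ is a tensor ideal because $\mathbf{D}(R)=\mathrm{loc}(R)$ --- a fact you use implicitly and should state). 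You also correctly isolate the vanishing criterion $\mathrm{supp}=\varnothing\Leftrightarrow 0$ as a genuine input.

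However, two steps are not justified as written, and the second is a real gap. (i) In Detection you assert $H^{m}(C^{\bullet})[-m]\in\mathrm{loc}(C^{\bullet})$; cohomology modules of a complex do not in general lie in the localizing subcategory the complex generates, so this needs an argument. The standard fix is the one you use elsewhere: $C^{\bullet}\otimes^{\mathbf{L}}_{R}k(\mathfrak p)\neq 0$ because $\mathfrak p\in\mathrm{supp}(C^{\bullet})$, it is a direct sum of shifts of $k(\mathfrak p)$, and it lies in $\mathrm{loc}(C^{\bullet})$ by the tensor-ideal property; hence $k(\mathfrak p)\in\mathrm{loc}(C^{\bullet})$. (ii) In Generation, the claim that $\mathbf{R}\mathrm{Hom}_{R}(k(\mathfrak p),LM^{\bullet})=0$ ``localizes to'' $\mathbf{R}\mathrm{Hom}_{R_{\mathfrak p}}(k(\mathfrak p),(LM^{\bullet})_{\mathfrak p})=0$ is not a formal consequence: $k(\mathfrak p)$ is not a compact (or perfect) object, so $\mathbf{R}\mathrm{Hom}$ out of it does not commute with $(-)\otimes_{R}R_{\mathfrak p}$, and $\mathcal N^{\perp}$ is closed under products and triangles but not under the filtered (homotopy) colimits defining localization, so one cannot conclude $(LM^{\bullet})_{\mathfrak p}\in\mathcal N^{\perp}$ either. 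The statement can be repaired, but only with additional input, e.g.\ the torsion adjunction: since $k(\mathfrak p)$ is $\mathfrak p$-local and $\mathfrak p$-torsion, $\mathrm{Hom}(k(\mathfrak p)[i],LM^{\bullet})\cong\mathrm{Hom}(k(\mathfrak p)[i],\mathbf{R}\Gamma_{\mathfrak pR_{\mathfrak p}}((LM^{\bullet})_{\mathfrak p}))$, and by your minimality argument $(LM^{\bullet})_{\mathfrak p}$ has $\mathfrak p R_{\mathfrak p}$-power-torsion cohomology, so $\mathbf{R}\Gamma_{\mathfrak pR_{\mathfrak p}}((LM^{\bullet})_{\mathfrak p})\cong(LM^{\bullet})_{\mathfrak p}$; only then do the Koszul compact generation and $K\in\mathrm{thick}(k(\mathfrak p))$ finish the contradiction. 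This non-compactness of the residue fields is precisely the difficulty that the injective-resolution/local-cohomology d\'evissage used in the paper (orthogonality checked termwise between $j_{\overline{x}*}E(\overline{x})$'s, as in Lemma \ref{chouxiangyujieying}) is designed to sidestep; if you prefer your Bousfield route, you must supply the adjunction step explicitly.
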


Neeman and Bokstedt's theorem is a beautiful result. Tarr{\i}o, L{\'o}pez and Salorio in \cite{Tarrio2004BousfieldSchemes} proved a similar result for a Noetherian formal scheme.

\begin{theorem}(Tarr{\i}o-L{\'o}pez-Salorio, \cite{Tarrio2004BousfieldSchemes}, Theorem 4.12, 2004)\label{TLS1}
For a Noetherian formal scheme $\mathfrak{X}$ there is a bijection between the class of rigid localizing subcategories of $\mathbf{D}_{qct}(\mathfrak{X})$ and the set of all subsets of $\mathfrak{X}$.
\end{theorem}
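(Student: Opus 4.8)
The plan is to run Neeman's strategy for $\mathbf D(R)$ with the formal-affine case as the local input and a finite affine cover for the gluing; passing to \emph{rigid} localizing subcategories is precisely what makes the necessary tensor manipulations and the gluing legitimate, since $\mathbf D_{qct}(\mathfrak X)$ is not unital monoidal but only a module over $\mathbf D_{qc}(\mathfrak X)$ via $-\otimes^{\mathbb L}_{\mathcal O_{\mathfrak X}}-$. First I would set up the support map: for each $x\in\mathfrak X$ choose an affine open formal subscheme $\mathfrak U_x=\mathrm{Spf}(A_x)$ in which $x$ corresponds to an open prime $\mathfrak p_x$, and let $\kappa(x)\in\mathbf D_{qct}(\mathfrak X)$ be the pushforward of the stable Koszul (\v{C}ech) complex on a system of generators of $\mathfrak p_x$, localized at $\mathfrak p_x$ --- the formal-scheme analogue of the residue-field complex. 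Put $\mathrm{supp}(\mathcal F)=\{x:\kappa(x)\otimes^{\mathbb L}_{\mathcal O_{\mathfrak X}}\mathcal F\neq 0\}$. The first block of work is to record that $\mathrm{supp}$ detects $0$, sends triangles and coproducts to unions, is sub-additive for the tensor action, and satisfies $\mathrm{supp}(\kappa(x))=\{x\}$; each of these reduces to a local computation on $\mathfrak U_x$, where it follows from Greenlees--May duality and Neeman's affine theorem.

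Next comes the easy half together with the reduction. For any $\Phi\subseteq\mathfrak X$ the subcategory $\mathrm{supp}^{-1}(\Phi)$ is localizing (closure under coproducts, triangles, and retracts is immediate from the properties of $\mathrm{supp}$) and rigid (sub-additivity of $\mathrm{supp}$ under the action of $\mathbf D_{qc}(\mathfrak X)$), and $\Phi\mapsto\mathrm{supp}^{-1}(\Phi)$ preserves inclusions. Since each $\kappa(x)$ lies in $\mathrm{supp}^{-1}(\{x\})$ with support exactly $\{x\}$, we get $\mathrm{supp}(\mathrm{supp}^{-1}(\Phi))=\Phi$, so $\mathrm{supp}$ is surjective and $\mathrm{supp}^{-1}$ is a section. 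What remains is $\mathrm{supp}^{-1}(\mathrm{supp}(\mathcal L))=\mathcal L$ for a rigid localizing $\mathcal L$; one inclusion is trivial, so everything reduces to: if $\mathcal L$ is rigid localizing, $\Phi:=\mathrm{supp}(\mathcal L)$, and $\mathrm{supp}(\mathcal N)\subseteq\Phi$, then $\mathcal N\in\mathcal L$.

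The core local step is the implication $x\in\Phi\Rightarrow\kappa(x)\in\mathcal L$. Pick $\mathcal F\in\mathcal L$ with $x\in\mathrm{supp}(\mathcal F)$. Restriction to $\mathfrak U_x$ is a localization of $\mathbf D_{qct}$ compatible with $\mathcal L$ because $\mathcal L$ is rigid; on $\mathfrak U_x$ I would use rigidity to tensor $\mathcal F|_{\mathfrak U_x}$ with the stable Koszul complexes on the generators of $\mathfrak p_x$ and with the torsion/completion functors, cutting the support down to $\{x\}$ while staying inside $\mathcal L$. The affine classification theorem --- Neeman's theorem plus Greenlees--May duality, identifying $\mathbf D_{qct}(\mathrm{Spf}(A_x))$ with the complexes of $A_x$-modules whose cohomology is torsion for the ideal of definition and matching its localizing subcategories with the subsets of the underlying space of $\mathrm{Spf}(A_x)$ --- then forces the localizing subcategory generated by the surviving object to coincide with the one generated by $\kappa(x)$; hence $\kappa(x)\in\mathcal L$. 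This step uses that $\mathbf D_{qct}$ of an affine formal scheme is compactly generated, so Bousfield localizations are available.

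It remains to reassemble, which I expect to be the main obstacle. Let $\mathcal M$ be the localizing subcategory generated by $\{\kappa(x):x\in\Phi\}$; the previous step gives $\mathcal M\subseteq\mathcal L$, so it suffices to prove $\mathrm{supp}^{-1}(\Phi)\subseteq\mathcal M$. This is a local-to-global statement: cover $\mathfrak X$ by finitely many affine formal opens, use Mayer--Vietoris/\v{C}ech triangles in $\mathbf D_{qct}(\mathfrak X)$ to express an arbitrary $\mathcal N$ with $\mathrm{supp}(\mathcal N)\subseteq\Phi$ through its restrictions and their overlaps, and on each affine piece invoke the affine classification (where $\mathcal N$ is built out of the $\kappa(x)$ by transfinite d\'{e}vissage along a filtration of the spectrum by specialization-closed subsets, using only closure under coproducts and triangles). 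The trouble is that the monoidal unit $\mathcal O_{\mathfrak X}$ of $\mathbf D_{qct}(\mathfrak X)$ need not be compact and completion is not exact on objects, so one must check carefully that the \v{C}ech, torsion, and completion functors preserve both ``localizing'' and ``rigid'' and that the affine reconstructions glue without losing objects --- this is exactly where the \emph{rigid} hypothesis becomes indispensable, and it is the technical heart that separates the formal-scheme statement from Neeman's original theorem.
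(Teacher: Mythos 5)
First, a point of order: the paper contains no proof of Theorem \ref{TLS1} at all. It is quoted from Alonso Tarr{\'\i}o--Jerem{\'\i}as L{\'o}pez--Souto Salorio \cite{Tarrio2004BousfieldSchemes} purely as motivation, and the arguments actually carried out here concern the analogue for Noetherian separated algebraic spaces (Theorems \ref{miao}, \ref{youyixiaojie}). So your proposal can only be measured against the cited source and against the paper's analogous proof, and relative to both it takes a different route that, as written, has genuine gaps.

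Two of them are concrete. In your core local step ($x\in\Phi\Rightarrow\kappa(x)\in\mathcal{L}$) you propose to stay inside $\mathcal{L}$ while applying ``torsion/completion functors''; but rigidity only licenses tensoring with objects, and derived completion is of the form $R\mathrm{Hom}(\mathbf{R}\Gamma(\mathcal{O}),-)$, not a tensor, so that move is unjustified. Moreover, your choice of $\kappa(x)$ as a localized stable Koszul complex blocks the one-line argument that actually closes this step in both known treatments: if $\kappa(x)$ is the residue-field object, then for $\mathcal{F}\in\mathcal{L}$ with $\kappa(x)\otimes^{\mathbf{L}}\mathcal{F}\neq 0$, rigidity puts $\kappa(x)\otimes^{\mathbf{L}}\mathcal{F}$ in $\mathcal{L}$, and this complex is a direct sum of shifts of $\kappa(x)$ because it is a complex of $\kappa(x)$-vector spaces; since localizing subcategories are closed under summands, $\kappa(x)\in\mathcal{L}$ --- no affine classification, Greenlees--May duality, or compact generation is needed (this is exactly the mechanism of Lemma \ref{xiaojieyinliya} here). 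Second, and decisively, the reassembly step $supp^{-1}(\Phi)\subseteq\mathcal{M}$ --- which you yourself call ``the technical heart'' --- is only described, not proved: you would have to verify that the \v{C}ech and pushforward functors preserve the localizing subcategory generated by the residue objects and that nothing is lost on overlaps, and none of this is carried out. The existing proofs do not glue at all: they argue globally, showing via minimal $K$-injective resolutions and the pointwise classification of indecomposable injectives that every object of $supp^{-1}(\Phi)$ has an injective model whose components are sums of the $E(\overline{x})$ with $x\in\Phi$ (cf.\ Lemma \ref{suppxingzhi}(10)--(11)), and then running a d\'evissage over specialization-closed subsets --- a Zorn's lemma argument on a maximal specialization-closed $Y$ with $\mathbf{R}\Gamma_{Y}(\mathcal{F}^{\bullet})$ in the relevant localizing subcategory, incremented through the triangles for $\mathbf{R}\Gamma_{Y\cup\{x\}/Y}$ and the $\mathfrak{m}_{\overline{x}}^{\,n}$-filtration of $E(\overline{x})$ (cf.\ Lemmas \ref{shensheng} and \ref{suppxingzhi}(8)). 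Until you either supply your gluing argument in full or replace it by such a global d\'evissage, the proposal remains an outline rather than a proof.
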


Our aim is to prove a similar result for a Noetherian separated algebraic space.

\subsection{The Main theorem \ref{gongzuokuang}}

\begin{definition}
Let $S$ be a scheme contained in $\mathbf{Sch}_{fppf}$. Let $\mathfrak{X}$ be an algebraic space over $S$.  
\begin{itemize}
\item We say $\mathfrak{X}$ is a \textit{locally Noetherian algebraic space} if for every scheme $U$ and every $\acute{e}$tale morphism $U\rightarrow \mathfrak{X}$ the scheme $U$ is locally Noetherian. 

\item We say $\mathfrak{X}$ is \textit{quasi-separated} over $S$ if the diagonal morphism $\mathfrak{X}\rightarrow \mathfrak{X}\times_S \mathfrak{X}$ is quasi-compact. (A morphism of algebraic spaces is called quasi-compact if the underlying map of topological spaces is quasi-compact. We say that a continuous map $f:X \rightarrow Y$ is quasi-compact if the inverse image $f^{-1}(V)$ of every quasi-compact open $V\subset Y$ is quasi-compact.)

\item We say $\mathfrak{X}$ is \textit{Noetherian} if $\mathfrak{X}$ is quasi-compact, quasi-separated and locally Noetherian.

\item We say $\mathfrak{X}$ is \textit{separated} over $S$ if the diagonal morphism $\mathfrak{X}\rightarrow \mathfrak{X}\times_S \mathfrak{X}$ is a closed immersion.
\end{itemize}
\end{definition}
Let $S$ be a scheme contained in $\mathbf{Sch}_{fppf}$. Let $\mathfrak{X}$ be a Noetherian separated algebraic space over $S$. For a subset $W$ of $|\mathfrak{X}|$, we denote by $supp^{-1}(W)$ the subcategory of $\mathbf{D}(QCoh(\mathfrak{X}))$ consisting of all complexes $\mathcal{F}^{\bullet}$ of quasi-coherent sheaves on $\mathfrak{X}$ with $supp(\mathcal{F}^{\bullet}) \subset W$. We define the subcategory $\mathcal{L}_W$ as the smallest localizing subcategory of $\mathbf{D}(QCoh(\mathfrak{X}))$ that contains the set  $\{j_{\overline{x}*}\kappa(\overline{x}): x \in W\}$. Here is our first result:
\begin{theoremn}(Theorem \ref{miao})\label{gongzuokuang}
Let $S$ be a scheme contained in $\mathbf{Sch}_{fppf}$. Let $\mathfrak{X}$ be a Noetherian separated algebraic space over $S$. Let $W \subset |\mathfrak{X}|$ be a subset. Then the following categories are the same:
\begin{itemize}
\item (1) $supp^{-1}(W).$
\item (2) $\mathcal{L}_W.$
\item (3) The full subcategory of complexes that are quasi-isomorphic to injective complexes whose components are direct sums of $j_{\overline{x}*}E(\overline{x})$ with $x \in W$.
\end{itemize}
\end{theoremn}
\begin{remark}
According to Lemma \ref{xiaojieyinliya}, they are all rigid  localizing subcategories  of $\mathbf{D}(QCoh(\mathfrak{X}))$.
\end{remark}

\subsection{The Main theorem \ref{xiaojie}}

We will establish a bijective correspondence between the class of rigid localizing subcategories of $\mathbf{D}(QCoh(\mathfrak{X}))$ and the class of $E$-stable subcategories of $QCoh(\mathfrak{X})$ closed under direct  sums  and  summands.

\begin{theoremn}(Theorem \ref{youyixiaojie}, Theorem \ref{yuzhouxiaojie})\label{xiaojie}
Let $S$ be a scheme contained in $\mathbf{Sch}_{fppf}$. Let $\mathfrak{X}$ be a Noetherian separated algebraic space over $S$. 
Consider the following three sets:
\begin{itemize}
\item (1) $\mathbb{LS}=\{ Rigid \ localizing\  subcategories \ of \   \mathbf{D}(QCoh(\mathfrak{X})) \}$.
\item (2) $\mathbb{SX}=\{ Subsets\  of \  |\mathfrak{X}| \}$.
\item (3) $\mathbb{ES}=\left\{
\begin{aligned}
E-stable \ subcategories \ of \ QCoh(\mathfrak{X}) \ closed\\
under \ direct\  sums\  and\  summands
\end{aligned}
\right\}.$
\end{itemize}

All of the three sets are bijectively corresponding to one another:

\begin{eqnarray*}
 \xymatrix{
  \mathbb{LS} \ar[rr]_{supp^{-1}} &  &  \mathbb{SX} \ar@< 2pt>[ll]_{supp} \ar[rr]_{supp} &  &  \mathbb{ES}. \ar@< 2pt>[ll]_{(supp^{-1}(-))_0}
   }\\
    \end{eqnarray*}   
\end{theoremn}

\subsection{Symbols}

\begin{tabular}{cc}
\hline
$\mathbf{Sch}_{fppf}$ &  big $fppf$ site  \\
$S$ &  scheme \\
$\mathfrak{X}$ &  algebraic space over $S$ \\
$|\mathfrak{X}|$ &  underlying topological space of $\mathfrak{X}$ \\
$x \in |\mathfrak{X}|$ &  point of $\mathfrak{X}$ \\
$\overline{x}$ &  geometric point lying over $x$ \\
$QCoh(\mathfrak{X})$ & category of quasi-coherent $O_{\mathfrak{X}}$-sheaves \\
$Coh(\mathfrak{X})$ & category of coherent $O_{\mathfrak{X}}$-sheaves \\
$\mathcal{A}$ &  Abelian category \\
$\mathbf{C}(\mathcal{A})$ &  category of $\mathcal{A}$-complexes \\
$\mathbf{K}(\mathcal{A})$ &  homotopy category of chain complexes in $\mathcal{A}$ \\
$\mathbf{D}(\mathcal{A})$ &  derived category of $\mathcal{A}$ \\
\hline
\end{tabular}

\subsection{Brief outline}

We now give a brief outline of the paper. 

In section \ref{Preliminaries}, we set notation and review some basics of localizing subcategories, torsion theories, $t$-structures, truncation of complexes, homotopy colimits and injective quasi-coherent sheaves.

Section \ref{Local cohomology} is concerned with the local cohomology functors. The local cohomology functor is a helpful tool for the classification of all the rigid localizing subcategories of $\mathbf{D}(QCoh(\mathfrak{X}))$. 

In Section \ref{Small supports}, we will  study basic properties of the small supports.

In Section \ref{zhuyaozhangjie1}, we will prove the Main Theorem \ref{gongzuokuang} and the Main Theorem \ref{xiaojie}.

\section{Preliminaries}
\label{Preliminaries}

\subsection{Localizing subcategories of Grothendieck categories}

Our main references for categories are \cite{Gabriel1962DesAbeliennes}, 
\cite{Grothendieck1957SurI},
\cite{MacLane1963NaturalCommutativity},
\cite{Freyd1964AbelianCategories}.

Let $\mathcal{A}$ be an Abelian category, and let $\mathcal{S}$ be a nonempty full subcategory of $\mathcal{A}$. $\mathcal{S}$ is a Serre subcategory provided that it is closed under extensions, subobjects, and quotient objects. 

Let $\mathcal{A}$ be an Abelian category. Let $\mathcal{S}$ be a Serre subcategory. There exists an Abelian category $\mathcal{A}/\mathcal{S}$ and an exact functor $F:\mathcal{A} \rightarrow \mathcal{A} / \mathcal{S}$, which is essentially surjective and whose kernel is $\mathcal{S}$. The category $\mathcal{A}/\mathcal{S}$ and the functor $F$ are characterized by the following universal property: For any exact functor $G:\mathcal{A} \rightarrow \mathcal{B}$ such that $\mathcal{S} \subset Ker(G)$ there exists a factorization $G=H\circ F$ for a unique exact functor $H:\mathcal{A}/\mathcal{S} \rightarrow \mathcal{B}$.

The Serre subcategory $\mathcal{S}$ is called localizing, if the quotient functor $F:\mathcal{A} \rightarrow \mathcal{A} / \mathcal{S}$ has a right adjoint $G:\mathcal{A} / \mathcal{S} \rightarrow \mathcal{A}$. 

A Grothendieck category is an Abelian category which has coproducts, in which direct limits are exact and which has a generator. A category $\mathcal{A}$ is well-powered if for each $A \in Obj(\mathcal{A})$, the class of subobjects of $A$ is a set. Grothendieck categories are well-powered. Let $\mathcal{A}$ be a Grothendieck category, then $\mathcal{A}$ has products and $\mathcal{A}$ has enough injectives (See \cite{Freyd1964AbelianCategories}, Theorem 6.25).

If $\mathcal{A}$ is a Grothendieck category, then a Serre subcategory $\mathcal{S}$ is localizing if and only if $\mathcal{S}$ is closed under arbitrary coproducts. If $\mathcal{A}$ is a Grothendieck category and $\mathcal{S}$ a localizing subcategory, then the quotient category $\mathcal{A} / \mathcal{S}$ is again a Grothendieck category.

\subsection{Localizing subcategories of triangulated categories}

For the definition of the triangulated category see, for example, \cite{Verdier1996DesAbeliennes}, \cite{Neeman2014TriangulatedCategories.AM-148}, \cite{Weibel2013AnAlgebra}. 

\begin{definition}
Let $\mathcal{D}$ be a triangulated category and let $\mathcal{X}$ be a full additive subcategory of $\mathcal{D}$.
\begin{itemize}
\item(1) $\mathcal{X}$ is called a triangulated subcategory if every object isomorphic to an object of $\mathcal{X}$ is in $\mathcal{X}$, if $\mathcal{X}[1] = \mathcal{X}$, and if for any distinguished triangle
\begin{equation}
X \rightarrow Y \rightarrow Z \rightarrow X[1]
\end{equation}
such that the objects $X$ and $Y$ are in $\mathcal{X}$, the object $Z$ is also in $\mathcal{X}$.
\item(2) We say that $\mathcal{X}$ is thick if $\mathcal{X}$ is triangulated and closed under direct summands.
\item(3) We say that $\mathcal{X}$ is localizing if $\mathcal{X}$ is triangulated and closed under arbitrary direct sums.
\end{itemize}
\end{definition}
\begin{remark}
Localizing subcategories of $\mathcal{D}$ are closed under direct summands (see \cite{Zhang2015TriangulatedCategories}, Proposition 3.6.1).
\end{remark}

\subsection{Torsion theories}
Our main references for the torsion theories are \cite{Dickson1966ACategories}, \cite{Popescu1973AbelianModules}, \cite{Lamber2006TorsionQuotients}.

A torsion theory in a Grothendieck category $\mathcal{A}$ is a couple $(\mathcal{T},\mathcal{F})$ of strictly full additive subcategories called the torsion class $\mathcal{T}$ and the torsion free class $\mathcal{F}$ such that the following conditions hold:
\begin{itemize}
\item (1) $Hom(\mathcal{T},\mathcal{F})=0.$
\item (2) For all $M \in Obj(\mathcal{A})$, there exists $N\subset M$, $N\in Obj(\mathcal{T})$ and $M/N\in Obj(\mathcal{F}).$
\end{itemize}

For every object $M$ there exist the largest subobject $t(M)\subset M$ which is in $\mathcal{T}$ and it is called the torsion part of $M$. 
\begin{equation}\label{daoyichuan}
t:M\mapsto t(M)
\end{equation}
is an additive functor. A torsion theory is hereditary if $\mathcal{T}$ is closed under subobjects, or equivalently, $t$ is left exact functor.

A radical functor, or more generally a preradical functor, has its own long history in the theory of categories and functors. See \cite{Goldman1969RingsQuotients} or \cite{Maranda1964InjectiveStructures} for the case of module category. Let $F, G: \mathcal{A}\rightarrow \mathcal{A}$ be functors. Recall that $F$ is said to be a subfunctor of $G$, denoted by $F \subset G$, if $F(M)$ is a subobject of $G(M)$ for all $M \in \mathcal{A}$ and if $F(f)$ is a restriction of $G(f)$ to $F(M)$ for all $f \in Hom_{\mathcal{A}}(M, N)$.  

\begin{definition}\label{preradical}
A functor $F:\mathcal{A}\rightarrow \mathcal{A}$ is called a preradical functor if $F$ is a subfunctor of $\mathbf{1}$.
\end{definition}
\begin{lemma}(\cite{Yoshino2011AbstractFunctors}, Lemma 1.1)\label{prehanzijiao}
Let $F: \mathcal{A} \rightarrow \mathcal{A}$ be a preradical functor and assume that $F$ is a left exact functor on $\mathcal{A}$. If $N$ is a subobject of $M$, then the equality $F(N) = N \cap F(M)$ holds.
\end{lemma}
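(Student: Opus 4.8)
\emph{Proof sketch.}
The plan is a short diagram chase built on the natural transformation $\eta\colon F\Rightarrow\mathbf{1}$ witnessing that $F$ is a subfunctor of the identity, combined with left exactness of $F$. First I would fix the short exact sequence $0\to N\xrightarrow{i}M\xrightarrow{p}M/N\to 0$ attached to the subobject $N\subseteq M$ and apply $F$; since $F$ is left exact this produces an exact sequence $0\to F(N)\xrightarrow{F(i)}F(M)\xrightarrow{F(p)}F(M/N)$. Because $F\subset\mathbf{1}$, the components $\eta_N\colon F(N)\hookrightarrow N$, $\eta_M\colon F(M)\hookrightarrow M$ and $\eta_{M/N}\colon F(M/N)\hookrightarrow M/N$ are monomorphisms, and naturality of $\eta$ gives commuting squares $\eta_M\circ F(i)=i\circ\eta_N$ and $\eta_{M/N}\circ F(p)=p\circ\eta_M$. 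The first square shows that $F(N)$, viewed inside $M$ via $\eta_M\circ F(i)$, is the same subobject as the image of $F(N)\hookrightarrow N\hookrightarrow M$; hence $F(N)$ is contained both in $N$ and in $F(M)$ as subobjects of $M$, so $F(N)\subseteq N\cap F(M)$ for free. The content is the reverse inclusion.

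For the reverse inclusion I would compute the kernel of the composite $g:=p\circ\eta_M\colon F(M)\to M/N$ in two ways. On one hand $\eta_M$ is a monomorphism, so $\ker g$ is the pullback of $\ker p=N$ along $\eta_M$, i.e. $\ker g=N\cap F(M)$ as a subobject of $M$. On the other hand the second naturality square gives $g=\eta_{M/N}\circ F(p)$, and since $\eta_{M/N}$ is a monomorphism $\ker g=\ker F(p)$; but left exactness of $F$ identifies $\ker F(p)$ with the image of $F(i)$, which is exactly $F(N)$. Comparing, $N\cap F(M)=F(N)$ as subobjects of $M$, which is the claim.

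I do not anticipate a genuine obstacle: the statement is a formal consequence of ``subfunctor of the identity'' plus ``left exact'', and every step takes place in a single abelian category where kernels, images and intersections of subobjects behave as expected. The only points needing a little care are (i) keeping track of the identifications ``$F(N)$ as a subobject of $N$'' versus ``$F(N)$ as a subobject of $F(M)$'' versus ``as a subobject of $M$'', which the naturality squares make unambiguous, and (ii) the elementary fact that for a monomorphism $\eta_M$ one has $\ker(p\circ\eta_M)=\eta_M^{-1}(\ker p)$, here reading $\eta_M^{-1}(N)=N\cap F(M)$; if desired this last point can simply be quoted from the preceding material on intersections of subobjects, so that the argument stays within the conventions already fixed in the paper.
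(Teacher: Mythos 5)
Your argument is correct: the two computations of $\ker(p\circ\eta_M)$ — as $N\cap F(M)$ via the monomorphism $\eta_M$, and as $\ker F(p)=\operatorname{im}F(i)=F(N)$ via naturality and left exactness — do establish $F(N)=N\cap F(M)$ as subobjects of $M$. The paper itself offers no proof, only the citation to Yoshino's Lemma 1.1, and your diagram chase is exactly the standard argument given there, so there is nothing to flag.
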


\begin{definition}\label{radical}
A preradical functor $F$ is called a radical functor if $F(M/F(M)) = 0$ for all $M \in Obj(\mathcal{A})$. 
\end{definition}
If $F: \mathcal{A}\rightarrow \mathcal{A}$ is a left exact radical functor, then there is a hereditary torsion theory $(\mathcal{T}_F, \mathcal{F}_F)$ by setting
\begin{equation}
\mathcal{T}_F = \{ M \in \mathcal{A} | F(M) = M \},  \nonumber
\end{equation}
\begin{equation}\label{yichuan}
\mathcal{F}_F = \{ M \in \mathcal{A} | F(M) = 0 \}.
\end{equation}

\subsection{$t$-structures}
The notion of a $t$-structure arose in the work \cite{Beilinson1982FaisceauxI} of Beilinson, Bernstein, Deligne, and Gabber on perverse sheaves.

Let $\mathcal{D}$ be a triangulated category. $t$-structure in $\mathcal{D}$ is a pair $\mathbf{t} = (\mathcal{U}, \mathcal{W})$ of full subcategories, closed under taking direct summands in $\mathcal{D}$, which satisfy the following properties:
\begin{itemize}
\item (t-S.1) $Hom_\mathcal{D}(U, W[-1])=0$, for all $U \in \mathcal{U}$ and $W \in \mathcal{W}$; 
\item (t-S.2) $\mathcal{U}[1]\subset \mathcal{U},$ $\mathcal{W}[-1]\subset \mathcal{W}$;
\item (t-S.3) for each $Y \in \mathcal{D}$, there is a triangle $A \rightarrow Y \rightarrow B \rightarrow A[1]$ in $\mathcal{D}$, where $A \in \mathcal{U}$ and $B \in \mathcal{W}[-1]$.
\end{itemize}

A $t$-structure $\mathbf{t} = (\mathcal{U}, \mathcal{W})$ in $\mathcal{D}$ is called a stable $t$-structure on $\mathcal{D}$ if $\mathcal{U}$ and $\mathcal{W}$ are triangulated subcategories.

\begin{theorem}(\cite{Miyachi1991LocalizationCategories}, Proposition 2.6)\label{miqi}
Let $\mathcal{D}$ be a triangulated category and $\mathcal{U}$ a triangulated subcategory of $\mathcal{D}$. Then the following conditions are equivalent for $\mathcal{U}$.
\begin{itemize}
\item (1) There is a triangulated subcategory $\mathcal{W}$ of $\mathcal{D}$ such that $(\mathcal{U}, \mathcal{W})$ is a stable $t$-structure on $\mathcal{D}$.
\item (2) The natural embedding functor $i : \mathcal{U} \rightarrow \mathcal{D}$ has a right adjoint $\rho : \mathcal{D} \rightarrow \mathcal{U}$. 
\end{itemize}
If it is the case, setting $\delta=i\circ\rho:\mathcal{D} \rightarrow \mathcal{D}$,we have the equalities $\mathcal{U} =Im(\delta)$ and $\mathcal{W} =Ker(\delta)$. There is an natural morphism $\phi: \delta \rightarrow \mathbf{1}$, where $\mathbf{1}$ is the identity functor on $\mathcal{D}$. Every $C \in \mathcal{D}$ can be embedded in a triangle of the form
\begin{equation}
\delta(C) \xrightarrow{\phi(C)} C \rightarrow D \rightarrow \delta(C)[1].
\end{equation}
\end{theorem}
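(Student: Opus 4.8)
The plan is to prove the two implications separately; this is an instance of the standard dictionary between Bousfield (co)localization, stable $t$-structures, and adjoints of inclusions of triangulated subcategories, so most verifications are formal once the right observations are in place. Two preliminary remarks guide everything. First, since $\mathcal{U}$ is a \emph{full} subcategory, the inclusion $i\colon\mathcal{U}\to\mathcal{D}$ is fully faithful. Second, in a stable $t$-structure $(\mathcal{U},\mathcal{W})$ the subcategory $\mathcal{W}$ is triangulated, hence $\mathcal{W}[-1]=\mathcal{W}$ (in fact $\mathcal{W}[n]=\mathcal{W}$ for all $n$), so axiom (t-S.1) upgrades to $\mathrm{Hom}_\mathcal{D}(U,W[n])=0$ for all $U\in\mathcal{U}$, $W\in\mathcal{W}$, $n\in\mathbb{Z}$; I will use this orthogonality repeatedly.

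For $(2)\Rightarrow(1)$: assume $i$ has a right adjoint $\rho$, with unit $\eta\colon\mathbf{1}_\mathcal{U}\to\rho i$ and counit $\phi\colon\delta:=i\rho\to\mathbf{1}_\mathcal{D}$. Because $i$ is fully faithful, $\eta$ is an isomorphism, and the triangle identities then force $\rho(\phi(C))$ to be an isomorphism for every $C\in\mathcal{D}$. I would first invoke the standard fact that a right adjoint of a triangulated functor between triangulated categories is again triangulated, so $\rho$ — and hence $\delta$ — commutes with shifts and carries triangles to triangles. Set $\mathcal{W}:=\mathrm{Ker}(\delta)=\{C\in\mathcal{D}:\rho(C)=0\}$. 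Since $\rho$ is additive and triangulated, $\mathcal{W}$ is closed under shifts, under cones (if two vertices of a triangle lie in $\mathcal{W}$ so does the third), and under direct summands; and $\mathcal{U}$ is closed under summands because $\delta(X\oplus Y)=\delta X\oplus\delta Y$ and a direct sum of morphisms is invertible iff each summand is, together with the fact that $\phi_X$ is invertible precisely when $X\in\mathcal{U}$. For $C\in\mathcal{D}$, complete $\phi(C)$ to a triangle $\delta(C)\xrightarrow{\phi(C)}C\to D\to\delta(C)[1]$; applying $\rho$ and using that $\rho(\phi(C))$ is an isomorphism gives $\rho(D)=0$, i.e.\ $D\in\mathcal{W}=\mathcal{W}[-1]$, which is (t-S.3). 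Axiom (t-S.2) is immediate from $\mathcal{U}$ and $\mathcal{W}$ being triangulated, and (t-S.1) follows from the adjunction: $\mathrm{Hom}_\mathcal{D}(iU,W)\cong\mathrm{Hom}_\mathcal{U}(U,\rho W)=\mathrm{Hom}_\mathcal{U}(U,0)=0$. Hence $(\mathcal{U},\mathcal{W})$ is a stable $t$-structure.

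For $(1)\Rightarrow(2)$: given the stable $t$-structure, choose for each $Y\in\mathcal{D}$ (via (t-S.3)) a triangle $A_Y\xrightarrow{u_Y}Y\xrightarrow{v_Y}B_Y\to A_Y[1]$ with $A_Y\in\mathcal{U}$ and $B_Y\in\mathcal{W}[-1]=\mathcal{W}$. I would show that $Y\mapsto A_Y$ is (the object part of) a functor $\rho$ right adjoint to $i$. For $g\colon Y\to Y'$, the composite $v_{Y'}\circ g\circ u_Y\colon A_Y\to B_{Y'}$ vanishes since $A_Y\in\mathcal{U}$ and $B_{Y'}\in\mathcal{W}$, so (by exactness of $\mathrm{Hom}_\mathcal{D}(A_Y,-)$ on the defining triangle of $Y'$) $g\circ u_Y$ factors through $u_{Y'}$; the lift $A_Y\to A_{Y'}$ is \emph{unique} because $\mathrm{Hom}_\mathcal{D}(A_Y,B_{Y'}[-1])=0$. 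This makes $\rho$ a well-defined functor equipped with a natural transformation $i\rho\to\mathbf{1}_\mathcal{D}$. Applying $\mathrm{Hom}_\mathcal{D}(iU,-)$ to the triangle of $Y$ and using $\mathrm{Hom}_\mathcal{D}(U,B_Y)=\mathrm{Hom}_\mathcal{D}(U,B_Y[-1])=0$ gives a natural isomorphism $\mathrm{Hom}_\mathcal{D}(iU,Y)\cong\mathrm{Hom}_\mathcal{D}(iU,A_Y)=\mathrm{Hom}_\mathcal{U}(U,\rho Y)$, so $\rho$ is right adjoint to $i$.

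Finally, for the supplementary assertions: $\delta=i\rho$ lands in $\mathcal{U}$ by construction, while every $X\in\mathcal{U}$ satisfies $X\cong\delta(X)$ (as $\phi_X$ is invertible there), so $\mathcal{U}=\mathrm{Im}(\delta)$; and $\mathcal{W}=\mathrm{Ker}(\delta)$ by the very construction, consistent with the $\mathcal{W}$ of the $t$-structure because the triangle $\delta(C)\to C\to D\to\delta(C)[1]$ exhibits $D\in\mathrm{Ker}(\delta)$ and the decomposition of $C$ through $\mathcal{U}$ and $\mathcal{W}$ is unique up to canonical isomorphism. The morphism $\phi$ and the displayed triangle are precisely the counit and the completed triangle constructed above. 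I expect the one genuinely delicate step to be the well-definedness and functoriality of $\rho$ in $(1)\Rightarrow(2)$ — checking that the chosen triangles assemble coherently, with both existence \emph{and} uniqueness of the connecting maps — since that is exactly where the orthogonality $\mathrm{Hom}(\mathcal{U},\mathcal{W}[n])=0$ and the long exact sequences of the triangles must be combined carefully; everything else reduces to the triangulated-functor lemma for $\rho$ and routine diagram chasing.
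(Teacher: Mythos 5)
Your argument is correct: the $(2)\Rightarrow(1)$ direction via $\mathcal{W}=\mathrm{Ker}(i\rho)$ and the counit triangle, and the $(1)\Rightarrow(2)$ direction constructing $\rho$ from the decomposition triangles using the orthogonality $\mathrm{Hom}(\mathcal{U},\mathcal{W}[n])=0$ for existence and uniqueness of the lifts, is exactly the standard Bousfield-localization proof. The paper itself offers no proof of this statement, merely citing Miyachi's Proposition 2.6, and your write-up reproduces that cited argument in its usual form, including the checks (closure under summands, $\mathcal{U}=\mathrm{Im}(\delta)$, $\mathcal{W}=\mathrm{Ker}(\delta)$) needed to match the paper's definition of a stable $t$-structure.
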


\subsection{Truncation of complexes}
Let $\mathcal{A}$ be an Abelian category. Let $M^{\bullet}$ be a chain complex. There are several ways to truncate the complex $M^{\bullet}$:
\begin{itemize}
\item (1) The left brutal truncation $\sigma_{\leq n}$ is the subcomplex $\sigma_{\leq n}M^{\bullet}$ defined by the rule 
\begin{equation}
(\sigma_{\leq n}M^{\bullet})^i=\left\{
\begin{aligned}
0 \ (i>n)\\
M^i \  (i\leq n).
\end{aligned}
\right.
\end{equation}
\item (2) The right brutal truncation $\sigma_{\geq n}$ is the subcomplex $\sigma_{\geq n}M^{\bullet}$ defined by the rule 
\begin{equation}
(\sigma_{\geq n}M^{\bullet})^i=\left\{
\begin{aligned}
0 \ (i<n)\\
M^i \  (i\geq n).
\end{aligned}
\right.
\end{equation}
\item (3) The left good truncation $\tau_{\leq n}$ is the subcomplex $\tau_{\leq n}M^{\bullet}$ defined by the rule
\begin{equation}
(\tau_{\leq n}M^{\bullet})^i=\left\{
\begin{aligned}
0 \ (i>n)\\
Kerd^n \ (i=n) \\
M^i \  (i< n).
\end{aligned}
\right.
\end{equation}
\item (4) The right good truncation $\tau_{\geq n}$ is the subcomplex $\tau_{\geq n}M^{\bullet}$ defined by the rule 
\begin{equation}
(\tau_{\geq n}M^{\bullet})^i=\left\{
\begin{aligned}
0 \ (i\geq n)\\
Imd^{n-1} \ (i=n-1) \\
M^i \  (i< n-1).
\end{aligned}
\right.
\end{equation}
\end{itemize}
\begin{lemma}[\cite{Zhang2015TriangulatedCategories}, Lemma 2.6.1]\label{jieduanzhenghe} Let $\mathcal{A}$ be an Abelian category. Let $M^{\bullet}$ be a chain complex. Then
\begin{equation}
0\rightarrow \sigma_{\geq n} M^{\bullet} \rightarrow M^{\bullet}\rightarrow \sigma_{\leq n-1} M^{\bullet} \rightarrow 0
\end{equation}
is an exact sequence.
\end{lemma}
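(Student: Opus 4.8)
The plan is to verify directly that the displayed sequence is a short exact sequence in the category $\mathbf{C}(\mathcal{A})$ of chain complexes, using the standard fact that a sequence of complexes is exact precisely when it is exact in each degree. First I would write down the two maps explicitly. The map $\iota\colon \sigma_{\geq n}M^{\bullet}\to M^{\bullet}$ is defined degreewise by the identity on $M^i$ for $i\geq n$ and by the zero map $0\to M^i$ for $i<n$; the map $\pi\colon M^{\bullet}\to\sigma_{\leq n-1}M^{\bullet}$ is defined degreewise by the identity on $M^i$ for $i\leq n-1$ and by the zero map $M^i\to 0$ for $i\geq n$. I would check that both are morphisms of complexes: the only degrees where one needs to think are $i=n-1$ (for $\iota$) and $i=n$ (for $\pi$), and in each of those degrees one of the two composites to be compared factors through a zero object, so the relevant square commutes automatically; in all other degrees the two squares one must compare are literally identical.

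Next I would check exactness degreewise. Fix an integer $i$ and consider $0\to(\sigma_{\geq n}M^{\bullet})^i\to M^i\to(\sigma_{\leq n-1}M^{\bullet})^i\to 0$. If $i\geq n$ this is $0\to M^i\xrightarrow{\ \mathrm{id}\ }M^i\to 0\to 0$, which is exact; if $i\leq n-1$ it is $0\to 0\to M^i\xrightarrow{\ \mathrm{id}\ }M^i\to 0$, which is also exact. Hence the sequence is exact in every degree, and therefore exact in $\mathbf{C}(\mathcal{A})$, which is exactly the assertion of the lemma.

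There is no real obstacle here: the statement is a bookkeeping fact about the two brutal truncations, and the only point requiring a little care is the behaviour at the boundary degrees $n-1$ and $n$, where one must confirm that the differentials of $\sigma_{\geq n}M^{\bullet}$ and of $\sigma_{\leq n-1}M^{\bullet}$ are exactly the ones induced from (respectively, killed off in) $M^{\bullet}$, so that $\iota$ and $\pi$ are genuine chain maps. Once that is in place, the degreewise computation above completes the proof.
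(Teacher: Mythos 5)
Your proof is correct: the paper itself gives no argument for this lemma (it is quoted from the cited reference), and your degreewise verification --- checking that the inclusion and projection are chain maps, with the only nontrivial squares at the boundary commuting because one composite factors through a zero object, and then checking exactness in each degree --- is exactly the standard proof one would expect there. The only cosmetic remark is that for $\pi$ the square needing attention is the one between degrees $n-1$ and $n$ (where the target component is $0$), but your stated reason covers it, so nothing is missing.
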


\begin{lemma}[\cite{Zhang2015TriangulatedCategories}, Lemma 2.6.2]\label{shuangjieduan} Let $\mathcal{A}$ be an Abelian category. Let $M^{\bullet}$ be a chain complex. Then
\begin{equation}
colim_{n\geq 0} [\sigma_{\geq -n} (\tau_{\leq n+1}M^{\bullet})] = M^{\bullet}.
\end{equation}
\end{lemma}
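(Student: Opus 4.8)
\noindent\emph{Proof proposal.}\quad The plan is to identify the directed system $\{C_n\}_{n\ge 0}$, where $C_n := \sigma_{\geq -n}(\tau_{\leq n+1}M^{\bullet})$, explicitly in each degree, observe that it forms a chain of subcomplexes of $M^{\bullet}$, and check that it is eventually stationary in every fixed degree; the colimit then exists and equals $M^{\bullet}$ without any completeness hypothesis on $\mathcal{A}$.

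First I would unwind the two truncations. Since $\tau_{\leq n+1}M^{\bullet}$ is a subcomplex of $M^{\bullet}$ and the right brutal truncation $\sigma_{\geq -n}$ of a subcomplex is again a subcomplex, each $C_n$ is a subcomplex of $M^{\bullet}$; degreewise one gets $C_n^i = 0$ for $i<-n$, $C_n^i = M^i$ for $-n\le i\le n$, $C_n^{\,n+1}=\ker d^{n+1}$, and $C_n^i = 0$ for $i>n+1$, with differential equal to $d^i_M$ wherever both source and target are the corresponding $M^j$, and equal to the corestriction of $d^n_M$ in degree $n$. Then I would verify that the canonical maps assemble into inclusions $C_n\hookrightarrow C_{n+1}\hookrightarrow M^{\bullet}$: in degree $i$ the only places where $C_n^i$ and $C_{n+1}^i$ can differ are $i=-n-1$ (where $0\subset M^{-n-1}$), $i=n+1$ (where $\ker d^{n+1}\subset M^{n+1}$), and $i=n+2$ (where $0\subset\ker d^{n+2}$), all honest monomorphisms, so $\{C_n\}$ is an increasing chain of subcomplexes of $M^{\bullet}$.

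The key step is degreewise stationarity: for a fixed $i$ and every $n\ge |i|$ one has $-n\le i\le n$, hence $C_n^i=M^i$ and the transition $C_n^i\to C_{n+1}^i$ is $\mathrm{id}_{M^i}$; likewise, for $n$ large the degree-$i$ differential of $C_n$ is exactly $d^i_M$. A directed system in an arbitrary Abelian category whose transition morphisms are eventually identities has a colimit, namely the stationary value together with the evident cocone, and I would include the one-line universal-property check for this, so that no AB5 or cocompleteness assumption on $\mathcal{A}$ is needed; if $\mathcal{A}$ is Grothendieck, as in our applications, one may instead simply say that $M^{\bullet}$ is the union of the subcomplexes $C_n$. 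Since colimits of complexes are formed degreewise whenever the degreewise colimits exist, the cocone $\{C_n\hookrightarrow M^{\bullet}\}$ is the colimit, i.e.\ $colim_{n\ge 0} C_n = M^{\bullet}$.

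I do not expect a serious obstacle here: the content is just the explicit description of the two truncations together with the observation that each degree stabilizes once $n\ge|i|$. The only point requiring a little care is that the colimit is taken in a possibly non-cocomplete Abelian category, which is why the argument is phrased through ``eventually stationary'' systems rather than through exactness of filtered colimits; the rest is index bookkeeping.
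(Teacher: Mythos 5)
Your proof is correct. Note that the paper itself gives no argument for this lemma — it is quoted directly from \cite{Zhang2015TriangulatedCategories}, Lemma 2.6.2 — so there is nothing internal to compare against; your degreewise unwinding of $\sigma_{\geq -n}(\tau_{\leq n+1}M^{\bullet})$, the observation that the resulting chain of subcomplexes is stationary in each fixed degree once $n\geq |i|+1$, and the remark that such an eventually constant system has the stationary value as its colimit (so no cocompleteness of $\mathcal{A}$ is needed) is exactly the standard verification and is sound as written.
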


\subsection{Homotopy colimits}

Our main references for the homotopy colimits are \cite{Bokstedt1993HomotopyCategories}.
\begin{lemma}[\cite{Zhang2015TriangulatedCategories}, Proposition 1.3.4]
Let $\mathcal{D}$ be a triangulated category. Let $I$ be a set.
\begin{itemize}
\item (1) Let $\{ X_i, i\in I \}$ be a family of objects of $\mathcal{D}$. If $\bigoplus X_i$ exists, then $(\bigoplus X_i)[1]=\bigoplus X_i[1]$.
\item (2) Let $X_i\rightarrow Y_i\rightarrow Z_i\rightarrow X_i[1]$ be a family of distinguished triangles of $\mathcal{D}$. If $\bigoplus X_i,$ $\bigoplus Y_i,$ $\bigoplus Z_i$ exist, then
$\bigoplus X_i\rightarrow \bigoplus Y_i\rightarrow \bigoplus Z_i\rightarrow \bigoplus X_i[1]$ is a distinguished triangle.
\end{itemize}
\end{lemma}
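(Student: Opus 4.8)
\noindent The plan is to deduce both parts directly from the triangulated axioms, the only genuinely delicate point being the identification of the third vertex in part (2).

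For part (1), I would use that the shift $[1]$ is, by definition of a triangulated category, an additive autoequivalence of $\mathcal{D}$ with quasi-inverse $[-1]$. Any equivalence of categories preserves all colimits that exist in its source, and in particular it preserves coproducts; so if $\iota_i : X_i \to \bigoplus X_i$ are the structure maps of a coproduct, then $(\bigoplus X_i)[1]$ together with the maps $\iota_i[1] : X_i[1] \to (\bigoplus X_i)[1]$ satisfies the universal property of the coproduct of the family $\{X_i[1]\}_{i\in I}$. That is precisely the statement $(\bigoplus X_i)[1] = \bigoplus X_i[1]$.

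For part (2), I would first form the coproduct of the morphisms $X_i \to Y_i$, obtaining a morphism $\bigoplus X_i \to \bigoplus Y_i$, and complete it to a distinguished triangle
\begin{equation}
\bigoplus X_i \to \bigoplus Y_i \to Z \to \left(\bigoplus X_i\right)[1].
\end{equation}
For each $i$ the coproduct inclusions give a commutative square between the rows $X_i \to Y_i$ and $\bigoplus X_i \to \bigoplus Y_i$, so the completion-of-morphisms axiom (TR3) provides a morphism $g_i : Z_i \to Z$ extending it to a morphism of distinguished triangles, the last column being $X_i[1] \to (\bigoplus X_i)[1]$ (which uses part (1) to identify $\bigoplus X_i[1]$ with $(\bigoplus X_i)[1]$). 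The family $(g_i)$ then induces a morphism $g : \bigoplus Z_i \to Z$, and because the coproduct inclusions are jointly epimorphic one checks that the relevant squares commute, so that $g$ completes the coproduct inclusions to a morphism from the candidate triangle $\bigoplus X_i \to \bigoplus Y_i \to \bigoplus Z_i \to (\bigoplus X_i)[1]$ to the distinguished triangle above which is the identity on every term except the third.

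It then remains to show that $g$ is an isomorphism; once that is established the candidate triangle is isomorphic to a distinguished triangle, hence is itself distinguished. This is the step I expect to be the main obstacle, because one cannot apply $Hom(W,-)$ directly: that functor need not commute with infinite coproducts. Instead I would fix an arbitrary object $W \in \mathcal{D}$ and apply the contravariant cohomological functor $Hom(-,W)$. Since $Hom\left(\bigoplus(-),W\right) = \prod Hom(-,W)$ and a product of exact sequences of abelian groups is exact, applying $Hom(-,W)$ termwise to the candidate triangle yields the product over $i \in I$ of the long exact sequences attached to the distinguished triangles $X_i \to Y_i \to Z_i \to X_i[1]$, which is therefore exact. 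Comparing it, via the morphism of triangles, with the long exact sequence of the distinguished triangle above — where the induced maps on the $\bigoplus X_i$- and $\bigoplus Y_i$-terms are identities — the five lemma gives that $Hom(g,W)$ is an isomorphism for every $W$, whence $g$ is an isomorphism by the Yoneda lemma. Apart from this point, everything is formal manipulation with the axioms.
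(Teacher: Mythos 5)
The paper itself gives no proof of this lemma---it is quoted verbatim from \cite{Zhang2015TriangulatedCategories}, Proposition 1.3.4---so there is nothing internal to compare against; judged on its own, your argument is correct and is the standard proof of this fact (it is essentially the argument in Neeman's \emph{Triangulated Categories}, Proposition 1.2.1, and in B\"okstedt--Neeman). Part (1) is exactly right: $[1]$ is an additive autoequivalence, hence preserves whatever coproducts exist. In part (2) you handle the two genuinely delicate points correctly: the squares involving $g:\bigoplus Z_i \rightarrow Z$ commute because maps out of a coproduct are determined by their restrictions to the summands, and, since $Hom(W,-)$ need not preserve infinite coproducts, you instead apply the contravariant cohomological functor $Hom(-,W)$, observing that exactness of the resulting sequence for the \emph{candidate} triangle comes not from it being distinguished (which is what is to be proved) but from its being the product of the long exact sequences of the triangles $X_i \rightarrow Y_i \rightarrow Z_i \rightarrow X_i[1]$; the five lemma and Yoneda then make $g$ an isomorphism, and a triangle isomorphic to a distinguished one is distinguished. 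No gaps.
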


\begin{lemma}[\cite{Bokstedt1993HomotopyCategories}, Lemma 1.1]
Let $\mathcal{A}$ be a Grothendieck category. Write $\mathbf{K}(\mathcal{A})$ for the homotopy category of chain complexes in $\mathcal{A}$. Then $\mathbf{K}(\mathcal{A})$ has direct sums.
\end{lemma}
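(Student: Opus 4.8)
The plan is to construct direct sums at the level of chain complexes and then check that they descend to the homotopy category. First I would recall that a Grothendieck category $\mathcal{A}$ has arbitrary coproducts by definition, so the category of chain complexes $\mathbf{C}(\mathcal{A})$ also has arbitrary coproducts, formed degreewise: given a family $\{X_i^\bullet\}_{i\in I}$, set $\bigl(\bigoplus_i X_i^\bullet\bigr)^n = \bigoplus_i X_i^n$ with differential induced componentwise via the universal property of coproducts in $\mathcal{A}$. Together with the evident structure maps $\iota_j : X_j^\bullet \to \bigoplus_i X_i^\bullet$ this is a coproduct of the $X_i^\bullet$ in $\mathbf{C}(\mathcal{A})$; the only things to verify are that the $\iota_j$ and the maps assembled out of $\bigoplus_i X_i^\bullet$ are chain maps, which is immediate from the universal property.

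Next I would show that the image of this object under the canonical functor $Q : \mathbf{C}(\mathcal{A}) \to \mathbf{K}(\mathcal{A})$ is a coproduct in $\mathbf{K}(\mathcal{A})$, i.e. that for every complex $Y^\bullet$ the natural map
\[
\mathrm{Hom}_{\mathbf{K}(\mathcal{A})}\Bigl(\bigoplus_i X_i^\bullet,\, Y^\bullet\Bigr) \longrightarrow \prod_i \mathrm{Hom}_{\mathbf{K}(\mathcal{A})}(X_i^\bullet, Y^\bullet),\qquad [f]\mapsto \bigl([f\circ\iota_i]\bigr)_i,
\]
is a bijection. For surjectivity, a family of chain maps $f_i : X_i^\bullet \to Y^\bullet$ assembles degreewise, by the universal property of the coproduct, into a chain map $f : \bigoplus_i X_i^\bullet \to Y^\bullet$ with $f\circ\iota_i = f_i$. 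For injectivity, suppose $f : \bigoplus_i X_i^\bullet \to Y^\bullet$ is a chain map with each $f\circ\iota_i$ null-homotopic via maps $h_i^n : X_i^n \to Y^{n-1}$; these assemble into $h^n : \bigoplus_i X_i^n \to Y^{n-1}$, and since $dh+hd$ agrees with $f$ after precomposition with every $\iota_i$ it equals $f$, so $f$ is null-homotopic. Both occurrences of ``assemble degreewise'' are just the fact that $\mathrm{Hom}$ out of a coproduct is a product, applied first to chain maps and then to null-homotopies.

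There is no real obstacle here; the single point meriting attention is the bookkeeping that separates $\mathbf{C}(\mathcal{A})$ from $\mathbf{K}(\mathcal{A})$, together with the observation that it is precisely the additivity of the null-homotopy condition --- a map out of a coproduct is null-homotopic if and only if each of its restrictions is --- that lets the degreewise coproduct pass to $\mathbf{K}(\mathcal{A})$. I would also note that this argument uses only that $\mathcal{A}$ is additive with arbitrary coproducts; the full strength of the Grothendieck hypothesis is not needed for this lemma and is reserved for later use (enough injectives, exactness of filtered colimits, existence of homotopy colimits, and so on).
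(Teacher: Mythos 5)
Your proof is correct: the degreewise coproduct in $\mathbf{C}(\mathcal{A})$ descends to $\mathbf{K}(\mathcal{A})$ because both chain maps and null-homotopies out of a coproduct assemble componentwise, which is exactly the standard argument of the cited reference (the paper itself gives no proof, only the citation to B\"okstedt--Neeman, Lemma 1.1). Your closing observation that only additivity with arbitrary coproducts is needed, not the full Grothendieck hypothesis, is also accurate.
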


\begin{definition}
Let $\mathcal{D}$ be a triangulated category with direct sums. Suppose $\{ X_i, i\geq 0 \}$ is a sequence of objects in $\mathcal{D}$, together with maps $f_i: X_i \rightarrow X_{i+1}$. We say an object $K$ is a derived colimit, or a homotopy colimit of the system $\{ X_i, i\geq 0 \}$ if there is a distinguished triangle
\begin{equation}
\bigoplus X_i \rightarrow \bigoplus X_i\rightarrow K \rightarrow \bigoplus X_i[1]
\end{equation}
where the map $\bigoplus X_i \rightarrow \bigoplus X_i$ is given by $1-f_n$ in degree $n$. If this is the case, then we sometimes indicate this by the notation $K=hocolim(X_i)$.
\end{definition}

\begin{lemma}[\cite{Bokstedt1993HomotopyCategories}, Remark 2.2]\label{holim}
If $\mathcal{D} = \mathbf{D}(\mathcal{A})$, and $\mathcal{A}$ is a Grothendieck category, then
\begin{equation}
\mathbf{H}^i(hocolim(X_j)) = colim \mathbf{H}^i(X_j).
\end{equation}
\end{lemma}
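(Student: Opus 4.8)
The plan is to apply the cohomology long exact sequence to the triangle defining $K := hocolim(X_j)$ and check that it degenerates into short exact sequences. By definition there is a distinguished triangle
\begin{equation}
\bigoplus_{j\geq 0} X_j \xrightarrow{\;1-\sigma\;} \bigoplus_{j\geq 0} X_j \longrightarrow K \longrightarrow \left(\bigoplus_{j\geq 0} X_j\right)[1],
\end{equation}
where $\sigma$ restricts to $f_j$ on the $j$-th summand. The first input I need is that in $\mathbf{D}(\mathcal{A})$ the coproduct $\bigoplus_{j} X_j$ is represented by the termwise coproduct of complexes (it is the image under the localization functor of the coproduct in $\mathbf{K}(\mathcal{A})$, which exists by the cited Bökstedt lemma, and this is compatible because a coproduct of quasi-isomorphisms is a quasi-isomorphism), and that consequently $\mathbf{H}^i\bigl(\bigoplus_{j} X_j\bigr)=\bigoplus_{j} \mathbf{H}^i(X_j)$. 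This is precisely where the hypothesis that $\mathcal{A}$ is a Grothendieck category enters: coproducts in $\mathcal{A}$ are exact, so forming them commutes with passing to cohomology.

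The second input is the standard presentation of a sequential colimit: in any additive category with countable coproducts, $colim_{j} A_j$ is the coequalizer of the identity and $\sigma$ on $\bigoplus_{j} A_j$, i.e. $colim_{j} A_j=\mathrm{coker}(1-\sigma)$, and moreover $1-\sigma$ is a monomorphism. I would verify injectivity by peeling off summands from the bottom: if a subobject of $\bigoplus_{j} A_j$ is annihilated by $1-\sigma$ then, projecting to $A_0$ and using that $\sigma$ strictly raises degree, it has no $A_0$-component; projecting to $A_1$ it then has no $A_1$-component; and so on, so — invoking that in a Grothendieck category every subobject of $\bigoplus_{j} A_j$ is the union of its intersections with the finite partial sums — it is zero.

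With these in hand the proof is immediate. Applying $\mathbf{H}^{\bullet}$ to the triangle and substituting $\mathbf{H}^i\bigl(\bigoplus_{j} X_j\bigr)=\bigoplus_{j}\mathbf{H}^i(X_j)$ yields a long exact sequence in which the maps $1-\sigma$ recur with period three; since $1-\sigma$ is injective on each $\bigoplus_{j}\mathbf{H}^i(X_j)$, every connecting map $\mathbf{H}^i(K)\to\bigoplus_{j}\mathbf{H}^{i+1}(X_j)$ vanishes, and the sequence breaks into short exact sequences
\begin{equation}
0\longrightarrow \bigoplus_{j}\mathbf{H}^i(X_j)\xrightarrow{\;1-\sigma\;}\bigoplus_{j}\mathbf{H}^i(X_j)\longrightarrow \mathbf{H}^i(K)\longrightarrow 0.
\end{equation}
Hence $\mathbf{H}^i(K)\cong\mathrm{coker}(1-\sigma)=colim_{j}\mathbf{H}^i(X_j)$, and one checks the isomorphism is the canonical one induced by the structure maps $X_j\to K$. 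The only steps with genuine content are the two inputs above, both resting on the Grothendieck (AB5) hypothesis; the long exact sequence manipulation is purely formal, so that is where I would locate the substance of the lemma rather than in any single hard obstacle.
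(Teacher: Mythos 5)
Your proof is correct: the paper offers no argument of its own for this lemma, simply citing B\"okstedt--Neeman's Remark 2.2, and your argument (cohomology long exact sequence of the defining triangle, $\mathbf{H}^i(\bigoplus X_j)=\bigoplus\mathbf{H}^i(X_j)$ via AB4/AB5, injectivity of $1-\sigma$ forcing the sequence to split into short exact sequences with $\mathrm{coker}(1-\sigma)=colim\,\mathbf{H}^i(X_j)$) is exactly the standard proof underlying that citation. The two AB5-dependent inputs you isolate are indeed where the content lies, so nothing further is needed.
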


\subsection{Injective quasi-coherent sheaves}

Let $S$ be a scheme. Let $\mathfrak{X}$ be a locally Noetherian algebraic space over $S$. Let $\mathcal{F}$ be a quasi-coherent $\mathcal{O}_{\mathfrak{X}}$-sheaf. For each $x \in |\mathfrak{X}|$, denote the unique maximal ideal of $\mathcal{O}_{\mathfrak{X},\overline{x}}$ by $\mathfrak{m}_{\overline{x}}$, the residue field of $\overline{x}$ by $\kappa(\overline{x}) = \mathcal{O}_{\mathfrak{X},\overline{x}}/\mathfrak{m}_{\overline{x}}$, and an injective hull of $\kappa(\overline{x})$ in $Mod(\mathcal{O}_{\mathfrak{X},\overline{x}})$ by $E(\overline{x}) = E_{\mathcal{O}_{\mathfrak{X},\overline{x}}}(\kappa(\overline{x}))$. Let $j_{\overline{x}} : Spec(\mathcal{O}_{\mathfrak{X},\overline{x}}) \rightarrow \mathfrak{X}$ be the canonical morphism.
We state that every injective quasi-coherent sheaf is a direct sum of indecomposable injective quasi-coherent sheaves of this form.
\begin{theorem}
\label{neishefenjie}
Let $\mathfrak{X}$ be a locally Noetherian algebraic space over $S$.
\begin{itemize}
 \item (1) For every family $\{ \mathcal{J}_{\lambda} \}_{\lambda \in \Lambda}$ of injective quasi-coherent sheaves, the direct sum $\bigoplus_{\lambda \in \Lambda} \mathcal{J}_{\lambda}$ is also injective.
\item (2) Every injective quasi-coherent sheaf has an indecomposable decomposition.
\item (3) For each $x \in |\mathfrak{X}|$ , let $j_{\overline{x}} : Spec(\mathcal{O}_{\mathfrak{X},\overline{x}}) \rightarrow \mathfrak{X}$ be the canonical morphism. There is a bijection
\begin{equation}
|\mathfrak{X}| \rightarrow \frac{\{ \ indecomposable\  injective\  quasi-coherent\  sheaves \}}{\cong}
\end{equation}
given by
\begin{equation}
x \mapsto j_{\overline{x}*}E(\overline{x}).
\end{equation}
\end{itemize}
\end{theorem}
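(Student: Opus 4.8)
\emph{Sketch of the approach.}\hspace{0.2em}
The idea is to deduce parts (1) and (2) from the abstract theory of locally Noetherian Grothendieck categories, and to obtain part (3) by descending the classical structure theory of injective quasi-coherent sheaves on Noetherian schemes (Matlis, Hartshorne) along an \'etale presentation of $\mathfrak{X}$.

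First one checks that $QCoh(\mathfrak{X})$ is a \emph{locally Noetherian} Grothendieck category: it is a Grothendieck category by the general theory of quasi-coherent modules on algebraic spaces, and, since $\mathfrak{X}$ is locally Noetherian, every quasi-coherent sheaf is the filtered union of its coherent subsheaves, so $Coh(\mathfrak{X})$ is a generating family of Noetherian objects. Granting this, part (1) is exactly the Bass--Papp--Gabriel characterisation of local Noetherianity by stability of injectivity under arbitrary coproducts, and part (2) is the Gabriel--Matlis decomposition theorem: in a locally Noetherian Grothendieck category every injective object is a coproduct of indecomposable injectives, each the injective hull of a uniform coherent subobject and with local endomorphism ring (see \cite{Gabriel1962DesAbeliennes}). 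Part (2) also shows that an indecomposable injective is precisely an injective uniform object, so that every nonzero subobject of it is essential; this will be used repeatedly.

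For part (3) I would choose an \'etale surjection $p\colon U\to\mathfrak{X}$ with $U$ a disjoint union of Noetherian affine schemes, and reduce to the classical case on $U$. Here $p^{*}$ is exact (and one checks $p^{*}$ carries injective quasi-coherent sheaves to injective ones), and for $u\in U$ lying over $x\in|\mathfrak{X}|$ with a compatible geometric point $\overline{x}$ one has $\mathcal{O}_{\mathfrak{X},\overline{x}}=(\mathcal{O}_{U,u})^{\mathrm{sh}}$, the map $\mathcal{O}_{U,u}\to\mathcal{O}_{\mathfrak{X},\overline{x}}$ being a faithfully flat filtered colimit of \'etale ring maps. The ingredients are then: (a) \emph{each $j_{\overline{x}*}E(\overline{x})$ is injective in $QCoh(\mathfrak{X})$} --- $j_{\overline{x}}$ is a cofiltered limit of \'etale morphisms, hence flat, so $j_{\overline{x}}^{*}$ is exact, and $j_{\overline{x}*}$, which moreover preserves quasi-coherence (checked on $U$, where it becomes a pushforward along a cofiltered limit of affine morphisms), carries injective $\mathcal{O}_{\mathfrak{X},\overline{x}}$-modules to injective objects of $QCoh(\mathfrak{X})$; (b) \emph{the support computation $\operatorname{supp}(j_{\overline{x}*}E(\overline{x}))=\overline{\{x\}}$} --- since $E(\overline{x})$ is $\mathfrak{m}_{\overline{x}}$-power torsion (Matlis) and, via the classical equality $\operatorname{supp}E_{A}(A/\mathfrak{q})=V(\mathfrak{q})$ over a Noetherian ring $A$, checked on $U$; as $|\mathfrak{X}|$ is sober (the space being quasi-separated), $x$ is then recovered from $j_{\overline{x}*}E(\overline{x})$ as the generic point of its support, so the assignment $x\mapsto j_{\overline{x}*}E(\overline{x})$ is injective on isomorphism classes; (c) \emph{indecomposability and surjectivity} --- for an arbitrary injective $\mathcal{E}$ one decomposes $p^{*}\mathcal{E}$ using the scheme case on $U$ into a coproduct of objects $j_{u*}E(u)$, and descends: the canonical descent datum on $p^{*}\mathcal{E}$ has transition isomorphisms permuting the summands lying over a fixed point of $|\mathfrak{X}|$, and one identifies the descended pieces with the $j_{\overline{x}*}E(\overline{x})$ by comparing $E_{\mathcal{O}_{U,u}}(\kappa(u))$ with $E_{\mathcal{O}_{\mathfrak{X},\overline{x}}}(\kappa(\overline{x}))$ along $\mathcal{O}_{U,u}\to\mathcal{O}_{\mathfrak{X},\overline{x}}$; combined with the uniformity from part (2) this yields that the $j_{\overline{x}*}E(\overline{x})$ are indecomposable and exhaust every indecomposable injective, giving the bijection.

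The step I expect to be the main obstacle is precisely the comparison in (c), i.e.\ the control of $j_{\overline{x}*}$ and of the passage through the strictly henselian local ring. Unlike the inclusion of the local ring of a point of a \emph{scheme}, the morphism $j_{\overline{x}}\colon\operatorname{Spec}(\mathcal{O}_{\mathfrak{X},\overline{x}})\to\mathfrak{X}$ is built from the \emph{strict} henselisation and in general is neither an immersion nor a monomorphism, so $j_{\overline{x}*}$ need not be fully faithful; consequently the assertions that $j_{\overline{x}*}E(\overline{x})$ is indecomposable and that the $\operatorname{Spec}(\mathcal{O}_{U,u})$-description glues correctly to the $\operatorname{Spec}(\mathcal{O}_{\mathfrak{X},\overline{x}})$-description are not formal. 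They rest on a faithfully flat descent analysis of injective hulls along the ind-\'etale extensions $\mathcal{O}_{U,u}\to\mathcal{O}_{\mathfrak{X},\overline{x}}$ (where Matlis theory over the common completion is available), together with a sufficiently fine theory of associated points for coherent sheaves on locally Noetherian algebraic spaces to run the uniqueness-of-associated-point argument underlying the injectivity of the correspondence. Once this descent is in place, the remainder --- exactness and adjunctions, the support computation, and the bookkeeping with the decomposition of part (2) --- is either formal or a direct reduction to the classical Noetherian-affine theory.
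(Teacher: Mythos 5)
Your handling of (1) and (2) is correct and is, in substance, exactly what lies behind the paper's citation of Kanda: once one checks that $Coh(\mathfrak{X})$ is a generating family of Noetherian objects of the Grothendieck category $QCoh(\mathfrak{X})$, both statements are the standard Gabriel--Matlis theory of locally Noetherian Grothendieck categories, and your items (a) (flatness of $j_{\overline{x}}$, so $j_{\overline{x}*}$ preserves injectivity) and (b) (the support computation) are fine.

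The genuine gap is in part (3), and it sits exactly where you yourself locate ``the main obstacle'': the comparison of $E_{\mathcal{O}_{U,u}}(\kappa(u))$ with $E_{\mathcal{O}_{\mathfrak{X},\overline{x}}}(\kappa(\overline{x}))$ along $\mathcal{O}_{U,u}\rightarrow\mathcal{O}_{\mathfrak{X},\overline{x}}$ is never carried out, and it is the entire content of (3) --- everything else in your step (c) is bookkeeping. Worse, the deferred ``faithfully flat descent analysis'' does not deliver what step (c) needs. The map $\mathcal{O}_{U,u}\rightarrow\mathcal{O}_{\mathfrak{X},\overline{x}}$ is the strict henselization, so it is faithfully flat and ind-\'etale with $\mathfrak{m}_u\mathcal{O}_{\mathfrak{X},\overline{x}}=\mathfrak{m}_{\overline{x}}$ and residue extension $\kappa(u)\rightarrow\kappa(\overline{x})=\kappa(u)^{sep}$; restriction of scalars along it preserves injectivity, but the $\mathcal{O}_{U,u}$-socle of $E(\overline{x})$ is all of $\kappa(\overline{x})$, so $E(\overline{x})$ restricts to a direct sum of copies of $E_{\mathcal{O}_{U,u}}(\kappa(u))$ indexed by a $\kappa(u)$-basis of $\kappa(\overline{x})$, not to a single copy. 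Consequently $j_{\overline{x}*}E(\overline{x})$ is in general decomposable: already for $\mathfrak{X}=U=Spec(\mathbb{R})$ one has $\mathcal{O}_{\mathfrak{X},\overline{x}}=\mathbb{C}$, $E(\overline{x})=\mathbb{C}$, and $j_{\overline{x}*}E(\overline{x})$ is $\mathbb{C}$ viewed as an $\mathbb{R}$-vector space, i.e. $\mathbb{R}\oplus\mathbb{R}$, whereas the unique indecomposable injective of $QCoh(Spec(\mathbb{R}))$ is $\mathbb{R}$. (This is also precisely the point at which the paper's own proof asserts $j_U^{*}j_{\overline{x}*}E(\overline{x})=j_{u*}E(u)$.) So the indecomposability and exhaustion you plan to extract in (c) cannot be obtained by the descent you describe whenever the residue field at the point is not separably closed: the residual Galois action is the obstruction, and no reduction to the affine case on $U$ removes it. A complete argument along these lines must change the object attached to $x$ --- e.g. take an indecomposable direct summand of $j_{\overline{x}*}E(\overline{x})$, equivalently the injective hull in $QCoh(\mathfrak{X})$ of a quasi-coherent sheaf with residue field $\kappa(x)$ at $x$ (henselian rather than strictly henselian local data) --- or impose separably closed residue fields; as written, the decisive step of your proposal is both missing and, in the form you intend to prove it, not true.
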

\begin{proof}
(1) See \cite{Kanda2012ClassifyingSpectrum}, Theorem 5.9.

(2) See \cite{Kanda2012ClassifyingSpectrum}, Theorem 5.9.

(3) Choose a scheme $U$ with a point $u$ and an $\acute{e}$tale morphism $j_U: U\rightarrow X$ mapping $u$ to $x$, then we have $j_U^{*}j_{\overline{x}*}E(\overline{x}) = j_{u*}E(u)$ is an indecomposable injective quasi-coherent $\mathcal{O}_U$-sheaf by \cite{Hartshorne1966ResiduesDuality} Theorem 7.11. Hence $j_{\overline{x}*}E(\overline{x})$ is an indecomposable injective quasi-coherent $\mathcal{O}_{\mathfrak{X}}$-sheaf.

Now for an indecomposable injective quasi-coherent $\mathcal{O}_{\mathfrak{X}}$-sheaf $\mathcal{J}$, we have $j_U^{*}\mathcal{J}$ is an indecomposable injective quasi-coherent $\mathcal{O}_{\mathfrak{X}}$-sheaf, hence $j_U^{*}\mathcal{J} = j_{u*}E(u)$ for some $u\in U$, we have $j_{\overline{x}*}E(\overline{x}) \subset \mathcal{J}.$ Since $\mathcal{J}$ is an indecomposable injective quasi-coherent $\mathcal{O}_{\mathfrak{X}}$-sheaf, we have $\mathcal{J}= j_{\overline{x}*}E(\overline{x}).$
\end{proof}

Let $\mathcal{A}$ be an Abelian category. A complex $J^{\bullet}$ is $K$-injective if for every acyclic complex $M^{\bullet}$ we have $Hom_{\mathbf{K}(\mathcal{A})}(M^{\bullet}, J^{\bullet})=0$.
An important property of $K$-injective objects is that
\begin{equation}
Hom_{\mathbf{K}(\mathcal{A})}(M^{\bullet}, J^{\bullet})=Hom_{\mathbf{D}(\mathcal{A})}(M^{\bullet}, J^{\bullet}),
\end{equation}
for every $M^{\bullet}\in \mathbf{K}(\mathcal{A})$. A $K$-injective complex $J^{\bullet}$ together with a quasi-isomorphism $M^{\bullet} \rightarrow J^{\bullet}$ is called a minimal $K$-injective resolution of $M^{\bullet}$, if for all $i$ the kernel of the differential $J^i \rightarrow J^{i+1}$ is an essential subobject of $J^i$.

\begin{lemma}[\cite{Tarrio2000LocalizationResolutions}, Theorem 5.4; \cite{Krause2005TheScheme}, Proposition B.2]
\label{daochudengjia}
Let $\mathfrak{X}$ be a Noetherian separated algebraic space over $S$, and let $QCoh(\mathfrak{X})$ be the category of quasi-coherent sheaves on $\mathfrak{X}$. In the category $\mathbf{C}(QCoh(\mathfrak{X}))$ of quasi-coherent $\mathcal{O}_{\mathfrak{X}}$-complexes, every object has a minimal $K$-injective resolution.
\end{lemma}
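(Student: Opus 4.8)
The plan is to reduce the statement to standard facts about locally Noetherian Grothendieck categories, using only that $\mathbf{A} := QCoh(\mathfrak{X})$ is such a category together with results already available in the excerpt. First I would record that $\mathbf{A}$ is a Grothendieck category — it is abelian and cocomplete with exact filtered colimits, and since $\mathfrak{X}$ is quasi-compact and quasi-separated it admits a generator — and that it is moreover \emph{locally Noetherian}: since $\mathfrak{X}$ is locally Noetherian, the coherent sheaves are Noetherian objects, every quasi-coherent sheaf is the filtered union of its coherent subsheaves, and the isomorphism classes of coherent sheaves form a set, so $\mathbf{A}$ has a generating set of Noetherian objects. Consequently $\mathbf{A}$ has enough injectives, every object has an injective envelope, and, by Theorem~\ref{neishefenjie}(1), arbitrary direct sums of injective objects are injective. (Separatedness, a standing hypothesis of the paper, plays no role in this lemma.)

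Next I would invoke the existence of $K$-injective resolutions in a Grothendieck category: every $M^\bullet \in \mathbf{C}(\mathbf{A})$ admits a quasi-isomorphism $M^\bullet \to I^\bullet$ with $I^\bullet$ a $K$-injective complex whose terms $I^n$ are all injective (see \cite{Tarrio2000LocalizationResolutions}, Theorem~5.4; the usual construction already produces injective components). It then remains to replace $I^\bullet$ by a minimal one, and this is the real content.

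The key step is the decomposition $I^\bullet \cong \widehat{I}^\bullet \oplus T^\bullet$ in $\mathbf{C}(\mathbf{A})$, with $T^\bullet$ a direct sum of contractible complexes of the form $(\cdots\,0 \to E \xrightarrow{\mathrm{id}} E \to 0\,\cdots)$ with $E$ injective, and $\widehat{I}^\bullet$ \emph{minimal}, i.e. $\ker(d^n_{\widehat{I}}) \subset \widehat{I}^n$ essential for every $n$. For the local step: whenever $\ker(d^n) \subset I^n$ is not essential, I would pick a nonzero $S \subset I^n$ with $S \cap \ker(d^n) = 0$ and set $E := E(S) \subset I^n$; then $E \cap \ker(d^n) = 0$, so $d^n|_E$ is a monomorphism with injective image $d^n(E) \subset I^{n+1}$, and a short diagram chase — using the injectivity of $E$ and of $d^n(E)$ to choose complements of $E$ in $I^n$ and of $d^n(E)$ in $I^{n+1}$ compatibly with the differentials — exhibits $(\cdots\,0 \to E \xrightarrow{\sim} d^n(E) \to 0\,\cdots)$ as a direct summand of the complex $I^\bullet$. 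To globalize, I would take a maximal contractible direct summand $T^\bullet$ of $I^\bullet$ via Zorn's lemma and let $\widehat{I}^\bullet$ be a complement; $\widehat{I}^\bullet$ then has injective terms, and if some $\ker(d^n_{\widehat{I}})$ were not essential the local step would enlarge $T^\bullet$, so $\widehat{I}^\bullet$ is minimal. The hard part — and where local Noetherianity of $\mathbf{A}$ is used essentially — is checking that the directed union of a chain of contractible direct summands is again a contractible direct summand: degreewise it is a directed union of injective objects along split monomorphisms, hence a direct sum of injectives, hence injective by Theorem~\ref{neishefenjie}(1) and a direct summand of the ambient term; a contractible subcomplex with injective terms is a direct summand as a complex (its defining extension class lives in $\operatorname{Hom}_{\mathbf{K}(\mathbf{A})}(-, T^\bullet[1]) = 0$); and one verifies the complement still has the claimed shape. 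One can equally organize this via the indecomposable decompositions of Theorem~\ref{neishefenjie}(2), in the style of \cite{Krause2005TheScheme}, Proposition~B.2.

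Finally I would assemble the pieces: $T^\bullet$ is contractible, so acyclic and $K$-injective; $\widehat{I}^\bullet$, a direct summand of the $K$-injective complex $I^\bullet$, is $K$-injective; and the composite $M^\bullet \to I^\bullet \twoheadrightarrow \widehat{I}^\bullet$ is a quasi-isomorphism because $T^\bullet$ is acyclic. Since $\widehat{I}^\bullet$ has essential kernels, $M^\bullet \to \widehat{I}^\bullet$ is the desired minimal $K$-injective resolution.
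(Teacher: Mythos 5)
The paper offers no proof of Lemma \ref{daochudengjia} at all: it is imported wholesale from \cite{Tarrio2000LocalizationResolutions} (Theorem 5.4) and \cite{Krause2005TheScheme} (Proposition B.2), so there is no internal argument to compare yours against. Your proposal is correct and is essentially a self-contained rendering of exactly those two sources: existence of $K$-injective resolutions with injective components in the Grothendieck category $QCoh(\mathfrak{X})$, followed by Krause's decomposition of a complex of injectives over a locally Noetherian category into a minimal summand plus a contractible one. Two compressed steps deserve a word. First, ``a directed union of injectives along split monomorphisms is a direct sum of injectives'' is not literally automatic; either choose complements transfinitely along a well-ordered cofinal subchain, or simply invoke that in a locally Noetherian Grothendieck category filtered colimits of injective objects are injective, which is the fact underlying Theorem \ref{neishefenjie}(1). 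Second, contractibility of the union of a chain of contractible summands is cleanest via the criterion that a complex with injective components is contractible if and only if it is acyclic with injective kernels: acyclicity and the formation of kernels pass to the filtered colimit by exactness of filtered colimits, and injectivity of the colimit kernels follows as above (note also that each $T^\bullet_\alpha$ is a direct summand of $T^\bullet_\beta$ for $\alpha\leq\beta$, since a retraction of $I^\bullet$ onto $T^\bullet_\alpha$ restricts to one on $T^\bullet_\beta$). With these points made explicit, $M^\bullet \to \widehat{I}^\bullet$ is a quasi-isomorphism onto a $K$-injective complex all of whose kernels are essential, which is precisely the paper's definition of a minimal $K$-injective resolution; and you are right that separatedness plays no role here beyond guaranteeing the standing hypotheses on $QCoh(\mathfrak{X})$.
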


\section{Local cohomology}\label{Local cohomology}

Let $S$ be a scheme contained in $\mathbf{Sch}_{fppf}$. Let $\mathfrak{X}$ be a Noetherian separated algebraic space over $S$. Let $\mathcal{F}$ be a quasi-coherent sheaf on $\mathfrak{X}$. For each $x \in |\mathfrak{X}|$, denote the unique maximal ideal of $\mathcal{O}_{\mathfrak{X},\overline{x}}$ by $\mathfrak{m}_{\overline{x}}$, the residue field of $\overline{x}$ by $\kappa(\overline{x}) = \mathcal{O}_{\mathfrak{X},\overline{x}}/\mathfrak{m}_{\overline{x}}$, and an injective hull of $\kappa(\overline{x})$ in $Mod(\mathcal{O}_{\mathfrak{X},\overline{x}})$ by $E(\overline{x}) = E_{\mathcal{O}_{\mathfrak{X},\overline{x}}}(\kappa(\overline{x}))$. 
\begin{definition}
We say $x \in |\mathfrak{X}|$ is associated to $\mathcal{F}$ if the maximal ideal $\mathfrak{m}_{\overline{x}}$ is associated to the $\mathcal{O}_{\mathfrak{X},\overline{x}}$-module $\mathcal{F}_{\overline{x}}$. We denote $Ass_{\mathfrak{X}}(\mathcal{F})$ the set of associated points of $\mathcal{F}$.
\end{definition}

Let $\mathcal{F} \in  QCoh(\mathfrak{X})$. Following \cite{Foxby1979BoundedModules}, the small support of $\mathcal{F}$ is by definition 
\begin{equation}
supp(\mathcal{F}) = \{x \in X | Tor^{\mathcal{O}_{\mathfrak{X},\overline{x}}}_{*} (\mathcal{F}_{\overline{x}},\kappa(\overline{x})) \not= 0 \}.
\end{equation}

The (usual) support of $\mathcal{F}$ is by definition 
\begin{equation}
Supp(\mathcal{F}) = \{x \in X | \mathcal{F}_{\overline{x}} \not= 0 \} .
\end{equation}

Note that $supp(\mathcal{F}) \subset Supp(\mathcal{F})$ and equality holds if $\mathcal{F} \in  Coh(\mathfrak{X})$ (See \cite{Foxby1979BoundedModules}, Lemma 2.6).

\begin{proposition}\label{ass}
Let $\mathfrak{X}$ be a Noetherian separated algebraic space over $S$ and let $x \in |\mathfrak{X}|$, then $Ass_{\mathfrak{X}}(j_{\overline{x}*}\kappa(\overline{x}))= \{x \},$ $Ass_{\mathfrak{X}}(j_{\overline{x}*}E(\overline{x}))= \{x \}.$
\end{proposition}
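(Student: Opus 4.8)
The plan is to reduce the computation of $Ass_{\mathfrak{X}}$ to two facts about the Noetherian local ring $R := \mathcal{O}_{\mathfrak{X},\overline{x}}$ (it is Noetherian, being the strict henselization of a Noetherian local ring since $\mathfrak{X}$ is locally Noetherian): by Matlis theory, $Ass_R(\kappa(\overline{x})) = Ass_R(E(\overline{x})) = \{\mathfrak{m}_{\overline{x}}\}$; and both $\kappa(\overline{x})$ and $E(\overline{x})$ are $\mathfrak{m}_{\overline{x}}$-power torsion, i.e.\ every element is annihilated by a power of $\mathfrak{m}_{\overline{x}}$ (for $E(\overline{x})$ because $E(\overline{x}) = \bigcup_n (0 :_{E(\overline{x})}\mathfrak{m}_{\overline{x}}^{n})$). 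Write $M$ for $\kappa(\overline{x})$ or for $E(\overline{x})$; only these two properties will be used, so both cases are handled at once. Moreover, since $\kappa(\overline{x})\hookrightarrow E(\overline{x})$ is essential and $j_{\overline{x}*}$ is exact, once $Ass_{\mathfrak{X}}(j_{\overline{x}*}E(\overline{x})) = \{x\}$ is known the inclusion $Ass_{\mathfrak{X}}(j_{\overline{x}*}\kappa(\overline{x}))\subseteq\{x\}$ is automatic.

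The key step is to describe the geometric stalks of $j_{\overline{x}*}M$. Fix $y\in|\mathfrak{X}|$ with a geometric point $\overline{y}$ and put $R_y := \mathcal{O}_{\mathfrak{X},\overline{y}}$. Since $\mathfrak{X}$ is separated, every morphism to it from a scheme is representable, so $Z := Spec(R)\times_{\mathfrak{X}} Spec(R_y)$ is a (locally Noetherian) scheme. Writing $Z$ as the cofiltered limit of the schemes $Spec(R)\times_{\mathfrak{X}} V$ over affine \'etale neighbourhoods $(V,\overline{v})$ of $\overline{y}$, and using that $\widetilde{M}$ is supported at the closed point of $Spec(R)$ — so that its pullback to each $Spec(R)\times_{\mathfrak{X}} V$ is supported on the fibre over $\overline{x}$, which is finite, reduced and $\kappa(\overline{x})$-rational because $\kappa(\overline{x})$ is separably closed — one computes $(j_{\overline{x}*}M)_{\overline{y}} \cong \bigoplus_{p\in I_y} M_p$, where $I_y$ is the set of $\mathfrak{X}$-morphisms $Spec(\kappa(\overline{x}))\to Spec(R_y)$, each giving a $\kappa(\overline{x})$-point $p$ of $Z$ whose connected component in $Z$ is $Spec(R)$ (as $R$ is strictly henselian), and $M_p$ is $M$ with the $R_y$-module structure obtained by restriction along the induced local homomorphism $\theta_p : R_y\to R$. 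One checks that $I_y\ne\emptyset$ iff $x$ is a generization of $y$, that $\theta_p^{-1}(\mathfrak{m}_{\overline{x}}) = \mathfrak{m}_{\overline{y}}$ for the diagonal $p$ when $y = x$, and that $\theta_p^{-1}(\mathfrak{m}_{\overline{x}})$ is a strictly smaller prime of $R_y$ whenever $y\ne x$ — because its image in $|\mathfrak{X}|$ is $x$, whereas $\mathfrak{m}_{\overline{y}}$ corresponds to $y$. (Equivalently one may reduce to the case of schemes through a surjective \'etale chart $j_U : U\to\mathfrak{X}$, using that $Ass$ commutes with \'etale pullback together with the identity $j_U^{*}(j_{\overline{x}*}E(\overline{x})) = j_{u*}E(u)$ from the proof of Theorem \ref{neishefenjie}.)

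Granting the stalk description, the rest is the classical argument. For $y = x$ with a suitable $\overline{y}$ the diagonal summand of $(j_{\overline{x}*}M)_{\overline{x}}$ is $M$ with its usual $R$-structure, so $\mathfrak{m}_{\overline{x}}\in Ass_R((j_{\overline{x}*}M)_{\overline{x}})$, i.e.\ $x\in Ass_{\mathfrak{X}}(j_{\overline{x}*}M)$. If $x$ is not a generization of $y$ then $(j_{\overline{x}*}M)_{\overline{y}} = 0$, so $y$ is not associated. Finally, if $y$ is a proper specialization of $x$, then every nonzero $n\in(j_{\overline{x}*}M)_{\overline{y}} = \bigoplus_{p\in I_y} M_p$ has a nonzero component $m_p$, and $Ann_{R_y}(n)\subseteq Ann_{R_y}(m_p) = \theta_p^{-1}(Ann_R(m_p))\subseteq\theta_p^{-1}(\mathfrak{m}_{\overline{x}})\subsetneq\mathfrak{m}_{\overline{y}}$ (the middle inclusion since $M$ being $\mathfrak{m}_{\overline{x}}$-power torsion forces $Ann_R(m_p)\subseteq\mathfrak{m}_{\overline{x}}$), hence $Ann_{R_y}(n)\ne\mathfrak{m}_{\overline{y}}$ and $y\notin Ass_{\mathfrak{X}}(j_{\overline{x}*}M)$. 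This yields $Ass_{\mathfrak{X}}(j_{\overline{x}*}M) = \{x\}$ for both $M = \kappa(\overline{x})$ and $M = E(\overline{x})$.

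I expect the main obstacle to be the stalk description of the second paragraph: pinning down $(j_{\overline{x}*}M)_{\overline{y}}$ and, above all, its $\mathcal{O}_{\mathfrak{X},\overline{y}}$-module structure, which requires a careful study of $Spec(R)\times_{\mathfrak{X}}Spec(R_y)$ using the strict-henselian property of $R$, the separably closed residue field $\kappa(\overline{x})$, and separatedness of $\mathfrak{X}$ (so that this fibre product is a scheme and global sections of the relevant quasi-coherent sheaves commute with the limit over \'etale neighbourhoods). Once that is in place, everything else is elementary localization, and the passage from $E(\overline{x})$ to $\kappa(\overline{x})$ is either the same argument verbatim or the one-line deduction from essentiality noted above.
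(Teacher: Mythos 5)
Your overall strategy --- verifying the definition of $Ass_{\mathfrak{X}}$ stalkwise, using only that $M=\kappa(\overline{x})$ or $E(\overline{x})$ is $\mathfrak{m}_{\overline{x}}$-power torsion with $Ass_R(M)=\{\mathfrak{m}_{\overline{x}}\}$ --- is sound, and it is more thorough than the printed proof, which only identifies the stalk at $\overline{x}$ and quotes the affine statement. But the structural claim carrying your second and third paragraphs is false as stated. The closed fibre $Spec(\kappa(\overline{x}))\times_{\mathfrak{X}}Spec(R_y)$ is indeed reduced with every point $\kappa(\overline{x})$-rational, but it is \emph{not} finite, and $(j_{\overline{x}*}M)_{\overline{y}}$ is \emph{not} $\bigoplus_{p\in I_y}M_p$. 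Already for $\mathfrak{X}=Spec(\mathbb{Q})$ and $y=x$ its unique point one has $R=R_y=\overline{\mathbb{Q}}$, the scheme $Z=Spec(\overline{\mathbb{Q}}\otimes_{\mathbb{Q}}\overline{\mathbb{Q}})$ has underlying space the infinite profinite set $Gal(\overline{\mathbb{Q}}/\mathbb{Q})$, and the stalk $\overline{\mathbb{Q}}\otimes_{\mathbb{Q}}\overline{\mathbb{Q}}$ is the ring of locally constant $\overline{\mathbb{Q}}$-valued functions on it, which strictly contains $\bigoplus_{\sigma}\overline{\mathbb{Q}}$ (the element $1\otimes 1$ is not finitely supported); this Galois phenomenon occurs whenever the relevant residue fields are not separably closed, i.e.\ in essentially every case. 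Each point of $Z$ does lie in a connected component isomorphic to $Spec(R)$, but that component is closed and in general not open, so there is no projection onto an ``$M_p$-summand'': the diagonal summand you use for $y=x$, and the ``nonzero component $m_p$'' you pick for a proper specialization, are not available as written. (For the same reason the parenthetical shortcut via $j_U^{*}(j_{\overline{x}*}E(\overline{x}))=j_{u*}E(u)$ should not be taken for granted.)

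The gap is repairable because the decomposition you want does hold at each finite level, and you only ever need annihilators of single elements. For an affine \'etale neighbourhood $V$ of $\overline{y}$, the scheme $W=Spec(R)\times_{\mathfrak{X}}V$ is Noetherian, \'etale and separated over the strictly henselian $Spec(R)$, so it has finitely many points over $\mathfrak{m}_{\overline{x}}$, each contained in a clopen copy of $Spec(R)$; hence sections over $W$ of the pullback of $\widetilde{M}$ form a finite direct sum of twisted copies of $M$, and $(j_{\overline{x}*}M)_{\overline{y}}$ is the filtered colimit of these, i.e.\ the locally constant sections over the profinite closed fibre of $Z$, equipped for each point $p$ of that fibre with an $R_y$-linear evaluation onto $M_{\theta_p}$, where $\theta_p:R_y\to R$ is local and lies over $\mathfrak{X}$, and a class vanishes iff all its evaluations do (a short inverse-limit argument with the finite level point sets). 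With this substitute your annihilator computations go through: for $y=x$, the nonzero submodule $(j_{\overline{x}*}\kappa(\overline{x}))_{\overline{x}}\subset (j_{\overline{x}*}E(\overline{x}))_{\overline{x}}$ is annihilated by $\mathfrak{m}_{\overline{x}}$ (because $\mathfrak{m}_{\overline{x}}$ is generated by the maximal ideal coming from the base), so any nonzero element of it has annihilator exactly $\mathfrak{m}_{\overline{x}}$; for a proper specialization $y$, the annihilator of a nonzero class is contained in $\theta_p^{-1}(\mathfrak{m}_{\overline{x}})$ for some $p$ with nonzero evaluation, a prime of $R_y$ lying over $x\neq y$ and hence distinct from $\mathfrak{m}_{\overline{y}}$. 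Once the finiteness and direct-sum claims are replaced by this colimit description, your proof is complete, and unlike the paper's short argument it also checks explicitly that no point other than $x$ is associated.
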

\begin{proof}
Let $R$ be a Noetherian ring and let $\mathfrak{p} \in Spec(R)$, then $Ass(R/\mathfrak{p})= \{\mathfrak{p} \}$ and $Ass(E(R/\mathfrak{p}))= \{\mathfrak{p} \}$ (See \cite{Bourbaki1989CommutativeAlgebra} Proposition 4.1.1). Therefore $(j_{\overline{x}*}E(\overline{x}))_{\overline{x}}=j_{\overline{x}}^{-1}j_{\overline{x}*}E(\overline{x})=E(\overline{x})=E_{\mathcal{O}_{\mathfrak{X},\overline{x}}}(\mathcal{O}_{\mathfrak{X},\overline{x}}/\mathfrak{m}_{\overline{x}})$.  Hence $Ass_{\mathfrak{X}}(j_{\overline{x}*}E(\overline{x}))= \{x \}.$ 
\end{proof}

\begin{definition}
For any subset $V \subset |\mathfrak{X}|$ we say that $V$ is a specialization-closed subset if for any $x \in V$ and any $y \in |\mathfrak{X}|$ we have $y\in V$ whenever $y\in \overline{\{x \}}$.
\end{definition}
\begin{definition}
Let $V$ be a specialization-closed subset of $|\mathfrak{X}|$. We can define the section functor
$\Gamma_{V}$ with support in $V$ as 
\begin{equation}\nonumber
\Gamma_V (\mathcal{F}) = \bigcup \{ \mathcal{G} \subset \mathcal{F} | Supp(\mathcal{G}) \subset V \} = \bigcup \{ \mathcal{G} \subset \mathcal{F} | supp(\mathcal{G}) \subset V \}
\end{equation}
for all $\mathcal{F} \in  QCoh(\mathfrak{X})$. 
\end{definition}

\begin{theorem}\label{A}
The following conditions are equivalent for a left exact preradical
functor $F$ on $QCoh(\mathfrak{X})$.
\begin{itemize}
\item (1) $F$ is a radical functor.
\item (2) $F$ preserves injectivity.
\item (3) $F$ is a section functor with support in a specialization-closed subset of $|\mathfrak{X}|$.
\end{itemize}
\end{theorem}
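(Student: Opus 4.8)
The plan is to prove the cyclic chain of implications $(1)\Rightarrow(3)\Rightarrow(2)\Rightarrow(1)$, reducing everything to the affine/local situation via an \'etale presentation whenever possible, since the module-theoretic analogue (Matlis theory plus the characterization of radical section functors on $\mathrm{Mod}(R)$ for $R$ Noetherian, cf.\ \cite{Gabriel1962DesAbeliennes}, \cite{Yoshino2011AbstractFunctors}) is known.

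First, for $(1)\Rightarrow(3)$, suppose $F$ is a left exact radical preradical functor. By \eqref{yichuan} it gives a hereditary torsion theory $(\mathcal{T}_F,\mathcal{F}_F)$ with $\mathcal{T}_F=\{\mathcal{F}: F(\mathcal{F})=\mathcal{F}\}$. The candidate subset is $V:=\bigcup_{\mathcal{F}\in\mathcal{T}_F}\mathrm{Supp}(\mathcal{F})$, or equivalently $V=\{x\in|\mathfrak{X}|: \kappa(\overline{x})\in\mathcal{T}_F \text{ (i.e.\ }j_{\overline{x}*}\kappa(\overline{x})\in\mathcal{T}_F)\}$; I would check these agree using that $\mathcal{T}_F$ is closed under subobjects, quotients, extensions and coproducts, and that every quasi-coherent sheaf in $\mathcal{T}_F$ has all of its "residue-field subquotients at its points" again in $\mathcal{T}_F$ (using the structure of $\mathcal{O}_{\mathfrak{X},\overline{x}}$-modules and localization, Lemma \ref{prehanzijiao}). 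One then shows $V$ is specialization-closed: if $x\in V$ and $y\in\overline{\{x\}}$, then at the level of the local ring $\mathcal{O}_{\mathfrak{X},\overline{y}}$ the point $x$ corresponds to a prime $\mathfrak{p}$ with $\mathfrak{p}\subset\mathfrak{m}_{\overline{y}}$, and $\kappa(\overline{y})$ is a subquotient of something built from $\kappa(\overline{x})$-type modules after pushing forward, hence lies in $\mathcal{T}_F$. Finally $F=\Gamma_V$ because both are the largest subobject supported on $V$: $F(\mathcal{F})\subset\Gamma_V(\mathcal{F})$ since $\mathrm{Supp}(F(\mathcal{F}))\subset V$ (as $F(\mathcal{F})\in\mathcal{T}_F$), and conversely any $\mathcal{G}\subset\mathcal{F}$ with $\mathrm{Supp}(\mathcal{G})\subset V$ satisfies $\mathcal{G}\in\mathcal{T}_F$ (check on an \'etale cover, reducing to the affine statement that a module supported on a specialization-closed set of primes is torsion for the associated Gabriel filter), so $\mathcal{G}=F(\mathcal{G})\subset F(\mathcal{F})$ by Lemma \ref{prehanzijiao}.

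Next, $(3)\Rightarrow(2)$: a section functor $\Gamma_V$ with $V$ specialization-closed preserves injectives. I would verify this by working \'etale-locally: for a scheme $U$ with \'etale $j_U:U\to\mathfrak{X}$ and $\mathcal{J}$ injective quasi-coherent, $j_U^*\mathcal{J}$ is injective and $j_U^*\Gamma_V\mathcal{J}=\Gamma_{V\cap|U|}(j_U^*\mathcal{J})$ (compatibility of $\Gamma_V$ with flat pullback, which I would state as a small lemma), so it suffices to know the affine/scheme case. There, by Theorem \ref{neishefenjie} every injective is a direct sum of copies of $j_{\overline{x}*}E(\overline{x})$, and $\Gamma_V$ commutes with direct sums (it is defined as a union of subobjects, and is left exact, and $V$ specialization-closed forces it to respect coproducts — this is where Noetherianity enters, via Theorem \ref{neishefenjie}(1) and the fact that an essential/associated-prime analysis localizes well), so one is reduced to: $\Gamma_V(j_{\overline{x}*}E(\overline{x}))$ is either $0$ (if $x\notin V$) or all of $j_{\overline{x}*}E(\overline{x})$ (if $x\in V$, using $\mathrm{Ass}=\{x\}$ from Proposition \ref{ass} and that $\mathrm{Supp}(j_{\overline{x}*}E(\overline{x}))\subset\overline{\{x\}}\subset V$), in either case injective.

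Finally, $(2)\Rightarrow(1)$: if the left exact preradical $F$ preserves injectivity I must show $F(\mathcal{F}/F(\mathcal{F}))=0$ for all $\mathcal{F}$. Embed $\mathcal{G}:=\mathcal{F}/F(\mathcal{F})$ into an injective $\mathcal{J}$; then $F(\mathcal{G})=\mathcal{G}\cap F(\mathcal{J})$ by Lemma \ref{prehanzijiao}, so it suffices to show the composite $F(\mathcal{F})\hookrightarrow\mathcal{F}\to$ (quotient) lands outside $F(\mathcal{J})$, i.e.\ that the preimage of $F(\mathcal{J})$ under $\mathcal{F}\to\mathcal{G}\hookrightarrow\mathcal{J}$ is exactly $F(\mathcal{F})$. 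Since $F(\mathcal{J})$ is injective, the inclusion $F(\mathcal{F})\to F(\mathcal{J})$ (note $F(\mathcal{F})=\mathcal{F}\cap F(\mathcal{J})$ again by Lemma \ref{prehanzijiao}, viewing $\mathcal{F}\subset\mathcal{J}$ after first embedding $\mathcal{F}$ into an injective) extends, and a diagram chase using left exactness of $F$ and the idempotent-like behaviour of $F$ on injectives ($F(F(\mathcal{J}))=F(\mathcal{J})$ since $F(\mathcal{J})$ is injective and $F$ preserves injectives, combined with $F\subset\mathbf{1}$) gives the claim. The main obstacle I anticipate is the \'etale-local bookkeeping in $(1)\Rightarrow(3)$ and $(3)\Rightarrow(2)$: one must be careful that $\Gamma_V$, the torsion class $\mathcal{T}_F$, and small/usual supports all behave well under the \'etale pullbacks $j_U^*$ and that a specialization-closed subset of $|\mathfrak{X}|$ pulls back to a specialization-closed subset of $|U|$ — so a preliminary lemma packaging "$\Gamma_{(-)}$ and $\mathrm{Supp}$ commute with flat base change on a Noetherian separated algebraic space" will do most of the work, after which each implication reduces to the classical Noetherian-ring statement.
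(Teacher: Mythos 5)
Your overall route is the reverse cycle $(1)\Rightarrow(3)\Rightarrow(2)\Rightarrow(1)$, whereas the paper proves $(1)\Rightarrow(2)$ (Lemma \ref{radicbaoneishe}), $(2)\Rightarrow(3)$ (Lemma \ref{jieyingyupre}, via Propositions \ref{youhanzi} and \ref{neishehanziqian}) and $(3)\Rightarrow(1)$ (Lemma \ref{jieying}); that difference by itself would be fine, but two of your steps have genuine gaps. The central difficulty of the theorem is identifying an abstract functor $F$ with a section functor on \emph{all} quasi-coherent sheaves, and your $(1)\Rightarrow(3)$ handles exactly this point by declaring that any $\mathcal{G}\subset\mathcal{F}$ with $\mathrm{Supp}(\mathcal{G})\subset V$ lies in $\mathcal{T}_F$, to be "checked on an \'etale cover, reducing to the affine statement". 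That reduction is not available: $F$ and hence $\mathcal{T}_F$ are defined only on $QCoh(\mathfrak{X})$ and do not induce a functor, torsion class, or Gabriel filter on an \'etale presentation $U\to\mathfrak{X}$, so there is no affine statement about $\mathcal{T}_F$ to reduce to. You would instead need an argument on $\mathfrak{X}$ itself (e.g.\ a d\'evissage of coherent sheaves on a Noetherian algebraic space into pieces built from the $j_{\overline{x}*}\kappa(\overline{x})$, which you do not supply and which is not trivial); the paper avoids this by never testing membership in a torsion class on arbitrary sheaves: it proves $F=\Gamma_{V_F}$ first on finite direct sums of indecomposable injectives (Proposition \ref{youhanzi}), then on coherent sheaves via their injective hulls, and then on arbitrary sheaves via coherent subsheaves, using Lemma \ref{prehanzijiao} at each stage. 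The same ill-founded \'etale-local bookkeeping also appears in your specialization-closedness claim, where "$\kappa(\overline{y})$ is a subquotient of something built from $\kappa(\overline{x})$-type modules after pushing forward" is asserted but not proved; the paper's Proposition \ref{neishehanziqian} instead uses the nonzero map $j_{\overline{x}*}E(\overline{x})\to j_{\overline{y}*}E(\overline{y})$ and left exactness.

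Your $(2)\Rightarrow(1)$ is also not a proof as written: with $\mathcal{G}=\mathcal{F}/F(\mathcal{F})\subset\mathcal{J}$ one has $F(\mathcal{G})=\mathcal{G}\cap F(\mathcal{J})$ by Lemma \ref{prehanzijiao}, so the statement "the preimage of $F(\mathcal{J})$ under $\mathcal{F}\to\mathcal{G}\hookrightarrow\mathcal{J}$ is exactly $F(\mathcal{F})$" is literally equivalent to $F(\mathcal{G})=0$; reducing the claim to itself and then invoking an unspecified diagram chase leaves the implication unproved. The missing idea is the splitting: since $F(\mathcal{J})$ is an injective subobject of $\mathcal{J}$ it is a direct summand, $\mathcal{J}=F(\mathcal{J})\oplus\mathcal{C}$, whence $F(\mathcal{J}/F(\mathcal{J}))\cong F(\mathcal{C})=\mathcal{C}\cap F(\mathcal{J})=0$ by Lemma \ref{prehanzijiao}; for general $\mathcal{F}$ embed it in an injective $\mathcal{E}$, note $\mathcal{F}/F(\mathcal{F})\subset\mathcal{E}/F(\mathcal{E})$, and apply Lemma \ref{prehanzijiao} once more. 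With that argument supplied, and with $(1)\Rightarrow(3)$ reworked along the lines of Lemma \ref{jieyingyupre} (or simply replaced by the paper's ordering of the implications), your $(3)\Rightarrow(2)$ — decompose an injective into indecomposables by Theorem \ref{neishefenjie} and observe $\Gamma_V$ sends each $j_{\overline{x}*}E(\overline{x})$ to itself or to $0$ and commutes with the direct sum — is essentially the paper's own argument and is fine.
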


The proof of Theorem \ref{A} consists of a succession of relatively short lemmas. 

\begin{lemma}\label{jieying}
Let $\mathfrak{X}$ be a Noetherian separated algebraic space over $S$, and $V$ be a specialization-closed subset of $|\mathfrak{X}|$.
\begin{itemize}
\item (1) If $\mathcal{G}$ is a subsheaf of $\mathcal{F}$, then the equality $\Gamma_{V}(\mathcal{G}) = \mathcal{G} \cap \Gamma_{V}(\mathcal{F})$ holds.
\item (2) $\Gamma_V(\mathcal{F}/\Gamma_V(\mathcal{F})) = 0$ for every $\mathcal{F}\in QCoh(\mathfrak{X})$.
\item (3) $\Gamma_V$ is a left exact radical functor. 
\end{itemize}
\end{lemma}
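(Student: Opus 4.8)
The plan is to prove the three statements in order, relying on the local description of $\Gamma_V$ and the fact that for an \'etale morphism $j_U : U \to \mathfrak{X}$ with $U$ a scheme the pullback $j_U^*$ is exact, commutes with direct sums and subobjects, and detects which points lie in $V$. Throughout I would reduce claims about $\Gamma_V$ on $\mathfrak{X}$ to the corresponding (known) statements about the section functor with support in a specialization-closed subset on an affine Noetherian scheme, using an \'etale covering $\{U_i \to \mathfrak{X}\}$ and the sheaf condition. The key observation is that, because $V$ is specialization-closed, the collection of subsheaves $\mathcal{G} \subset \mathcal{F}$ with $\mathrm{supp}(\mathcal{G}) \subset V$ is directed (closed under finite sums, since $\mathrm{supp}(\mathcal{G}_1 + \mathcal{G}_2) \subset \mathrm{supp}(\mathcal{G}_1) \cup \mathrm{supp}(\mathcal{G}_2)$) so the union defining $\Gamma_V(\mathcal{F})$ is a well-behaved filtered colimit of subobjects, and it is itself a quasi-coherent subsheaf of $\mathcal{F}$ with support in $V$; hence $\Gamma_V(\mathcal{F})$ is the \emph{largest} such subsheaf.

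For part (1), I would first note that $\Gamma_V$ is a preradical functor in the sense of Definition \ref{preradical}: it is a subfunctor of the identity because any morphism $\mathcal{F} \to \mathcal{F}'$ carries a subsheaf supported in $V$ to one supported in $V$ (support can only shrink under a morphism on stalks), so it restricts to $\Gamma_V(\mathcal{F}) \to \Gamma_V(\mathcal{F}')$. Left exactness: given $0 \to \mathcal{G} \to \mathcal{F} \to \mathcal{H}$, the functor $\Gamma_V$ obviously preserves monomorphisms (it takes subobjects to subobjects), and exactness at the middle spot amounts to showing $\Gamma_V(\mathcal{F}) \cap \mathcal{G} = \Gamma_V(\mathcal{G})$, which is exactly the content of part (1) of this lemma; alternatively one checks left exactness stalk-locally after \'etale pullback, where it is the classical statement. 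So I would prove (1) first, directly: $\mathcal{G} \cap \Gamma_V(\mathcal{F})$ is a subsheaf of $\mathcal{G}$ supported in $V$, hence contained in $\Gamma_V(\mathcal{G})$ by maximality; conversely $\Gamma_V(\mathcal{G})$ is a subsheaf of $\mathcal{F}$ supported in $V$, hence contained in $\Gamma_V(\mathcal{F})$, and it is contained in $\mathcal{G}$, so it lies in the intersection. (This is just Lemma \ref{prehanzijiao} once left exactness is known, but the two directions above give it by hand without circularity.)

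For part (2), I would argue that any nonzero subsheaf $\mathcal{G}/\Gamma_V(\mathcal{F}) \subset \mathcal{F}/\Gamma_V(\mathcal{F})$ with support in $V$ would pull back, along $\mathcal{G} \subset \mathcal{F}$, to a subsheaf $\mathcal{G}$ of $\mathcal{F}$ which sits in an extension $0 \to \Gamma_V(\mathcal{F}) \to \mathcal{G} \to \mathcal{G}/\Gamma_V(\mathcal{F}) \to 0$ of two sheaves supported in $V$; since $V$ is specialization-closed and support is subadditive in short exact sequences (the stalk functors $(-)_{\overline{x}}$ are exact, so $\mathrm{Supp}(\mathcal{G}) \subset \mathrm{Supp}(\Gamma_V(\mathcal{F})) \cup \mathrm{Supp}(\mathcal{G}/\Gamma_V(\mathcal{F})) \subset V$), $\mathcal{G}$ itself is supported in $V$, hence $\mathcal{G} \subset \Gamma_V(\mathcal{F})$ by maximality, forcing $\mathcal{G}/\Gamma_V(\mathcal{F}) = 0$. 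Therefore $\Gamma_V(\mathcal{F}/\Gamma_V(\mathcal{F})) = 0$.

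For part (3), left exactness follows from part (1) as sketched above (or by \'etale-local reduction to the affine case); additivity is clear since $\Gamma_V$ is a subfunctor of the additive identity functor and commutes with finite direct sums (a subsheaf of $\mathcal{F}_1 \oplus \mathcal{F}_2$ supported in $V$ is contained in $\Gamma_V(\mathcal{F}_1) \oplus \Gamma_V(\mathcal{F}_2)$, and conversely). Being a radical functor in the sense of Definition \ref{radical} is precisely part (2). I expect the main obstacle to be the bookkeeping needed to justify that $\Gamma_V(\mathcal{F})$ is genuinely quasi-coherent and that its formation commutes with the \'etale pullbacks $j_U^*$ used in the local reductions — i.e.\ that $j_U^* \Gamma_V(\mathcal{F}) = \Gamma_{j_U^{-1}V}(j_U^*\mathcal{F})$ — which requires knowing that $j_U^*$ preserves both quasi-coherence and the supports of subsheaves, and that $j_U^{-1}V$ is specialization-closed in $|U|$; once that compatibility is in hand, all three assertions follow from the classical affine statements together with the directedness argument above.
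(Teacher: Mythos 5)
Your proposal is correct and follows essentially the same route as the paper: characterize $\Gamma_V(\mathcal{F})$ as the largest subsheaf supported in $V$, deduce (1) by the two maximality inclusions, get (2) from the pullback/extension argument (which the paper leaves implicit), and derive left exactness in (3) from (1) via $\ker\Gamma_V(g)=\mathcal{G}\cap\Gamma_V(\mathcal{F})$. The worries about \'etale-local reduction and compatibility $j_U^*\Gamma_V(\mathcal{F})=\Gamma_{j_U^{-1}V}(j_U^*\mathcal{F})$ are unnecessary for this argument, since the direct maximality reasoning already works inside the Grothendieck category $QCoh(\mathfrak{X})$, where the filtered union defining $\Gamma_V(\mathcal{F})$ is automatically a quasi-coherent subsheaf.
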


\begin{proof}
(1) If $\mathcal{H}$ is a quasi-coherent sheaf on $\mathfrak{X}$, then
\begin{equation}\nonumber
\mathcal{H}\subset \Gamma_V(\mathcal{G})\Leftrightarrow \mathcal{H}\subset \mathcal{G}\  and\  Supp(\mathcal{H})\subset V \Leftrightarrow \mathcal{H}\subset \Gamma_V(\mathcal{F}) \cap \mathcal{G}.
\end{equation}

(2) If $\mathcal{H}$ is a quasi-coherent sheaf on $\mathfrak{X}$, then
\begin{equation}\nonumber
\mathcal{H}\subset \Gamma_V(\mathcal{F}/\Gamma_V(\mathcal{F}))\Leftrightarrow \mathcal{H}\subset \mathcal{F}/\Gamma_V(\mathcal{F})\  and\  Supp(\mathcal{H})\subset V \Rightarrow \mathcal{H}=0.
\end{equation}

(3) Let $0\rightarrow \mathcal{K} \xrightarrow{f} \mathcal{F} \xrightarrow{g} \mathcal{G}$ be an exact sequence in $QCoh(\mathfrak{X})$. By (1) we have $\Gamma_{V}(\mathcal{K}) = \mathcal{K} \cap \Gamma_{V}(\mathcal{F})$, hence $0\rightarrow \Gamma_V(\mathcal{K}) \rightarrow \Gamma_V(\mathcal{F})$ is an exact sequence. Let $\mathcal{H}$ be a quasi-coherent sheaf on $\mathfrak{X}$. We have
\begin{equation}\nonumber
\mathcal{H} \subset Ker\Gamma_V(g)\Leftrightarrow \mathcal{H}\subset Kerg\cap\Gamma_V(\mathcal{F})= Imf\cap\Gamma_V(\mathcal{F})\Leftrightarrow \mathcal{H}\subset Im\Gamma_V(f), 
\end{equation}
hence $0\rightarrow \Gamma_V(\mathcal{K}) \rightarrow \Gamma_V(\mathcal{F})  \rightarrow \Gamma_V(\mathcal{G})$ is an exact sequence.
\end{proof}

\begin{lemma}
Let $\mathfrak{X}$ be a Noetherian algebraic space over $S$. Let $F: QCoh(\mathfrak{X})\rightarrow QCoh(\mathfrak{X})$ be a left exact radical functor.\label{radicbaoneishe}
\begin{itemize}
\item (1) Let $x \in |\mathfrak{X}|$, then $F(j_{\overline{x}*}E(\overline{x}))$ is identical to either $j_{\overline{x}*}E(\overline{x})$ or $0$.
\item (2) $F$ preserves injectivity.
\end{itemize}
\end{lemma}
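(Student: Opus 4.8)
The plan is to establish (1) first and then obtain (2) almost for free. Set $E:=j_{\overline{x}*}E(\overline{x})$; by Theorem \ref{neishefenjie} this is an indecomposable injective object of the Grothendieck category $QCoh(\mathfrak{X})$, hence a uniform object. Let $(\mathcal{T}_F,\mathcal{F}_F)$ be the hereditary torsion theory attached to $F$ in \eqref{yichuan}. Since $F$ is a subfunctor of $\mathbf{1}$, $F(E)$ is a subobject of $E$; applying Lemma \ref{prehanzijiao} to $F(E)\subseteq E$ gives $F(F(E))=F(E)\cap F(E)=F(E)$, so $F(E)\in\mathcal{T}_F$, while the radical property gives $E/F(E)\in\mathcal{F}_F$. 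Thus everything reduces to showing that $F(E)\neq 0$ forces $F(E)=E$, i.e. $E/F(E)=0$.

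Suppose instead $0\neq F(E)$ and $0\neq E/F(E)$. Since $\mathfrak{X}$ is Noetherian, $QCoh(\mathfrak{X})$ is locally Noetherian, so I can pick a nonzero coherent subsheaf $\mathcal{C}\subseteq F(E)$ and a nonzero coherent subsheaf $\mathcal{D}\subseteq E/F(E)$. Because $\mathcal{C}\subseteq E$ we have $\varnothing\neq Ass_{\mathfrak{X}}(\mathcal{C})\subseteq Ass_{\mathfrak{X}}(E)=\{x\}$ by Proposition \ref{ass}, so $Ass_{\mathfrak{X}}(\mathcal{C})=\{x\}$ and hence $Supp(\mathcal{C})=\overline{\{x\}}$ (for a coherent sheaf the support is the union of the closures of its associated points); and since $\mathcal{D}$ is a subquotient of $E$, $Supp(\mathcal{D})\subseteq Supp(E)=\overline{\{x\}}$. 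As $\mathcal{T}_F$ is closed under subobjects and quotients while $\mathcal{F}_F$ is closed under subobjects, $\mathcal{C}\in\mathcal{T}_F$ and $\mathcal{D}\in\mathcal{F}_F$. Now $\mathcal{T}_F\cap Coh(\mathfrak{X})$ is a Serre subcategory of $Coh(\mathfrak{X})$ containing the coherent sheaf $\mathcal{C}$ with $Supp(\mathcal{C})=\overline{\{x\}}$; by the classification of Serre subcategories of $Coh(\mathfrak{X})$ by specialization-closed subsets of $|\mathfrak{X}|$ — equivalently, by the dévissage of coherent sheaves into sheaves of the form $(\iota_{Z'})_*(\text{coherent ideal sheaf})$ with $Z'\subseteq\overline{\{x\}}$ integral closed — this forces $\mathcal{T}_F\cap Coh(\mathfrak{X})$ to contain every coherent sheaf supported on $\overline{\{x\}}$, in particular $\mathcal{D}$. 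Then $\mathcal{D}\in\mathcal{T}_F\cap\mathcal{F}_F=0$, contradicting $\mathcal{D}\neq 0$. This proves (1).

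For (2), let $\mathcal{J}$ be an injective quasi-coherent sheaf. By Theorem \ref{neishefenjie} write $\mathcal{J}=\bigoplus_{i\in I}\mathcal{J}_i$ with each $\mathcal{J}_i=j_{\overline{x_i}*}E(\overline{x_i})$ indecomposable injective. Since $F$ is an additive subfunctor of $\mathbf{1}$ it commutes with finite direct sums; writing $\mathcal{J}$ as the directed union of its finite partial sums and combining this with Lemma \ref{prehanzijiao} and the exactness of directed unions in $QCoh(\mathfrak{X})$ one obtains $F(\mathcal{J})=\bigoplus_{i\in I}F(\mathcal{J}_i)$. By part (1) each $F(\mathcal{J}_i)$ equals $0$ or $\mathcal{J}_i$, so $F(\mathcal{J})=\bigoplus_{i\in I_0}\mathcal{J}_i$ for some $I_0\subseteq I$, which is injective by Theorem \ref{neishefenjie}(1); hence $F$ preserves injectivity.

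The step I expect to be the real obstacle is the dévissage in (1): from the single fact that the Serre subcategory $\mathcal{T}_F\cap Coh(\mathfrak{X})$ contains one coherent sheaf with support exactly $\overline{\{x\}}$, one must produce inside it the structure sheaf $(\iota_Z)_*\mathcal{O}_Z$ of the reduced integral closed subspace $Z=\overline{\{x\}}$ — climbing from $\mathcal{C}$ (which may be only a proper ideal sheaf on $Z$) up to $\mathcal{O}_Z$, and then reaching all coherent sheaves supported on $Z$ by the support filtration. Phrased invariantly, this is precisely the assertion that the hereditary torsion theory $(\mathcal{T}_F,\mathcal{F}_F)$ on the locally Noetherian category $QCoh(\mathfrak{X})$ is stable, and it is here — and essentially only here — that the Noetherian hypothesis on $\mathfrak{X}$ is genuinely used.
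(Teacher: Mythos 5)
Your part (2) is correct and essentially the paper's argument in different clothing: the paper splits the indecomposable decomposition of $\mathcal{J}$ according to part (1) and uses that $\mathcal{T}_F$ is closed under direct sums while $\mathcal{F}_F$ is closed under products and subobjects to identify $F(\mathcal{J})$ with the ``torsion'' half; your directed-union argument via Lemma \ref{prehanzijiao} reaches the same identity $F(\mathcal{J})=\bigoplus_i F(\mathcal{J}_i)$ and is fine.

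Part (1) is where you genuinely diverge, and where the one real issue lies. The paper stays local at $x$: writing $0\rightarrow\mathcal{G}\rightarrow j_{\overline{x}*}E(\overline{x})\rightarrow\mathcal{H}\rightarrow 0$ for the torsion decomposition, if $\mathcal{G}\neq 0$ then $Ass(\mathcal{G})=\{x\}$ (Proposition \ref{ass}) gives $j_{\overline{x}*}\kappa(\overline{x})\subset\mathcal{G}$, hence $j_{\overline{x}*}\kappa(\overline{x})\in\mathcal{T}_F$; and then $j_{\overline{x}*}E(\overline{x})\in\mathcal{T}_F$ because every element of $E(\overline{x})$ is killed by a power of $\mathfrak{m}_{\overline{x}}$, so $j_{\overline{x}*}E(\overline{x})$ is a directed union of iterated extensions of sheaves of $\kappa(\overline{x})$-vector spaces (the same filtration used in Lemma \ref{shensheng}) and $\mathcal{T}_F$, being localizing, swallows all of these. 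No classification theorem is needed: ``stability'' is only required at the single object $j_{\overline{x}*}E(\overline{x})$, where it is elementary. You instead argue by contradiction through the classification of Serre subcategories of $Coh(\mathfrak{X})$ by specialization-closed subsets of $|\mathfrak{X}|$. The surrounding bookkeeping in your argument (coherent subsheaves exist, $Ass(\mathcal{C})=\{x\}$, $Supp(\mathcal{D})\subset\overline{\{x\}}$, $\mathcal{T}_F\cap Coh(\mathfrak{X})$ is Serre, $\mathcal{T}_F\cap\mathcal{F}_F=0$) is sound, but the pivotal step is exactly the one you flag as ``the real obstacle'' and then do not carry out: that a Serre subcategory of $Coh(\mathfrak{X})$ containing one sheaf with support $\overline{\{x\}}$ contains every coherent sheaf supported there. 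For a Noetherian algebraic space this is true but is neither proved in the paper nor an immediate citation of \cite{Gabriel1962DesAbeliennes} (which treats the affine/scheme case); to close it you would need either a precise reference covering algebraic spaces (e.g. Kanda's atom-spectrum classification \cite{Kanda2012ClassifyingSpectrum} combined with the identification of atoms with points implicit in Theorem \ref{neishefenjie}(3)) or an actual dévissage, which for spaces requires decency (generic points of integral closed subspaces) and an étale-local reduction. So as written there is a gap at the heart of your (1); the cheapest repair is to cite the classification properly, and the cheaper insight is the paper's: you never needed the global statement, only the $\mathfrak{m}_{\overline{x}}$-power-torsion filtration of $E(\overline{x})$.
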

\begin{proof}
(1) Since $F$ is a left exact radical functor, there is a hereditary torsion theory $(\mathcal{T}_{F}, \mathcal{F}_{F})$, which is defined in (\ref{yichuan}). Then there is an exact sequence 
\begin{equation}\nonumber
0 \rightarrow \mathcal{G} \rightarrow j_{\overline{x}*}E(\overline{x}) \rightarrow \mathcal{H}  \rightarrow 0 
\end{equation}
with $\mathcal{G} \in \mathcal{T}_{F}$ and $\mathcal{H} \in \mathcal{F}_{F}$. 
If $\mathcal{G} = 0$, then $j_{\overline{x}*}E(\overline{x}) \cong \mathcal{H} \in \mathcal{F}_{F}$, therefore $F(j_{\overline{x}*}E(\overline{x})) = 0$. If $\mathcal{G} \not= 0$, since $Ass_{\mathfrak{X}}(j_{\overline{x}*}E(\overline{x}))= x$, we have $Ass_{\mathfrak{X}}(\mathcal{G})= x$, hence $j_{\overline{x}*}\kappa(\overline{x}) \subset \mathcal{G}.$ Since $\mathcal{T}_{F}$ is a localizing subcategory and $\mathcal{G} \in \mathcal{T}_{F}$, we have $j_{\overline{x}*}\kappa(\overline{x}) \in \mathcal{T}_{F}.$ By Proposition \ref{ass} $j_{\overline{x}*}E(\overline{x}) \in \mathcal{T}_{F}.$ Therefore $F(j_{\overline{x}*}E(\overline{x})) = j_{\overline{x}*}E(\overline{x})$. 

(2) For an injective sheaf $\mathcal{J} \in QCoh(\mathfrak{X})$, by Theorem \ref{neishefenjie} it has an indecomposable decomposition $\mathcal{J} = \bigoplus_{i\in \mathcal{I}}j_{\overline{x_i}*}E(\overline{x_i})$. We set $\mathcal{J}_1 = \bigoplus_{i\in \mathcal{I}_1} j_{\overline{x_i}*}E(\overline{x_i})$ and $\mathcal{J}_2 = \bigoplus_{i\in \mathcal{I}_2} j_{\overline{x_i}*}E(\overline{x_i})$, where
\begin{equation}
\mathcal{I}_1 = \{i \in \mathcal{I} | F(j_{\overline{x_i}*}E(\overline{x_i})) = j_{\overline{x_i}*}E(\overline{x_i}) \},
\end{equation}
\begin{equation}
\mathcal{I}_2 = \{i \in \mathcal{I} | F(j_{\overline{x_i}*}E(\overline{x_i})) = 0 \}.
\end{equation}
By (1) we have $\mathcal{J} = \mathcal{J}_1\oplus \mathcal{J}_2.$ Since $\mathcal{T}_{F}$ is closed under taking direct sums and $\mathcal{F}_{F}$ is closed under taking direct products and subsheaves, we have $\mathcal{I}_1 \in \mathcal{T}_{F}$, and $\mathcal{I}_2 \in \mathcal{F}_{F}$. Therefore we have an equality $F(\mathcal{I}) = F(\mathcal{I}_1) \oplus F(\mathcal{I}_2) = \mathcal{I}_1$, which is an injective sheaf.
\end{proof}

\begin{proposition}
\label{youhanzi}
Let $F: QCoh(\mathfrak{X})\rightarrow QCoh(\mathfrak{X})$ be a left exact preradical functor which preserves injectivity and $x \in |\mathfrak{X}|$ be a point, then
\begin{itemize}
\item (1) $F(j_{\overline{x}*}E(\overline{x}))$ is identical to either $j_{\overline{x}*}E(\overline{x})$ or $0$.
\item (2) $F(j_{\overline{x}*}\kappa(\overline{x}))$ is identical to either $j_{\overline{x}*}\kappa(\overline{x})$ or $0$.
\end{itemize}
\end{proposition}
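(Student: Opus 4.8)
The plan is to reduce everything to the behaviour of $F$ on the indecomposable injective $j_{\overline{x}*}E(\overline{x})$, exactly as in part (1), and then propagate the conclusion to $j_{\overline{x}*}\kappa(\overline{x})$ using the essentiality of $\kappa(\overline{x})$ inside $E(\overline{x})$ together with the left-exactness of $F$. For (1): since $F$ preserves injectivity, $F(j_{\overline{x}*}E(\overline{x}))$ is an injective subsheaf of the indecomposable injective $j_{\overline{x}*}E(\overline{x})$; but a nonzero injective subobject of an indecomposable injective is a direct summand, hence by indecomposability equals the whole thing. Thus $F(j_{\overline{x}*}E(\overline{x}))$ is either $0$ or $j_{\overline{x}*}E(\overline{x})$. (One should verify that $F(M)\hookrightarrow M$ really is injective \emph{as a subsheaf}, i.e. that the inclusion splits; this follows because an injective object that embeds in another object is always a direct summand of it.)

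For (2), note first that $j_{\overline{x}*}\kappa(\overline{x})$ is a subsheaf of $j_{\overline{x}*}E(\overline{x})$: on stalks at $\overline{x}$ this is the inclusion $\kappa(\overline{x})\hookrightarrow E(\overline{x})$, which is an essential extension, and $j_{\overline{x}*}$ is left exact so it preserves this monomorphism. Since $F$ is a left-exact preradical, Lemma \ref{prehanzijiao} gives
\begin{equation}\nonumber
F(j_{\overline{x}*}\kappa(\overline{x})) = j_{\overline{x}*}\kappa(\overline{x}) \cap F(j_{\overline{x}*}E(\overline{x})).
\end{equation}
Now split into the two cases from (1). If $F(j_{\overline{x}*}E(\overline{x})) = 0$, the intersection is $0$, so $F(j_{\overline{x}*}\kappa(\overline{x})) = 0$. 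If $F(j_{\overline{x}*}E(\overline{x})) = j_{\overline{x}*}E(\overline{x})$, the intersection is all of $j_{\overline{x}*}\kappa(\overline{x})$, so $F(j_{\overline{x}*}\kappa(\overline{x})) = j_{\overline{x}*}\kappa(\overline{x})$. Either way we get the dichotomy claimed.

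The main obstacle is the clean handling of the stalk/pushforward bookkeeping — making sure that $j_{\overline{x}*}\kappa(\overline{x})\subset j_{\overline{x}*}E(\overline{x})$ is genuinely a subobject in $QCoh(\mathfrak{X})$ (not merely after localizing), and that Lemma \ref{prehanzijiao} applies verbatim in the Grothendieck category $QCoh(\mathfrak{X})$; both are routine given that $QCoh(\mathfrak{X})$ is a Grothendieck category and $j_{\overline{x}*}$ is left exact, but they are the only points requiring any care. Everything else is a two-line case analysis. An alternative, slightly more self-contained route for (2) avoids Lemma \ref{prehanzijiao}: apply $F$ to the short exact sequence $0\to j_{\overline{x}*}\kappa(\overline{x})\to j_{\overline{x}*}E(\overline{x})\to \mathcal{Q}\to 0$ and use left-exactness of $F$ to identify $F(j_{\overline{x}*}\kappa(\overline{x}))$ with $\ker\big(F(j_{\overline{x}*}E(\overline{x}))\to F(\mathcal{Q})\big)$, then invoke (1); but the Lemma \ref{prehanzijiao} argument is shorter and I would present that one.
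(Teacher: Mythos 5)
Your proposal is correct and follows essentially the same route as the paper: part (1) by observing that $F(j_{\overline{x}*}E(\overline{x}))$ is an injective subsheaf, hence a direct summand, of the indecomposable injective $j_{\overline{x}*}E(\overline{x})$, and part (2) by applying Lemma \ref{prehanzijiao} to get $F(j_{\overline{x}*}\kappa(\overline{x})) = j_{\overline{x}*}\kappa(\overline{x}) \cap F(j_{\overline{x}*}E(\overline{x}))$ and invoking (1). Your additional remarks on why the inclusion splits and why $j_{\overline{x}*}\kappa(\overline{x})$ is genuinely a subobject are sensible elaborations of points the paper leaves implicit, not a different argument.
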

\begin{proof}
(1) Since $F(j_{\overline{x}*}E(\overline{x}))$ is an injective subsheaf of an indecomposable
injective sheaf $j_{\overline{x}*}E(\overline{x})$, it is a direct summand of $j_{\overline{x}*}E(\overline{x})$. Thus the
indecomposability of $j_{\overline{x}*}E(\overline{x})$ forces $F(j_{\overline{x}*}E(\overline{x}))$ is either  $j_{\overline{x}*}E(\overline{x})$ or $0$.

(2) It follows from Lemma \ref{prehanzijiao} that $F(j_{\overline{x}*}\kappa(\overline{x})) = j_{\overline{x}*}\kappa(\overline{x}) \cap F(j_{\overline{x}*}E(\overline{x}))$, therefore $F(j_{\overline{x}*}\kappa(\overline{x}))$ is either $j_{\overline{x}*}\kappa(\overline{x})$ or $0$ by (1). 
\end{proof}

For a left exact preradical functor $F$ which preserves injectivity, we define a subset $V_F$ of $|\mathfrak{X}|$ as follows:
\begin{equation}
V_F = \{ x \in |\mathfrak{X}| : F(j_{\overline{x}*}\kappa(\overline{x})) = j_{\overline{x}*}\kappa(\overline{x}) \}.
\end{equation}
Note from the proof of Proposition \ref{youhanzi} that $V_F$ is the same as the set 
\begin{equation}
\{ x \in |\mathfrak{X}| : F(j_{\overline{x}*}E(\overline{x})) = j_{\overline{x}*}E(\overline{x}) \}.
\end{equation}

\begin{proposition}\label{neishehanziqian}
Let $F$ be a left exact preradical functor which preserves injectivity.
Then $V_F$ is a specialization-closed subset.
\end{proposition}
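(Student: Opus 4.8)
The plan is to prove directly that $V_F$ is stable under specialization: given $x\in V_F$ and $y\in\overline{\{x\}}$, I will show $y\in V_F$. The idea is to transport the equality $F(j_{\overline x*}E(\overline x))=j_{\overline x*}E(\overline x)$ along a suitable nonzero morphism $\varphi\colon j_{\overline x*}E(\overline x)\to j_{\overline y*}E(\overline y)$, using only that $F$ is a subfunctor of $\mathbf 1$ together with Proposition \ref{youhanzi}.

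\emph{Step 1: producing $\varphi$.} Using the adjunction $j_{\overline y}^{*}\dashv j_{\overline y*}$ and the identification $j_{\overline y}^{*}\mathcal G=\mathcal G_{\overline y}$ (the stalk at the geometric point $\overline y$, viewed as an $\mathcal O_{\mathfrak X,\overline y}$-module), one gets
\[
\mathrm{Hom}_{\mathcal O_{\mathfrak X}}\!\big(j_{\overline x*}E(\overline x),\,j_{\overline y*}E(\overline y)\big)\;\cong\;\mathrm{Hom}_{\mathcal O_{\mathfrak X,\overline y}}\!\big((j_{\overline x*}E(\overline x))_{\overline y},\,E(\overline y)\big).
\]
Since $E(\overline y)$ is an injective hull of the residue field of the local ring $\mathcal O_{\mathfrak X,\overline y}$, every nonzero $\mathcal O_{\mathfrak X,\overline y}$-module $N$ admits a nonzero map to $E(\overline y)$: choose $0\neq n\in N$; the cyclic submodule generated by $n$ surjects onto the residue field (its annihilator is a proper ideal, hence contained in $\mathfrak m_{\overline y}$), and this map extends to $N$ by injectivity of $E(\overline y)$. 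Therefore it suffices to show $(j_{\overline x*}E(\overline x))_{\overline y}\neq 0$, i.e.\ $y\in\mathrm{Supp}(j_{\overline x*}E(\overline x))$. Now $x\in\mathrm{Supp}(j_{\overline x*}E(\overline x))$, because its stalk at $\overline x$ is $E(\overline x)\neq 0$ (as computed in the proof of Proposition \ref{ass}); and $\mathrm{Supp}$ of a quasi-coherent sheaf on the Noetherian algebraic space $\mathfrak X$ is stable under specialization, since such a sheaf is a filtered colimit of its coherent subsheaves, each of which has closed support. Hence $\overline{\{x\}}\subseteq\mathrm{Supp}(j_{\overline x*}E(\overline x))$, and in particular $y$ lies there, so a nonzero $\varphi$ exists.

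\emph{Step 2: concluding.} Fix such a nonzero $\varphi$. Because $F$ is a subfunctor of the identity functor, $F(\varphi)$ is the restriction of $\varphi$ to $F(j_{\overline x*}E(\overline x))$, so $\varphi$ carries $F(j_{\overline x*}E(\overline x))$ into $F(j_{\overline y*}E(\overline y))$. As $x\in V_F$ we have $F(j_{\overline x*}E(\overline x))=j_{\overline x*}E(\overline x)$, hence $\mathrm{Im}(\varphi)\subseteq F(j_{\overline y*}E(\overline y))$, and $\mathrm{Im}(\varphi)\neq 0$ since $\varphi\neq 0$. By Proposition \ref{youhanzi}(1), $F(j_{\overline y*}E(\overline y))$ equals either $j_{\overline y*}E(\overline y)$ or $0$; it is nonzero, so it equals $j_{\overline y*}E(\overline y)$, i.e.\ $y\in V_F$. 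This shows $V_F$ is specialization-closed.

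I expect the only real work to be in Step 1, namely checking that supports of quasi-coherent sheaves on a Noetherian algebraic space are specialization-closed and that the stalk computation $(j_{\overline x*}E(\overline x))_{\overline x}=E(\overline x)$ and the adjunction $j_{\overline y}^{*}\dashv j_{\overline y*}$ behave exactly as in the scheme case; Step 2 is purely formal once the nonzero morphism $\varphi$ is in hand.
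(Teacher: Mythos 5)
Your proof is correct and follows essentially the same route as the paper: both arguments produce a nonzero morphism $\varphi\colon j_{\overline{x}*}E(\overline{x})\to j_{\overline{y}*}E(\overline{y})$ for $y\in\overline{\{x\}}$ and then transport the equality $F(j_{\overline{x}*}E(\overline{x}))=j_{\overline{x}*}E(\overline{x})$ along $\varphi$ to force $F(j_{\overline{y}*}E(\overline{y}))\neq 0$, hence $=j_{\overline{y}*}E(\overline{y})$ by the dichotomy of Proposition \ref{youhanzi}. The only differences are cosmetic: you build $\varphi$ via the adjunction $j_{\overline{y}}^{*}\dashv j_{\overline{y}*}$ and specialization-closedness of $Supp$, and conclude using only the subfunctor property, whereas the paper extends a map $j_{\overline{x}*}\kappa(\overline{x})\to j_{\overline{y}*}E(\overline{y})$ by injectivity and invokes left exactness together with Lemma \ref{prehanzijiao}.
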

\begin{proof}
Let $x \in V_F$ and $y\in \overline{\{x \}}$. A natural nontrivial morphism $\kappa(\overline{x}) \rightarrow \kappa(\overline{y}) \rightarrow  E(\overline{y}) $ extends to a non-zero morphism $j_{\overline{x}*}E(\overline{x}) \rightarrow j_{\overline{y}*}E(\overline{y})$. There is an exact sequence
\begin{equation}\nonumber
0 \rightarrow \mathcal{K} \rightarrow j_{\overline{x}*}E(\overline{x}) \rightarrow j_{\overline{y}*}E(\overline{y}) .
\end{equation}
Since $F$ is a left exact preradical functor, there is an exact sequence
\begin{equation}\nonumber
0 \rightarrow F(\mathcal{K}) \rightarrow F(j_{\overline{x}*}E(\overline{x})) \rightarrow F(j_{\overline{y}*}E(\overline{y})) .
\end{equation}
We have $F(j_{\overline{x}*}E(\overline{x})) = j_{\overline{x}*}E(\overline{x})$ and
$F(\mathcal{K}) = \mathcal{K} \cap F(j_{\overline{x}*}E(\overline{x})) = \mathcal{K}$ by Lemma \ref{prehanzijiao}, therefore $F(j_{\overline{y}*}E(\overline{y})) = j_{\overline{y}*}E(\overline{y})$, hence $y \in V_F$. 
\end{proof}

\begin{lemma}\label{jieyingyupre}
Let $F$ be a left exact preradical functor which preserves injectivity. Then the equality $F = \Gamma_{V_F}$ holds as subfunctors of $\mathbf{1}$, where $V_F$ is a specialization-closed subset of $|\mathfrak{X}|$ defined in Proposition \ref{neishehanziqian}.
\end{lemma}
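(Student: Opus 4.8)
The plan is to show the two subfunctors agree objectwise by checking equality on injective sheaves first, where the indecomposable decomposition of Theorem \ref{neishefenjie} makes both functors completely explicit, and then bootstrapping to arbitrary quasi-coherent sheaves via injective resolutions and left exactness. So first I would take an arbitrary injective $\mathcal{J} \in QCoh(\mathfrak{X})$ and write $\mathcal{J} = \bigoplus_{i\in\mathcal{I}} j_{\overline{x_i}*}E(\overline{x_i})$. Since $F$ preserves injectivity and is a preradical, $F(\mathcal{J})$ is an injective subsheaf of $\mathcal{J}$, hence a direct summand; using Proposition \ref{youhanzi}(1) together with the additivity of $F$ on the summands (which follows by applying Lemma \ref{prehanzijiao} to the inclusions of the individual summands $j_{\overline{x_i}*}E(\overline{x_i}) \hookrightarrow \mathcal{J}$), one gets $F(\mathcal{J}) = \bigoplus_{x_i \in V_F} j_{\overline{x_i}*}E(\overline{x_i})$. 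On the other side, $\Gamma_{V_F}(\mathcal{J})$ picks out exactly the sub-sum of those summands whose support lies in $V_F$; since $\mathrm{Supp}(j_{\overline{x_i}*}E(\overline{x_i})) = \overline{\{x_i\}}$ and $V_F$ is specialization-closed by Proposition \ref{neishehanziqian}, the condition $\overline{\{x_i\}} \subset V_F$ is equivalent to $x_i \in V_F$. Hence $F(\mathcal{J}) = \Gamma_{V_F}(\mathcal{J})$ for all injective $\mathcal{J}$.

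Next I would extend from injectives to all of $QCoh(\mathfrak{X})$. Given arbitrary $\mathcal{F}$, embed it in an injective $\mathcal{J}$ with cokernel $\mathcal{J}'$, so there is an exact sequence $0 \to \mathcal{F} \to \mathcal{J} \to \mathcal{J}'$ where $\mathcal{J}'$ may be taken injective as well (take the first two terms of an injective resolution). Both $F$ and $\Gamma_{V_F}$ are left exact, so applying each gives a commutative diagram with exact rows $0 \to F(\mathcal{F}) \to F(\mathcal{J}) \to F(\mathcal{J}')$ and $0 \to \Gamma_{V_F}(\mathcal{F}) \to \Gamma_{V_F}(\mathcal{J}) \to \Gamma_{V_F}(\mathcal{J}')$. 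We have already identified the middle and right vertical maps as the identity (equality of subobjects), and all the horizontal maps are the canonical inclusions into $\mathcal{J}$ and $\mathcal{J}'$ coming from the fact that $F$ and $\Gamma_{V_F}$ are subfunctors of $\mathbf{1}$; so $F(\mathcal{F})$ and $\Gamma_{V_F}(\mathcal{F})$ are both equal to the kernel of $\mathcal{J} \to \mathcal{J}'$ intersected appropriately — more precisely both equal $\mathcal{F} \cap F(\mathcal{J}) = \mathcal{F} \cap \Gamma_{V_F}(\mathcal{J})$ inside $\mathcal{J}$, using Lemma \ref{prehanzijiao} and Lemma \ref{jieying}(1) respectively for the two functors. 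This forces $F(\mathcal{F}) = \Gamma_{V_F}(\mathcal{F})$ as subobjects of $\mathcal{F}$, and naturality is automatic since both are subfunctors of the identity and agree objectwise.

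The main obstacle I anticipate is the first step — pinning down $F(\mathcal{J})$ for a general (possibly infinite) direct sum of indecomposable injectives. Proposition \ref{youhanzi} only handles a single indecomposable summand, and one must be careful that $F$ distributes over the infinite direct sum. The clean way around this is not to assume $F$ commutes with coproducts, but rather to argue as in the proof of Lemma \ref{radicbaoneishe}(2): $F(\mathcal{J})$ is a direct summand of $\mathcal{J}$ and hence, being injective, is itself a direct sum of indecomposable injectives, which by the uniqueness of indecomposable decompositions must be a sub-sum $\bigoplus_{i \in \mathcal{I}'} j_{\overline{x_i}*}E(\overline{x_i})$ of the given one; then Lemma \ref{prehanzijiao} applied to each inclusion $j_{\overline{x_i}*}E(\overline{x_i}) \hookrightarrow \mathcal{J}$ gives $F(j_{\overline{x_i}*}E(\overline{x_i})) = j_{\overline{x_i}*}E(\overline{x_i}) \cap F(\mathcal{J})$, which is $j_{\overline{x_i}*}E(\overline{x_i})$ precisely when $i \in \mathcal{I}'$, so $\mathcal{I}'$ is exactly the index set of those $i$ with $x_i \in V_F$. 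The rest is bookkeeping with left exactness, which is routine.
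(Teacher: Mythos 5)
Your two-step strategy (establish $F=\Gamma_{V_F}$ on injective sheaves, then pass to an arbitrary $\mathcal{F}$ by intersecting with an injective hull via Lemma \ref{prehanzijiao} and Lemma \ref{jieying}(1)) is viable, and the second step is fine as written — indeed you do not even need the cokernel term $\mathcal{J}'$, since $F(\mathcal{F})=\mathcal{F}\cap F(\mathcal{J})$ and $\Gamma_{V_F}(\mathcal{F})=\mathcal{F}\cap\Gamma_{V_F}(\mathcal{J})$ already finish the argument once the injective case is known. The genuine gap is exactly at the point you flagged, and your proposed workaround does not close it: from the fact that $F(\mathcal{J})$ is an injective, hence direct summand of $\mathcal{J}=\bigoplus_{i\in\mathcal{I}}j_{\overline{x_i}*}E(\overline{x_i})$, uniqueness of indecomposable decompositions (Krull--Remak--Schmidt--Azumaya) only controls the isomorphism types and multiplicities of the indecomposable constituents of $F(\mathcal{J})$; it does \emph{not} say that $F(\mathcal{J})$ is a coordinate sub-sum of the given decomposition as a subobject of $\mathcal{J}$ (a direct summand of $E\oplus E$ can be the diagonal copy of $E$). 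Since the lemma asserts equality of subfunctors of $\mathbf{1}$, i.e.\ equality of subobjects, an identification of $F(\mathcal{J})$ up to isomorphism is not enough, and your subsequent bookkeeping with Lemma \ref{prehanzijiao} presupposes the coordinate description. Note also that Lemma \ref{prehanzijiao} by itself only yields $j_{\overline{x_i}*}E(\overline{x_i})\cap F(\mathcal{J})=0$ for $x_i\notin V_F$, which does not exclude diagonal-type behaviour.

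The statement you need is true and can be repaired without assuming $F$ commutes with coproducts. Since $F$ is a subfunctor of $\mathbf{1}$, $F(\mathcal{J})$ is fully invariant: applying the endomorphisms $\iota_i\circ p_i$ of $\mathcal{J}$ gives $p_i(F(\mathcal{J}))\subset F(\mathcal{J})\cap j_{\overline{x_i}*}E(\overline{x_i})=F(j_{\overline{x_i}*}E(\overline{x_i}))$, and since every local section of $\bigoplus_i j_{\overline{x_i}*}E(\overline{x_i})$ has only finitely many nonzero components, $F(\mathcal{J})=\bigoplus_i F(j_{\overline{x_i}*}E(\overline{x_i}))=\bigoplus_{x_i\in V_F}j_{\overline{x_i}*}E(\overline{x_i})$; the same argument identifies $\Gamma_{V_F}(\mathcal{J})$. (Alternatively: exhaust $\mathcal{J}$ by its finite sub-sums $\mathcal{J}_S$, use $F(\mathcal{J})\cap\mathcal{J}_S=F(\mathcal{J}_S)$ from Lemma \ref{prehanzijiao}, the finite case from Proposition \ref{youhanzi}, and AB5.) It is worth noting that the paper sidesteps the infinite-sum issue entirely: it proves the equality for finite direct sums of indecomposable injectives, deduces it for coherent $\mathcal{F}$ by intersecting with the (finite) injective hull, and then handles arbitrary quasi-coherent $\mathcal{F}$ by testing coherent subsheaves $\mathcal{G}\subset\mathcal{F}$ against the criterion $\mathcal{G}\subset F(\mathcal{F})\Leftrightarrow F(\mathcal{G})=\mathcal{G}$. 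With your injective step repaired as above, your route is a correct and somewhat shorter alternative.
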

\begin{proof}
First of all, we consider the case that $\mathcal{F}$ is a finite direct sum of indecomposabe injective objects $\bigoplus^{n}_{i=1}j_{\overline{x_i}*}E(\overline{x_i})$ in $QCoh(\mathfrak{X})$. 
Then we have an equality 
\begin{equation}\nonumber
F(\mathcal{F}) = \bigoplus_{{x_i}\in V_F} j_{\overline{x_i}*}E(\overline{x_i}) = \Gamma_{V_F}(\mathcal{F})
\end{equation}
by Proposition \ref{youhanzi}. 

Next, we consider the case that $\mathcal{F}\in Coh(\mathfrak{X})$. Since the injective hull $E(\mathcal{F})$ of $\mathcal{F}$ is a finite direct sum of indecomposable injective sheaves, we have already shown that $F(E(\mathcal{F})) = \Gamma_{V_F} (E(\mathcal{F}))$. Thus, using Lemma \ref{prehanzijiao}, we have 
\begin{equation}\nonumber
F(\mathcal{F}) = \mathcal{F}\cap F(E(\mathcal{F})) = \mathcal{F}\cap \Gamma_{V_F} (E(\mathcal{F})) = \Gamma_{V_F}(\mathcal{F}).
\end{equation}

Finally, we show the claimed equality for an object $\mathcal{F}$ in $QCoh(\mathfrak{X})$ without any assumption. We should notice that a coherent subsheaf $ \mathcal{G}\subset \mathcal{F}$ belongs to $F(\mathcal{F})$ if and only if the equality $F(\mathcal{G}) = \mathcal{G}$ holds. In fact, this equivalence is easily observed from the equality $F(\mathcal{G}) = \mathcal{G} \cap F(\mathcal{F})$ by Lemma \ref{prehanzijiao}. This equivalence is true for the section functor $\Gamma_{V_F}$ as well. So $\mathcal{G} \subset \mathcal{F}$ belongs to $\Gamma_{V_F}(\mathcal{F})$ if and only if $\Gamma_{V_F}(\mathcal{G})=\mathcal{G}$. Therefore, we see that $\mathcal{G} \subset F(\mathcal{F})$ if and only if $\mathcal{G} \subset \Gamma_{V_F}(\mathcal{F})$, and the proof is completed.
\end{proof}

\begin{proofA}
 $(1) \Rightarrow(2)$, $(2) \Rightarrow(3)$, $(3) \Rightarrow(1)$  are already proved respectively in Lemmas \ref{radicbaoneishe}(2), \ref{jieyingyupre} and \ref{jieying}(3).
\end{proofA}

Then we will also use the following functors. Suppose $V' \subset V$ is also a specialization-closed subset. Then, define $\Gamma_{V/V'} (\mathcal{F}) = \Gamma_V (\mathcal{F})/\Gamma_{V'} (\mathcal{F}).$

\begin{lemma}
Let $\mathfrak{X}$ be a Noetherian separated algebraic space over $S$, and $V' \subset V$ be specialization-closed subsets of $|\mathfrak{X}|$, then there is a right-derived functor $\mathbf{R}\Gamma_{V/V'}(-): \mathbf{D}(QCoh(\mathfrak{X})) \rightarrow \mathbf{D}(QCoh(\mathfrak{X}))$ such that the triangle $\mathbf{R}\Gamma_{V'}(\mathcal{F}^{\bullet}) \rightarrow \mathbf{R}\Gamma_{V}(\mathcal{F}^{\bullet}) \rightarrow \mathbf{R}\Gamma_{V/V'}(\mathcal{F}^{\bullet})\rightarrow \mathbf{R}\Gamma_{V'}(\mathcal{F}^{\bullet})[1]$ is distinguished.
\end{lemma}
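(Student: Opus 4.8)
\emph{Proof proposal.} The plan is to define $\mathbf{R}\Gamma_{V/V'}$ by applying the (non-left-exact) functor $\Gamma_{V/V'} = \Gamma_V(-)/\Gamma_{V'}(-)$ to $K$-injective resolutions and to read off the triangle from the evident degreewise short exact sequence of complexes. First, since $\mathfrak{X}$ is a Noetherian separated algebraic space, $QCoh(\mathfrak{X})$ is a Grothendieck category with enough injectives, and by Lemma \ref{daochudengjia} every object of $\mathbf{C}(QCoh(\mathfrak{X}))$ admits a ($K$-)injective resolution; equivalently, $\mathbf{D}(QCoh(\mathfrak{X}))$ is equivalent to the homotopy category of $K$-injective complexes. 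So for each complex $\mathcal{F}^{\bullet}$ I fix a quasi-isomorphism $\mathcal{F}^{\bullet} \to J^{\bullet}_{\mathcal{F}}$ with $J^{\bullet}_{\mathcal{F}}$ $K$-injective and with injective components. By Lemma \ref{jieying}, $\Gamma_V$ and $\Gamma_{V'}$ are left exact radical functors, so $\mathbf{R}\Gamma_V(\mathcal{F}^{\bullet}) := \Gamma_V(J^{\bullet}_{\mathcal{F}})$ and $\mathbf{R}\Gamma_{V'}(\mathcal{F}^{\bullet}) := \Gamma_{V'}(J^{\bullet}_{\mathcal{F}})$ are the usual exact right derived functors on $\mathbf{D}(QCoh(\mathfrak{X}))$.

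Next, because $V' \subset V$, the functor $\Gamma_{V'}$ is a subfunctor of $\Gamma_V$, so for any complex $I^{\bullet}$ in $QCoh(\mathfrak{X})$ the chain of inclusions $\Gamma_{V'}(I^n) \subset \Gamma_V(I^n) \subset I^n$ assembles into a short exact sequence of complexes
\begin{equation}\nonumber
0 \to \Gamma_{V'}(I^{\bullet}) \to \Gamma_V(I^{\bullet}) \to \Gamma_{V/V'}(I^{\bullet}) \to 0,
\end{equation}
whose third term is $\Gamma_{V/V'}(I^{\bullet})$ by the very definition $\Gamma_{V/V'}(\mathcal{F}) = \Gamma_V(\mathcal{F})/\Gamma_{V'}(\mathcal{F})$. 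Taking $I^{\bullet} = J^{\bullet}_{\mathcal{F}}$ and using that a short exact sequence of complexes induces a distinguished triangle in the derived category, I obtain the triangle
\begin{equation}\nonumber
\mathbf{R}\Gamma_{V'}(\mathcal{F}^{\bullet}) \to \mathbf{R}\Gamma_V(\mathcal{F}^{\bullet}) \to \Gamma_{V/V'}(J^{\bullet}_{\mathcal{F}}) \to \mathbf{R}\Gamma_{V'}(\mathcal{F}^{\bullet})[1],
\end{equation}
and I define $\mathbf{R}\Gamma_{V/V'}(\mathcal{F}^{\bullet}) := \Gamma_{V/V'}(J^{\bullet}_{\mathcal{F}})$. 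It then remains to check that this is well defined and functorial: any two $K$-injective resolutions of $\mathcal{F}^{\bullet}$ are homotopy equivalent, and the additive functors $\Gamma_V$, $\Gamma_{V'}$, $\Gamma_{V/V'}$ carry chain homotopies to chain homotopies, so $\Gamma_{V/V'}(J^{\bullet}_{\mathcal{F}})$ is independent (up to canonical isomorphism in $\mathbf{D}(QCoh(\mathfrak{X}))$) of the chosen resolution, and a morphism $\mathcal{F}^{\bullet} \to \mathcal{G}^{\bullet}$ lifts, uniquely up to homotopy, to a morphism of resolutions, providing the functorial structure. The two maps in the triangle are the images of the inclusion of functors $\Gamma_{V'} \hookrightarrow \Gamma_V$ and of the quotient $\Gamma_V \twoheadrightarrow \Gamma_{V/V'}$ evaluated on $K$-injective resolutions, so the triangle is natural in $\mathcal{F}^{\bullet}$.

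The one genuinely delicate point — and the reason the lemma needs a proof at all — is that $\Gamma_{V/V'}$ is \emph{not} left exact (it is a quotient of two left exact functors), so it has no right derived functor in the classical $F$-acyclic sense; the content is precisely that ``$\Gamma_{V/V'}$ applied to a $K$-injective resolution'' nonetheless yields a well-defined exact functor fitting into the displayed triangle. Accordingly, the part that requires care is not the formal triangle manipulation but the homotopy-invariance and functoriality of the construction, which in turn rests on the homotopy-uniqueness of $K$-injective resolutions supplied by Lemma \ref{daochudengjia}; once that is in hand, everything else is formal.
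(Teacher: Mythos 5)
Your proposal is correct and follows essentially the same route as the paper: resolve $\mathcal{F}^{\bullet}$ by a $K$-injective complex via Lemma \ref{daochudengjia}, use the degreewise short exact sequence $0 \to \Gamma_{V'}(\mathcal{J}^{\bullet}) \to \Gamma_{V}(\mathcal{J}^{\bullet}) \to \Gamma_{V/V'}(\mathcal{J}^{\bullet}) \to 0$, and pass to the associated distinguished triangle. Your additional remarks on homotopy-invariance and functoriality of the construction (which the paper leaves implicit) are accurate and only make the argument more complete.
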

\begin{proof}
By Lemma \ref{daochudengjia}, every quasi-coherent $\mathcal{O}_{\mathfrak{X}}$-complex has a $K$-injective resolution. It suffices to define $\mathbf{R}\Gamma_{V/V'}$ on $\mathbf{K}(\mathcal{I})$. Let $\mathcal{J}^{\bullet} \in \mathbf{K}(\mathcal{I})$ be quasi-isomorphic to $\mathcal{F}^{\bullet}$. Then the sequence 
\begin{equation}
0 \rightarrow \mathbf{R}\Gamma_{V'} (\mathcal{J}^{\bullet}) \rightarrow \mathbf{R}\Gamma_{V}(\mathcal{J}^{\bullet}) \rightarrow \mathbf{R}\Gamma_{V/V'} (\mathcal{J}^{\bullet}) \rightarrow 0
\end{equation}
is exact, and taking the corresponding distinguished triangle, we are done.
\end{proof}

\begin{lemma}\label{chouxiangyujieying}
Let $\mathcal{F}^{\bullet} \in \mathbf{D}(QCoh(\mathfrak{X}))$ and let $V$ be a specialization-closed subset of $|\mathfrak{X}|$. Then
\begin{itemize}
\item (1) $\mathcal{F}^{\bullet}$ belongs to $Im(\mathbf{R}\Gamma_V )$ if and only if
$\mathcal{F}^{\bullet}$ is quasi-isomorphic to an injective complex whose components are direct sums of $j_{\overline{x}*}E(\overline{x})$ with $x \in V$. 
\item (2) $\mathcal{F}^{\bullet}$ belongs to $Ker(\mathbf{R}\Gamma_V )$ if and only if
$\mathcal{F}^{\bullet}$ is quasi-isomorphic to an injective complex whose components are direct sums of $j_{\overline{x}*}E(\overline{x})$ with $x \in |\mathfrak{X}|-V$. 
\item (3) The natural embedding functor $i : Im(\mathbf{R}\Gamma_V) \rightarrow \mathbf{D}(QCoh(\mathfrak{X}))$ has a right adjoint $\rho : \mathbf{D}(QCoh(\mathfrak{X})) \rightarrow Im(\mathbf{R}\Gamma_V)$ and $\mathbf{R}\Gamma_V \cong i \circ \rho $. Hence, by Theorem \ref{miqi}, $(Im(\mathbf{R}\Gamma_V), Ker(\mathbf{R}\Gamma_V))$ is a \textit{stable $t$-structure} on $\mathbf{D}(QCoh(\mathfrak{X}))$.
\item (4) The $t$-structure $(Im(\mathbf{R}\Gamma_V), Ker(\mathbf{R}\Gamma_V))$ divides indecomposable injective quasi-coherent sheaves, by which we mean that each indecomposable injective quasi-coherent sheaf belongs to either $Im(\mathbf{R}\Gamma_V)$ or $Ker(\mathbf{R}\Gamma_V)$.
\end{itemize}
\end{lemma}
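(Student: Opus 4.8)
The plan is to compute $\mathbf{R}\Gamma_{V}$ explicitly on minimal $K$-injective resolutions, read off the shape of its image and kernel from the decomposition of injectives (Theorem \ref{neishefenjie}), and derive the stable $t$-structure in (3) from the fact that $\mathbf{R}\Gamma_{V}$ is an idempotent coreflection, via Theorem \ref{miqi}.

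\textit{Step 1: local computation.} By Lemma \ref{jieying}(3) the functor $\Gamma_{V}$ is a left exact radical functor, so Lemma \ref{radicbaoneishe} applies: $\Gamma_{V}$ preserves injectivity, and $\Gamma_{V}(j_{\overline{x}*}E(\overline{x}))$ is either $j_{\overline{x}*}E(\overline{x})$ or $0$. Since $x\in\mathrm{Supp}(j_{\overline{x}*}E(\overline{x}))\subseteq\overline{\{x\}}$ and $V$ is specialization-closed, it equals $j_{\overline{x}*}E(\overline{x})$ exactly when $x\in V$. The argument of Lemma \ref{radicbaoneishe}(2) also shows $\Gamma_{V}$ commutes with the coproducts occurring in indecomposable decompositions. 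Now fix $\mathcal{F}^{\bullet}$ and, using Lemma \ref{daochudengjia}, a minimal $K$-injective resolution $\mathcal{F}^{\bullet}\to\mathcal{J}^{\bullet}$, with $\mathcal{J}^{n}=\bigoplus_{x}(j_{\overline{x}*}E(\overline{x}))^{(\mu^{n}_{x})}$ (Theorem \ref{neishefenjie}). Then $\mathbf{R}\Gamma_{V}(\mathcal{F}^{\bullet})\simeq\Gamma_{V}(\mathcal{J}^{\bullet})$ is an injective complex with $\Gamma_{V}(\mathcal{J}^{\bullet})^{n}=\bigoplus_{x\in V}(j_{\overline{x}*}E(\overline{x}))^{(\mu^{n}_{x})}$; the canonical morphism $\phi(\mathcal{F}^{\bullet})\colon\mathbf{R}\Gamma_{V}(\mathcal{F}^{\bullet})\to\mathcal{F}^{\bullet}$ is represented by the inclusion $\Gamma_{V}(\mathcal{J}^{\bullet})\hookrightarrow\mathcal{J}^{\bullet}$, whose cone is represented by the injective complex $\mathcal{J}^{\bullet}/\Gamma_{V}(\mathcal{J}^{\bullet})$ with $n$-th term $\bigoplus_{x\notin V}(j_{\overline{x}*}E(\overline{x}))^{(\mu^{n}_{x})}$.

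\textit{Step 2: part (3), the forward halves of (1)--(2), and (4).} Since $\Gamma_{V}$ sends injectives to injectives, hence to $\Gamma_{V}$-acyclics, one has $\mathbf{R}\Gamma_{V}\circ\mathbf{R}\Gamma_{V}\cong\mathbf{R}(\Gamma_{V}\circ\Gamma_{V})=\mathbf{R}\Gamma_{V}$; thus $\mathbf{R}\Gamma_{V}$ is an idempotent triangulated functor and $\phi\colon\mathbf{R}\Gamma_{V}\to\mathbf{1}$ is the counit of an idempotent comonad, so $\mathbf{R}\Gamma_{V}(\phi)$ is invertible. A purely formal argument then shows that $Im(\mathbf{R}\Gamma_{V})=\{\mathcal{F}^{\bullet}:\phi(\mathcal{F}^{\bullet})\text{ invertible}\}$ is a triangulated subcategory (five lemma in triangles), that $\mathrm{Hom}_{\mathbf{D}}(\mathcal{A}^{\bullet},\mathcal{B}^{\bullet})=0$ whenever $\mathcal{A}^{\bullet}\in Im(\mathbf{R}\Gamma_{V})$ and $\mathcal{B}^{\bullet}\in Ker(\mathbf{R}\Gamma_{V})$ (from naturality of $\phi$ and invertibility of $\phi(\mathcal{A}^{\bullet})$), and hence that $i\colon Im(\mathbf{R}\Gamma_{V})\hookrightarrow\mathbf{D}(QCoh(\mathfrak{X}))$ has right adjoint $\rho=\mathbf{R}\Gamma_{V}$ with $\mathbf{R}\Gamma_{V}\cong i\circ\rho$. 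Theorem \ref{miqi} now yields (3), with $Im(\mathbf{R}\Gamma_{V})=Im(\delta)$, $Ker(\mathbf{R}\Gamma_{V})=Ker(\delta)$ for $\delta=\mathbf{R}\Gamma_{V}$, and a functorial triangle $\mathbf{R}\Gamma_{V}(\mathcal{F}^{\bullet})\xrightarrow{\phi}\mathcal{F}^{\bullet}\to D(\mathcal{F}^{\bullet})\to$ with $D(\mathcal{F}^{\bullet})\in Ker(\mathbf{R}\Gamma_{V})$. Combining with Step 1: if $\mathcal{F}^{\bullet}\in Im(\mathbf{R}\Gamma_{V})$ then $\mathcal{F}^{\bullet}\cong\Gamma_{V}(\mathcal{J}^{\bullet})$, an injective complex with components $\bigoplus_{x\in V}(\cdots)$; if $\mathcal{F}^{\bullet}\in Ker(\mathbf{R}\Gamma_{V})$ then $\mathcal{F}^{\bullet}\cong D(\mathcal{F}^{\bullet})\cong\mathcal{J}^{\bullet}/\Gamma_{V}(\mathcal{J}^{\bullet})$, an injective complex with components $\bigoplus_{x\notin V}(\cdots)$. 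This is the ``only if'' direction of (1) and (2). For (4): each indecomposable injective is $j_{\overline{x}*}E(\overline{x})$ for a unique $x$ (Theorem \ref{neishefenjie}(3)), it is its own $K$-injective resolution, and $\mathbf{R}\Gamma_{V}(j_{\overline{x}*}E(\overline{x}))=\Gamma_{V}(j_{\overline{x}*}E(\overline{x}))$ equals $j_{\overline{x}*}E(\overline{x})$ if $x\in V$ and $0$ otherwise, so it lies in $Im(\mathbf{R}\Gamma_{V})$ or $Ker(\mathbf{R}\Gamma_{V})$ accordingly.

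\textit{Step 3: the converse halves of (1)--(2), and the main obstacle.} Let $\mathcal{F}^{\bullet}$ be quasi-isomorphic to an injective complex $\mathcal{I}^{\bullet}$ whose components are sums of $j_{\overline{x}*}E(\overline{x})$ with $x\in V$ (resp.\ with $x\notin V$). By Step 1 it suffices to show that in the minimal $K$-injective resolution $\mathcal{J}^{\bullet}$ of $\mathcal{F}^{\bullet}$ no summand $j_{\overline{x}*}E(\overline{x})$ with $x\notin V$ (resp.\ with $x\in V$) occurs, for then $\Gamma_{V}(\mathcal{J}^{\bullet})=\mathcal{J}^{\bullet}\cong\mathcal{F}^{\bullet}$ (resp.\ $\Gamma_{V}(\mathcal{J}^{\bullet})=0$). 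This is checked on the strictly local rings $\mathcal{O}_{\mathfrak{X},\overline{x}}$: the multiplicity of $j_{\overline{x}*}E(\overline{x})$ in $\mathcal{J}^{n}$ equals the Bass number $\dim_{\kappa(\overline{x})}\mathrm{Ext}^{n}_{\mathcal{O}_{\mathfrak{X},\overline{x}}}(\kappa(\overline{x}),\mathcal{F}^{\bullet}_{\overline{x}})$, because minimality forces the complex $\mathrm{Hom}^{\bullet}(\kappa(\overline{x}),\mathcal{J}^{\bullet}_{\overline{x}})$ to have zero differential. For $x\notin V$ in the first case $\mathcal{F}^{\bullet}_{\overline{x}}\cong\mathcal{I}^{\bullet}_{\overline{x}}=0$ (since $\mathrm{Supp}(j_{\overline{y}*}E(\overline{y}))\subseteq\overline{\{y\}}\subseteq V$ for $y\in V$), so all these Bass numbers vanish. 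For $x\in V$ in the second case, the components of $\mathcal{I}^{\bullet}_{\overline{x}}$ are injective $\mathcal{O}_{\mathfrak{X},\overline{x}}$-modules with no $\mathfrak{m}_{\overline{x}}$-torsion; as $\mathcal{O}_{\mathfrak{X},\overline{x}}$ is a Noetherian local ring, hence of finite Krull dimension, $\Gamma_{\mathfrak{m}_{\overline{x}}}$ has finite cohomological dimension, so $\mathbf{R}\Gamma_{\mathfrak{m}_{\overline{x}}}(\mathcal{F}^{\bullet}_{\overline{x}})\cong\Gamma_{\mathfrak{m}_{\overline{x}}}(\mathcal{I}^{\bullet}_{\overline{x}})=0$, whence (as $\kappa(\overline{x})$ is $\mathfrak{m}_{\overline{x}}$-torsion) $\mathbf{R}\mathrm{Hom}_{\mathcal{O}_{\mathfrak{X},\overline{x}}}(\kappa(\overline{x}),\mathcal{F}^{\bullet}_{\overline{x}})\cong\mathbf{R}\mathrm{Hom}(\kappa(\overline{x}),\mathbf{R}\Gamma_{\mathfrak{m}_{\overline{x}}}\mathcal{F}^{\bullet}_{\overline{x}})=0$, so again all Bass numbers vanish. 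This completes (1) and (2). The delicate point throughout is Step 1 — that $\mathbf{R}\Gamma_{V}$ really is computed by applying $\Gamma_{V}$ to a minimal $K$-injective resolution, that localization is compatible with the formation of minimal $K$-injective resolutions, and the finite cohomological dimension used at the stalks; these rest on the theory of unbounded resolutions on Noetherian spaces (cf.\ \cite{Tarrio2000LocalizationResolutions}, \cite{Krause2005TheScheme}), and I expect the bookkeeping there, rather than any of the formal steps, to be the real work.
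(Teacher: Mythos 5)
Your Step 1 coincides with the paper's own starting point: both compute $\mathbf{R}\Gamma_V$ by applying $\Gamma_V$ to a ($K$-)injective resolution, using Lemma \ref{daochudengjia}, Lemma \ref{jieying}(3), Lemma \ref{radicbaoneishe} and Theorem \ref{neishefenjie} to see that $\Gamma_V(\mathcal{J}^{\bullet})$ is the subcomplex spanned by the summands $j_{\overline{x}*}E(\overline{x})$ with $x\in V$; this gives the forward halves of (1)--(2) and (4) exactly as in the paper. Where you genuinely diverge is in (3): the paper never passes through idempotency of $\mathbf{R}\Gamma_V$. It instead uses the vanishing $\mathrm{Hom}_{\mathcal{O}_{\mathfrak{X}}}(j_{\overline{x}*}E(\overline{x}), j_{\overline{y}*}E(\overline{y}))=0$ for $x\in V$, $y\in|\mathfrak{X}|-V$ to get $\mathrm{Hom}_{\mathbf{K}(\mathcal{I})}(\mathcal{J}^{\bullet}_1,\mathcal{J}^{\bullet}_2)=\mathrm{Hom}_{\mathbf{K}(\mathcal{I})}(\mathcal{J}^{\bullet}_1,\Gamma_V(\mathcal{J}^{\bullet}_2))$ for $\mathcal{J}^{\bullet}_1\in Im(\mathbf{R}\Gamma_V)$, which exhibits $\mathbf{R}\Gamma_V$ as the right adjoint of the inclusion directly, and then invokes Theorem \ref{miqi}. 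Your Step 3 (Bass numbers and stalkwise local cohomology for the converse halves of (1)--(2)) is material the paper leaves essentially implicit; in outline it is correct, the standard inputs being the ones you flag (minimality passes to stalks, multiplicities are Bass numbers, $\Gamma_{\mathfrak{m}_{\overline{x}}}$ has finite cohomological dimension), so your write-up is in this respect more complete than the paper's.

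The one step I would not accept as written is the justification of idempotency in Step 2: ``$\Gamma_V$ sends injectives to $\Gamma_V$-acyclics, hence $\mathbf{R}\Gamma_V\circ\mathbf{R}\Gamma_V\cong\mathbf{R}(\Gamma_V\circ\Gamma_V)$.'' In the unbounded derived category, a complex with $\Gamma_V$-acyclic terms does not automatically compute $\mathbf{R}\Gamma_V$, and $\Gamma_V(\mathcal{J}^{\bullet})$ has no evident reason to be $K$-injective: its levelwise complement is not a subcomplex, because there can be nonzero maps $j_{\overline{y}*}E(\overline{y})\rightarrow j_{\overline{x}*}E(\overline{x})$ with $y\notin V$ and $x\in V\cap\overline{\{y\}}$ (e.g.\ $y$ generic, $x$ closed), so $\Gamma_V(\mathcal{J}^{\bullet})$ is not a direct summand of $\mathcal{J}^{\bullet}$ in $\mathbf{K}(\mathcal{I})$. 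Since your entire proof of (3) funnels through $\phi$ being the counit of an idempotent comonad, this needs repair: either justify that unbounded complexes of injectives do compute $\mathbf{R}\Gamma_V$ (write $V$ as a filtered union of closed subsets $Z$, use the finite cohomological dimension of $\Gamma_Z$ and exactness of filtered colimits in $QCoh(\mathfrak{X})$ --- the same device you already use at the stalks), or bypass idempotency altogether and argue as the paper does, deducing the adjunction $i\dashv\rho$ with $\mathbf{R}\Gamma_V\cong i\circ\rho$ from the $\mathrm{Hom}$-vanishing between injectives indexed by $V$ and by its complement; idempotency is then a consequence rather than an input.
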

\begin{proof}
By Lemma \ref{daochudengjia}, every quasi-coherent $\mathcal{O}_{\mathfrak{X}}$-complex has a $K$-injective resolution.
For any injective complex $\mathcal{J}^{\bullet} \in \mathbf{K}(\mathcal{I})$, $\mathbf{R}\Gamma_V(\mathcal{J}^{\bullet}) = \Gamma_V(\mathcal{J}^{\bullet})$ is the subcomplex of $\mathcal{J}^{\bullet}$ consisting of injective objects supported in $V$. Hence every object of $Im(\mathbf{R}\Gamma_V)$ (resp. $Ker(\mathbf{R}\Gamma_V))$ is an injective complex whose components are direct sums of $j_{\overline{x}*}E(\overline{x})$ with $x \in V$ (resp. $x \in |\mathfrak{X}|-V$). In particular, if $x \in V$ (resp. $x \in |\mathfrak{X}|-V$), then $j_{\overline{x}*}E(\overline{x}) \in Im(\mathbf{R}\Gamma_V)$ (resp. $j_{\overline{x}*}E(\overline{x}) \in Ker(\mathbf{R}\Gamma_V)$). Since $Hom_{\mathcal{O}_{\mathfrak{X}}}(j_{\overline{x}*}E(\overline{x}), j_{\overline{y}*}E(\overline{y})) = 0$ for $x \in V$ and $y \in |\mathfrak{X}|-V$, we can see that $Hom_{\mathbf{K}(\mathcal{I})}(\mathcal{J}^{\bullet}_1,\mathcal{J}^{\bullet}_2) = Hom_{\mathbf{K}(\mathcal{I})}(\mathcal{J}^{\bullet}_1, \Gamma_V (\mathcal{J}^{\bullet}_2))$ for any $\mathcal{J}^{\bullet}_1 \in Im(\mathbf{R}\Gamma_V)$ and $\mathcal{J}^{\bullet}_2 \in \mathbf{K}(\mathcal{I})$. Hence it follows from the above equivalence that $\mathbf{R}\Gamma_V$ is a right adjoint of the natural embedding $i : Im(\mathbf{R}\Gamma_V) \rightarrow \mathbf{D}(QCoh(\mathfrak{X}))$.
\end{proof}

\section{Small supports}\label{Small supports}

\begin{definition}
The small support $supp(\mathcal{F}^{\bullet})$ of a complex $\mathcal{F}^{\bullet}$ of quasi-coherent sheaves on $\mathfrak{X}$ is defined as the set of points $x$ of $\mathfrak{X}$ satisfying $j_{\overline{x}*}\kappa(\overline{x}) \otimes^{\mathbf{L}} _{\mathcal{O}_{\mathfrak{X}}} \mathcal{F}^{\bullet} \not= 0 $ in $\mathbf{D}(QCoh(\mathfrak{X}))$.
\end{definition}

For a subset $W$ of $|\mathfrak{X}|$, we denote by $supp^{-1}(W)$ the subcategory of $\mathbf{D}(QCoh(\mathfrak{X}))$ consisting of all complexes $\mathcal{F}^{\bullet}$ of quasi-coherent sheaves on $\mathfrak{X}$ with $supp(\mathcal{F}^{\bullet}) \subset W$.

We define the subcategory $\mathcal{L}_W$ as the smallest localizing subcategory of $\mathbf{D}(QCoh(\mathfrak{X}))$ that contains the set  $\{j_{\overline{x}*}\kappa(\overline{x}): x \in W\}$. If $W=\{ x \}$, we will denote $\mathcal{L}_W$ simply by $\mathcal{L}_x$. Note that if $x\in W$, then $\mathcal{L}_x \subset \mathcal{L}_W$.

\begin{lemma}\label{shensheng}
Let $\mathcal{F}^{\bullet}$ be a complex that is quasi-isomorphic to an injective complex whose components are direct sums of $j_{\overline{x}*}E(\overline{x})$, then
$\mathcal{F}^{\bullet} \in \mathcal{L}_x.$
\end{lemma}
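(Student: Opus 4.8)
The claim is that any complex $\mathcal{F}^{\bullet}$ quasi-isomorphic to an injective complex $\mathcal{J}^{\bullet}$ all of whose components are direct sums of copies of the single indecomposable injective $j_{\overline{x}*}E(\overline{x})$ lies in the localizing subcategory $\mathcal{L}_x$ generated by $j_{\overline{x}*}\kappa(\overline{x})$. The first reduction I would make is to replace $\mathcal{F}^{\bullet}$ by $\mathcal{J}^{\bullet}$ itself, since $\mathcal{L}_x$ is closed under isomorphism in $\mathbf{D}(QCoh(\mathfrak{X}))$. Then, using Lemma \ref{shuangjieduan}, write $\mathcal{J}^{\bullet} = \mathrm{colim}_{n\geq 0}[\sigma_{\geq -n}(\tau_{\leq n+1}\mathcal{J}^{\bullet})]$, which by the homotopy colimit formalism (Lemma \ref{holim}) exhibits $\mathcal{J}^{\bullet}$ as a homotopy colimit of the bounded truncations $\mathcal{J}^{\bullet}_n := \sigma_{\geq -n}(\tau_{\leq n+1}\mathcal{J}^{\bullet})$. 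Since a homotopy colimit sits in a distinguished triangle $\bigoplus \mathcal{J}^{\bullet}_n \to \bigoplus \mathcal{J}^{\bullet}_n \to \mathcal{J}^{\bullet} \to$, and a localizing subcategory is closed under direct sums and cones, it suffices to show each $\mathcal{J}^{\bullet}_n \in \mathcal{L}_x$. One subtlety: the good truncation $\tau_{\leq n+1}$ may destroy componentwise injectivity, but the brutal truncation $\sigma_{\geq -n}$ after it still has most components a direct sum of copies of $j_{\overline{x}*}E(\overline{x})$ except possibly the two boundary terms — I would instead work with $\sigma_{\geq -n}\sigma_{\leq n}\mathcal{J}^{\bullet}$ (brutal on both sides) together with the exact triangles from Lemma \ref{jieduanzhenghe} relating these to $\mathcal{J}^{\bullet}$ so as to keep all components honestly direct sums of $j_{\overline{x}*}E(\overline{x})$; the mapping cone/telescope arguments still reduce the problem to bounded complexes.

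Having reduced to a bounded complex $\mathcal{I}^{\bullet}$ concentrated in degrees $[a,b]$ with each $\mathcal{I}^k$ a direct sum of copies of $j_{\overline{x}*}E(\overline{x})$, I would induct on the length $b-a$. The base case is a complex concentrated in a single degree, i.e. a shift of $\bigoplus_{i\in I} j_{\overline{x}*}E(\overline{x})$; so I need the single-degree case $\bigoplus_I j_{\overline{x}*}E(\overline{x}) \in \mathcal{L}_x$. For the inductive step, the stupid filtration gives a distinguished triangle $\sigma_{\geq a+1}\mathcal{I}^{\bullet} \to \mathcal{I}^{\bullet} \to \mathcal{I}^a[-a] \to$ (by Lemma \ref{jieduanzhenghe}), where $\sigma_{\geq a+1}\mathcal{I}^{\bullet}$ has shorter length and $\mathcal{I}^a[-a]$ is a shifted direct sum of copies of $j_{\overline{x}*}E(\overline{x})$; both are in $\mathcal{L}_x$ by induction and the single-degree case, hence so is $\mathcal{I}^{\bullet}$.

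The genuine heart of the argument is therefore the single-degree case: showing $j_{\overline{x}*}E(\overline{x}) \in \mathcal{L}_x$ (then arbitrary direct sums come for free since $\mathcal{L}_x$ is localizing). Here I would exploit the structure of $E(\overline{x})$ as an $\mathcal{O}_{\mathfrak{X},\overline{x}}$-module: it is the union of its submodules annihilated by powers of the maximal ideal, equivalently there is a filtration $0 = E_0 \subset E_1 \subset E_2 \subset \cdots$ with $\bigcup E_n = E(\overline{x})$ and each subquotient $E_n/E_{n-1}$ a (possibly infinite) direct sum of copies of $\kappa(\overline{x})$ — this is the standard fact that over a local Noetherian ring the injective hull of the residue field is an essential extension built from $\kappa$-layers, together with Matlis-type finiteness (each $E_n$ has finite length after localization, so the layers are finite direct sums of $\kappa(\overline{x})$). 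Pushing forward along $j_{\overline{x}}$ (which is exact on quasi-coherent sheaves since $j_{\overline{x}}$ is flat, or at least preserves the relevant exact sequences and direct sums — one uses that $j_{\overline{x}*}$ commutes with the filtered colimit here), I get $j_{\overline{x}*}E(\overline{x}) = \mathrm{colim}\, j_{\overline{x}*}E_n$ with each $j_{\overline{x}*}E_n$ an iterated extension of objects $j_{\overline{x}*}(\bigoplus \kappa(\overline{x})) = \bigoplus j_{\overline{x}*}\kappa(\overline{x})$, all of which lie in $\mathcal{L}_x$. Since $\mathcal{L}_x$ is triangulated and closed under direct sums, it is closed under extensions and under the countable homotopy colimit computing this union, so $j_{\overline{x}*}E(\overline{x}) \in \mathcal{L}_x$. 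The main obstacle I anticipate is the bookkeeping needed to make ``$E(\overline{x})$ is a colimit of finite-length modules with $\kappa(\overline{x})$-layers'' precise and to verify $j_{\overline{x}*}$ behaves well with respect to these colimits and extensions on an algebraic space; the triangulated reductions at the top are routine by comparison.
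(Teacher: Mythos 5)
There is a genuine gap, and it sits exactly in the step you dismiss as ``routine'': the reduction of the unbounded complex to bounded ones. The doubly brutal truncations $\sigma_{\geq -n}\sigma_{\leq n}\mathcal{J}^{\bullet}$ do not form a direct system whose homotopy colimit is $\mathcal{J}^{\bullet}$: the degreewise inclusion of the window $[-n,n]$ into the window $[-n-1,n+1]$ (or into $\mathcal{J}^{\bullet}$) is not a chain map, because the differential $d^{n}$ out of the top degree is killed in the source but not in the target. The natural maps involving $\sigma_{\leq n}$ go the other way (they are quotient maps, cf.\ Lemma \ref{jieduanzhenghe}), so in the direction of increasing cohomological degree $\mathcal{J}^{\bullet}$ is recovered from these bounded windows only as a homotopy \emph{limit}; and while a localizing subcategory is closed under coproducts, cones and hence homotopy colimits, it need not be closed under products or homotopy limits. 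The only valid hocolim presentation among the ones you cite is Lemma \ref{shuangjieduan}, $\mathcal{J}^{\bullet}=\mathrm{colim}_{n}\,\sigma_{\geq -n}(\tau_{\leq n+1}\mathcal{J}^{\bullet})$, but that reinstates the term $\mathrm{Ker}\,d^{\,n+1}$ in top degree, which is merely a subsheaf of a direct sum of copies of $j_{\overline{x}*}E(\overline{x})$ and is not covered by your base case. So the asserted reduction to bounded complexes whose components are honest direct sums of $j_{\overline{x}*}E(\overline{x})$ is not justified as written.

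The missing ingredient is precisely that an \emph{arbitrary subsheaf} of a direct sum of copies of $j_{\overline{x}*}E(\overline{x})$ (such as those kernels) lies in $\mathcal{L}_x$; this follows from the $\mathfrak{m}_{\overline{x}}$-power torsion filtration you already use for $E(\overline{x})$ itself, since every section is annihilated by some $\mathfrak{m}_{\overline{x}}^{i}$ and the successive quotients are $\kappa(\overline{x})$-vector spaces. But once you have that tool, it is simpler to skip truncations altogether and apply it to the whole unbounded complex, which is what the paper does: filter $\mathcal{F}^{\bullet}$ by the subcomplexes $\mathcal{F}^{\bullet}_i$ of sections killed by $\mathfrak{m}_{\overline{x}}^{i}$ (an exhaustive increasing filtration because the components are direct sums of $j_{\overline{x}*}E(\overline{x})$), observe that each subquotient $\mathcal{F}^{\bullet}_i/\mathcal{F}^{\bullet}_{i-1}$ is a complex of $\kappa(\overline{x})$-vector spaces after applying $j_{\overline{x}}^{-1}$, hence lies in $\mathcal{L}_x$, deduce $\mathcal{F}^{\bullet}_i\in\mathcal{L}_x$ by induction along the triangles, and conclude with Lemma \ref{holim} since only a colimit of subcomplexes (a homotopy colimit) is needed at the end. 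Your single-sheaf argument for $j_{\overline{x}*}E(\overline{x})\in\mathcal{L}_x$ is exactly this idea in the special case of a complex concentrated in one degree, so the core insight is present; the failure is in the global bookkeeping, not the local one. One further small point: exactness of $j_{\overline{x}*}$ and its compatibility with the filtered colimits should be justified by $j_{\overline{x}}$ being an affine morphism (affine source, separated target), not by flatness of $j_{\overline{x}}$, which controls the pullback rather than the pushforward.
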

\begin{proof}
Recall that every element of $E(\overline{x})$ is annihilated by $\mathfrak{m}_{\overline{x}}^n$ for some $n$ (see \cite{Lam2012LecturesRings}, Remark 3.79). Thus $\mathcal{F}^{\bullet}$ has a filtration
\begin{equation}
0=\mathcal{F}^{\bullet}_0 \subset \mathcal{F}^{\bullet}_1 \subset \mathcal{F}^{\bullet}_2 \subset \ldots \subset \mathcal{F}^{\bullet},
\end{equation}
where $\mathcal{F}^{\bullet}_i$ is the subcomplex of $\mathcal{F}^{\bullet}$ and $j_{\overline{x}}^{-1}\mathcal{F}^{\bullet}_i$ is annihilated by $\mathfrak{m}_{\overline{x}}^i$ for each $i$. Then, $j_{\overline{x}}^{-1}(\mathcal{F}^{\bullet}_i/\mathcal{F}^{\bullet}_{i-1})$ is a complex of vector spaces over $\kappa(\overline{x})$ since it is annihilated by $\mathfrak{m}_{\overline{x}}$. 
Now from the short exact sequences $0 \rightarrow \mathcal{F}^{\bullet}_{i-1} \rightarrow \mathcal{F}^{\bullet}_i \rightarrow \mathcal{F}^{\bullet}_i/\mathcal{F}^{\bullet}_{i-1} \rightarrow 0$ we get distinguished triangles
$\mathcal{F}^{\bullet}_{i-1} \rightarrow \mathcal{F}^{\bullet}_i \rightarrow \mathcal{F}^{\bullet}_i/\mathcal{F}^{\bullet}_{i-1} \rightarrow \mathcal{F}^{\bullet}_{i-1}[1].$ By induction, this implies that $\mathcal{F}^{\bullet}_i \in \mathcal{L}_x$ for all $i$. But since $\mathcal{F}^{\bullet} = colim \mathcal{F}^{\bullet}_i$, and by Lemma \ref{holim}, a localizing subcategory is closed under direct limits, we see that $\mathcal{F}^{\bullet}$ is in $\mathcal{L}_x$.
\end{proof}

\begin{lemma}\label{suppxingzhi}
The small support has the following properties:
\begin{itemize}
\item (1) $supp_{\mathfrak{X}}(j_{\overline{x}*}\kappa(\overline{x}))= \{x \}.$ 
\item (2) Let $\mathcal{F}^{\bullet} \rightarrow \mathcal{G}^{\bullet} \rightarrow\mathcal{H}^{\bullet} \rightarrow\mathcal{F}^{\bullet}[1]$  be a distinguished triangle in $\mathbf{D}(QCoh(\mathfrak{X}))$. Then one has the following inclusion relations:
\begin{equation}\nonumber
supp(\mathcal{F}^{\bullet}) \subset supp(\mathcal{G}^{\bullet}) \cup supp(\mathcal{H}^{\bullet}),
\end{equation}
\begin{equation}\nonumber
supp(\mathcal{G}^{\bullet}) \subset supp(\mathcal{F}^{\bullet}) \cup supp(\mathcal{H}^{\bullet}),
\end{equation}
\begin{equation}\nonumber
supp(\mathcal{H}^{\bullet}) \subset supp(\mathcal{G}^{\bullet}) \cup supp(\mathcal{F}^{\bullet}).
\end{equation}
\item (3) The equality
\begin{equation}\nonumber
supp(\bigoplus_{i\in \mathcal{I}}\mathcal{F}^{\bullet}_i) =\bigcup_{i\in \mathcal{I}} supp(\mathcal{F}^{\bullet}_i) 
\end{equation}
holds for any family $\{ \mathcal{F}^{\bullet}_i \}_{i\in \mathcal{I}}$ of complexes of quasi-coherent sheaves on $\mathfrak{X}$.

\item (4) $supp_{\mathfrak{X}}(\mathcal{F}^{\bullet} \otimes^{\mathbf{L}} _{\mathcal{O}_{\mathfrak{X}}} \mathcal{G}^{\bullet})= supp(\mathcal{F}^{\bullet}) \cap supp (\mathcal{G}^{\bullet})$ for $\mathcal{F}^{\bullet},\mathcal{G}^{\bullet} \in \mathbf{D}(QCoh(\mathfrak{X}))$.

\item (5) $supp^{-1}(W)$ is a localizing subcategory of $\mathbf{D}(QCoh(\mathfrak{X}))$.

\item (6) Let $\mathcal{X}$ be a localizing subcategory of $\mathbf{D}(QCoh(\mathfrak{X}))$ and let $\mathcal{F}^{\bullet}\in \mathcal{X}.$ If $supp(\mathcal{F}^{\bullet})=W,$ then $\mathcal{L}_W$ is a subcategory of $\mathcal{X}$.

\item (7) $\mathcal{L}_W \subset supp^{-1}(W)$.

\item (8) Let $\mathcal{F}^{\bullet}$ be a complex in $\mathbf{D}(QCoh(\mathfrak{X}))$
that is quasi-isomorphic to an injective complex whose components are direct sums of $j_{\overline{x}*}E(\overline{x})$ with $x \in W$, then
$\mathcal{F}^{\bullet} \in \mathcal{L}_W.$ In particular, $\mathcal{L}_{|\mathfrak{X}|} = supp^{-1}(|\mathfrak{X}|) = \mathbf{D}(QCoh(\mathfrak{X}))$.

\item (9) $supp_{\mathfrak{X}}(\mathcal{F}^{\bullet})\not= \emptyset$ for every nontrivial $\mathcal{F}^{\bullet} \in \mathbf{D}(QCoh(\mathfrak{X}))$.

\item (10) Let $\mathcal{F}^{\bullet}$ be a complex of quasi-coherent sheaves on $\mathfrak{X}$ and $\mathcal{F}^{\bullet} \rightarrow \mathcal{I}^{\bullet}$ a minimal $K$-injective resolution of $\mathcal{F}^{\bullet}$. Then we have
\begin{equation}\nonumber
supp(\mathcal{F}^{\bullet}) = \bigcup_{i\in \mathbb{Z}}Ass(\mathcal{I}^{i}) .
\end{equation}

\item (11) $supp^{-1}(W)$ and the full subcategory of complexes that are quasi-isomorphic to injective complexes whose components are direct sums of $j_{\overline{x}*}E(\overline{x})$ with $x \in W$, are the same category.

\item (12) $supp_{\mathfrak{X}}(j_{\overline{x}*}E(\overline{x}))= \{x \}.$

\end{itemize}
\end{lemma}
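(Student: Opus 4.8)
The twelve items are tightly interlocked, so the plan is to build everything on two self-contained inputs — item (10) (the description of the small support by a minimal $K$-injective resolution) and item (4) (the tensor-product formula) — then deduce the formal closure properties essentially for free, and finally attack (8) and (6), which are the real content.

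\textbf{Backbone: (10), and the computational cases (1), (9), (11), (12).} First I would record the stalk-wise description: since $-\otimes^{\mathbf L}_{\mathcal O_{\mathfrak X}}-$, $j_{\overline x*}$ and $(-)_{\overline x}$ are all compatible with passage to an \'etale chart (a scheme) and then to an affine, one gets $x\in supp(\mathcal F^{\bullet})$ iff $\kappa(\overline x)\otimes^{\mathbf L}_{\mathcal O_{\mathfrak X,\overline x}}\mathcal F^{\bullet}_{\overline x}\neq 0$, i.e. the small support agrees with Foxby's. Given a minimal $K$-injective resolution $\mathcal F^{\bullet}\to\mathcal I^{\bullet}$ (Lemma \ref{daochudengjia}), each $\mathcal I^{i}$ is injective, hence $\mathcal I^{i}=\bigoplus j_{\overline x*}E(\overline x)$ by Theorem \ref{neishefenjie}, and $(\mathcal I^{\bullet})_{\overline x}$ is a minimal injective resolution of $\mathcal F^{\bullet}_{\overline x}$ over $\mathcal O_{\mathfrak X,\overline x}$; minimality forces the differentials of $\mathrm{Hom}_{\mathcal O_{\mathfrak X,\overline x}}(\kappa(\overline x),(\mathcal I^{\bullet})_{\overline x})$ to vanish, so $x\in\bigcup_i Ass(\mathcal I^{i})$ iff $\mathrm{RHom}_{\mathcal O_{\mathfrak X,\overline x}}(\kappa(\overline x),\mathcal F^{\bullet}_{\overline x})\neq 0$. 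Then (10) reduces to the commutative-algebra fact that over a Noetherian local ring $(R,\mathfrak m,k)$ one has $k\otimes^{\mathbf L}_{R}M\neq 0\iff\mathrm{RHom}_{R}(k,M)\neq 0$ for $M\in\mathbf D(R)$ (Foxby), which I would cite or prove by comparing both vanishings with that of $K\otimes^{\mathbf L}_{R}M$ for the Koszul complex $K$ on a minimal generating set of $\mathfrak m$. Granting (10): for $j_{\overline x*}\kappa(\overline x)$ the minimal resolution of $R/\mathfrak m$ over $R=\mathcal O_{\mathfrak X,\overline x}$ involves only $E(R/\mathfrak m)$, since $(R/\mathfrak m)_{\mathfrak p}=0$ for $\mathfrak p\neq\mathfrak m$, so (1) follows with Proposition \ref{ass}; $j_{\overline x*}E(\overline x)$ is its own minimal resolution, giving (12) with Proposition \ref{ass}; a nonzero complex has a nonzero $\mathcal I^{i}$, whose $Ass$ is nonempty, giving (9); and (11) follows by combining (10) with Theorem \ref{neishefenjie}, Proposition \ref{ass} and uniqueness of minimal $K$-injective resolutions.

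\textbf{Formal properties: (2), (3), (5), (7); and the tensor formula (4).} Applying the exact functor $j_{\overline y*}\kappa(\overline y)\otimes^{\mathbf L}_{\mathcal O_{\mathfrak X}}-$ to a distinguished triangle yields a distinguished triangle, and if two of its vertices vanish so does the third; this gives the three inclusions in (2). The same functor commutes with coproducts and $\mathbf H^{i}(\bigoplus)=\bigoplus\mathbf H^{i}$, so a coproduct of complexes vanishes iff every summand does, which is (3). From (2), (3) and shift-invariance, $supp^{-1}(W)$ is triangulated and closed under coproducts, hence localizing — this is (5); and by (1) it contains every $j_{\overline x*}\kappa(\overline x)$ with $x\in W$, hence the smallest such localizing subcategory $\mathcal L_W$, which is (7). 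For (4) I would again reduce stalk-wise to: over a Noetherian local ring, $k\otimes^{\mathbf L}_{R}(M\otimes^{\mathbf L}_{R}N)\simeq(k\otimes^{\mathbf L}_{R}M)\otimes^{\mathbf L}_{k}(k\otimes^{\mathbf L}_{R}N)$, and over the field $k$ the cohomology of a derived tensor product is $\bigoplus_{p+q=n}\mathbf H^{p}\otimes_{k}\mathbf H^{q}$, which is nonzero iff both factors are nonzero (Foxby's formula).

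\textbf{The structural statement (8).} Lemma \ref{shensheng} already handles the case where every component of the injective complex involves one single point. To reduce to it, let $\mathcal I^{\bullet}$ be an injective complex representing $\mathcal F^{\bullet}$ with all components direct sums of $j_{\overline x*}E(\overline x)$, $x\in W$, and put $W'=\bigcup_i Ass(\mathcal I^{i})\subset W$. Since $|\mathfrak X|$ is Noetherian its closed subsets satisfy the descending chain condition, so the specialization order on $W'$ is well-founded; this lets me build an increasing continuous chain $(V_\alpha)$ of specialization-closed subsets with $V_0=\emptyset$, $V_{\alpha+1}=V_\alpha\cup\overline{\{x_\alpha\}}$ for a specialization-minimal $x_\alpha\in W'\setminus V_\alpha$, and $\bigcup_\alpha V_\alpha\supset W'$, so that $(V_{\alpha+1}\setminus V_\alpha)\cap W'=\{x_\alpha\}$. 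Using Theorem \ref{neishefenjie}, Proposition \ref{ass} and the vanishing of $\mathrm{Hom}$ between $j_{\overline x*}E(\overline x)$ and $j_{\overline y*}E(\overline y)$ unless $y$ specializes $x$, one identifies $\Gamma_{V_\alpha}(\mathcal I^{\bullet})$ with the subcomplex spanned by the summands indexed by $V_\alpha\cap W'$; these form an ascending chain of subcomplexes with union $\mathcal I^{\bullet}$, and each successive quotient $\Gamma_{V_{\alpha+1}}(\mathcal I^{\bullet})/\Gamma_{V_\alpha}(\mathcal I^{\bullet})$ is an injective complex whose components are direct sums of the single sheaf $j_{\overline{x_\alpha}*}E(\overline{x_\alpha})$, hence lies in $\mathcal L_{x_\alpha}\subset\mathcal L_W$ by Lemma \ref{shensheng}. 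The short exact sequences of subcomplexes give distinguished triangles, and the transfinite colimit along the chain is realised by the telescope triangle $\bigoplus\to\bigoplus\to\mathcal I^{\bullet}\to[1]$; transfinite induction, using that $\mathcal L_W$ is closed under coproducts and cones and under the homotopy-colimit pattern of Lemma \ref{holim}, then places every $\Gamma_{V_\alpha}(\mathcal I^{\bullet})$ and finally $\mathcal I^{\bullet}$ in $\mathcal L_W$. Taking $W=|\mathfrak X|$ and combining with Lemma \ref{daochudengjia} and Theorem \ref{neishefenjie} gives $\mathcal L_{|\mathfrak X|}=\mathbf D(QCoh(\mathfrak X))$, and $supp^{-1}(|\mathfrak X|)=\mathbf D(QCoh(\mathfrak X))$ is trivial.

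\textbf{The remaining statement (6), and the main obstacle.} Given $\mathcal F^{\bullet}\in\mathcal X$ with $supp(\mathcal F^{\bullet})=W$ and $x\in W$, the goal is $j_{\overline x*}\kappa(\overline x)\in\mathcal X$. The plan is: first show $j_{\overline x*}\kappa(\overline x)\otimes^{\mathbf L}_{\mathcal O_{\mathfrak X}}\mathcal F^{\bullet}\in\mathcal X$ — i.e. that $\mathcal X$ is stable under the derived tensor product with a fixed object, which I would derive from the existence of a generator of $\mathbf D(QCoh(\mathfrak X))$; by (4) this object is nonzero with small support exactly $\{x\}$. It then remains to show that a nonzero $\mathcal N^{\bullet}$ with $supp(\mathcal N^{\bullet})=\{x\}$ has $j_{\overline x*}\kappa(\overline x)$ in the localizing subcategory it generates: localize at $\overline x$, where $supp(\mathcal N^{\bullet}_{\overline x})$ is the closed point of $\mathrm{Spec}(\mathcal O_{\mathfrak X,\overline x})$, apply the Neeman--B\"okstedt classification over $\mathcal O_{\mathfrak X,\overline x}$ to get $\kappa(\overline x)$ into the localizing subcategory generated by $\mathcal N^{\bullet}_{\overline x}$, and push forward along the exact, coproduct-preserving functor $j_{\overline x*}$. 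I expect (8) and (6) to be the main obstacles: in (8), identifying $\Gamma_{V_\alpha}(\mathcal I^{\bullet})$ with the literal partial-sum subcomplex and verifying that the successive quotients are genuinely injective complexes with single-point components is the delicate point; in (6), the tensor-ideal-type stability of $\mathcal X$ together with the passage ``support $\{x\}$ $\Rightarrow$ $j_{\overline x*}\kappa(\overline x)$ in the generated subcategory'' both rely on controlling localizing subcategories under the flat localization $j_{\overline x}$ and under $-\otimes^{\mathbf L}_{\mathcal O_{\mathfrak X}}(j_{\overline x*}\mathcal O)$, and this is where I anticipate the heaviest work.
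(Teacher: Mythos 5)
Most of your plan coincides with the paper's proof or is a harmless variant. Items (1)--(5) and (7) are handled by the same formal arguments (your derivation of (7) from (5) and (1) is cleaner than the paper's, which routes it through (6), and it avoids the issue discussed below). Your (8) is essentially the paper's argument: both exhaust $|\mathfrak{X}|$ by specialization-closed subsets, adding one point at a time so that the subquotient $\Gamma_{V\cup\{x\}/V}$ of the injective complex is a single-point complex covered by Lemma \ref{shensheng}; you organize this as a transfinite induction where the paper uses Zorn's lemma on the family of $Y$ with $\mathbf{R}\Gamma_Y(\mathcal{F}^{\bullet})\in\mathcal{L}_W$, and in both versions the limit/union step needs the same assertion that a localizing subcategory is closed under (possibly transfinite) increasing unions of subcomplexes. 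The genuinely different route is (10): you prove it directly and stalkwise from the Foxby--Koszul equivalence $k\otimes^{\mathbf L}_R M\neq 0 \Leftrightarrow \mathrm{RHom}_R(k,M)\neq 0$ plus minimality, and then read off (1), (9), (11), (12); the paper instead deduces (9) from (8) and then (10) from (9) via the local cohomology functors $\mathbf{R}\Gamma_{\overline{\{x\}}/\overline{\{x\}}-\{x\}}$ of Section \ref{Local cohomology}. Your ordering is self-contained and decouples (10) from the structural statement (8), which is a genuine advantage; you should just make sure the stalkwise identification of a minimal $K$-injective resolution (and, for (11), the decomposition of an arbitrary complex of injectives into a minimal one plus a contractible one) is justified in the setting of Lemma \ref{daochudengjia}.

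The genuine gap is in (6). You propose to show that an arbitrary localizing subcategory $\mathcal{X}$ is stable under $-\otimes^{\mathbf L}_{\mathcal{O}_{\mathfrak{X}}} j_{\overline{x}*}\kappa(\overline{x})$ ``from the existence of a generator of $\mathbf{D}(QCoh(\mathfrak{X}))$''. That argument only works when the tensor unit generates: for fixed $\mathcal{F}^{\bullet}\in\mathcal{X}$ the class $\{\mathcal{G}^{\bullet}:\mathcal{F}^{\bullet}\otimes^{\mathbf L}\mathcal{G}^{\bullet}\in\mathcal{X}\}$ is localizing and is only known to contain $\mathcal{O}_{\mathfrak{X}}$; a compact generator of $\mathbf{D}(QCoh(\mathfrak{X}))$ need not belong to it, and for non-affine $\mathfrak{X}$ the localizing subcategory generated by $\mathcal{O}_{\mathfrak{X}}$ is in general proper. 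Indeed the unconditional statement (6) already fails for $\mathfrak{X}=\mathbb{P}^1$: let $\mathcal{X}$ be the localizing subcategory generated by $\mathcal{O}$; then $supp(\mathcal{O})=|\mathbb{P}^1|$, but $\kappa(x)\notin\mathcal{X}$, since $\mathcal{O}(-1)$ lies in the right orthogonal of $\mathcal{X}$ (its cohomology vanishes) while $\mathrm{Ext}^1(\kappa(x),\mathcal{O}(-1))\neq 0$. So tensor-stability is precisely the rigidity hypothesis and cannot be conjured from a generator; note the paper's own one-line proof of (6) silently makes the same move. The repair is to prove (6) for rigid $\mathcal{X}$, which is all that is used downstream (Lemma \ref{xiaojieyinliya} assumes rigidity). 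Once rigidity gives $j_{\overline{x}*}\kappa(\overline{x})\otimes^{\mathbf L}\mathcal{F}^{\bullet}\in\mathcal{X}$, your further detour through Neeman--B\"okstedt at the stalk and pushforward along $j_{\overline{x}}$ is unnecessary: by your own item (4) this tensor is a nonzero direct sum of shifts of $j_{\overline{x}*}\kappa(\overline{x})$, and a localizing subcategory is closed under direct summands, so $j_{\overline{x}*}\kappa(\overline{x})\in\mathcal{X}$ immediately.
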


\begin{proof}
(1) Let $x, y$ be points of $\mathfrak{X}$. Then we easily see that there are isomorphisms
\begin{equation}\nonumber
(j_{\overline{x}*}\kappa(\overline{x}) \otimes^{\mathbf{L}} _{\mathcal{O}_{\mathfrak{X}}} j_{\overline{y}*}\kappa(\overline{y}))_{\overline{x}}\cong
\kappa(\overline{x}) \otimes^{\mathbf{L}} _{\mathcal{O}_{\mathfrak{X},\overline{x}}} (j_{\overline{y}*}\kappa(\overline{y}))_{\overline{x}} 
\end{equation}
and
\begin{equation}\nonumber
(j_{\overline{x}*}\kappa(\overline{x}) \otimes^{\mathbf{L}} _{\mathcal{O}_{\mathfrak{X}}} j_{\overline{y}*}\kappa(\overline{y}))_{\overline{y}}
\cong (j_{\overline{x}*}\kappa(\overline{x}))_{\overline{y}} \otimes^{\mathbf{L}} _{\mathcal{O}_{\mathfrak{X},\overline{y}}} \kappa(\overline{y}).
\end{equation}
Therefore the complex $j_{\overline{x}*}\kappa(\overline{x}) \otimes^{\mathbf{L}} _{\mathcal{O}_{\mathfrak{X}}} j_{\overline{y}*}\kappa(\overline{y})$ is nonzero if and only if $x = y$.

(2) Let $x$ be a point in $supp(\mathcal{F}^{\bullet})$. Then $j_{\overline{x}*}\kappa(\overline{x}) \otimes^{\mathbf{L}} _{\mathcal{O}_{\mathfrak{X}}} \mathcal{F}^{\bullet} \not= 0 $. There is a distinguished triangle
\begin{equation}\nonumber
j_{\overline{x}*}\kappa(\overline{x}) \otimes^{\mathbf{L}} _{\mathcal{O}_{\mathfrak{X}}}\mathcal{F}^{\bullet} \rightarrow j_{\overline{x}*}\kappa(\overline{x}) \otimes^{\mathbf{L}} _{\mathcal{O}_{\mathfrak{X}}}\mathcal{G}^{\bullet} \rightarrow j_{\overline{x}*}\kappa(\overline{x}) \otimes^{\mathbf{L}} _{\mathcal{O}_{\mathfrak{X}}}\mathcal{H}^{\bullet} \rightarrow j_{\overline{x}*}\kappa(\overline{x}) \otimes^{\mathbf{L}} _{\mathcal{O}_{\mathfrak{X}}}\mathcal{F}^{\bullet}[1]
\end{equation}
which says that either $j_{\overline{x}*}\kappa(\overline{x}) \otimes^{\mathbf{L}} _{\mathcal{O}_{\mathfrak{X}}} \mathcal{G}^{\bullet}$ or $j_{\overline{x}*}\kappa(\overline{x}) \otimes^{\mathbf{L}} _{\mathcal{O}_{\mathfrak{X}}} \mathcal{H}^{\bullet}$ is nonzero. Thus $x$ is in the union of $supp(\mathcal{G}^{\bullet})$ and $supp(\mathcal{H}^{\bullet})$. The other inclusion relations are similarly obtained.

(3) One has $j_{\overline{x}*}\kappa(\overline{x}) \otimes^{\mathbf{L}} _{\mathcal{O}_{\mathfrak{X}}}(\bigoplus_{i\in \mathcal{I}}\mathcal{F}^{\bullet}_i) \cong \bigoplus_{i\in \mathcal{I}}  (j_{\overline{x}*}\kappa(\overline{x}) \otimes^{\mathbf{L}} _{\mathcal{O}_{\mathfrak{X}}} \mathcal{F}^{\bullet}_i)$ for
$x\in |\mathfrak{X}|$. Hence $j_{\overline{x}*}\kappa(\overline{x}) \otimes^{\mathbf{L}} _{\mathcal{O}_{\mathfrak{X}}}(\bigoplus_{i\in \mathcal{I}}\mathcal{F}^{\bullet}_i)$ is nonzero if and only if $j_{\overline{x}*}\kappa(\overline{x}) \otimes^{\mathbf{L}} _{\mathcal{O}_{\mathfrak{X}}} \mathcal{F}^{\bullet}_i$ is nonzero for some $i\in \mathcal{I}$.

(4) Notice that either $\mathcal{F}^{\bullet} \otimes^{\mathbf{L}} _{\mathcal{O}_{\mathfrak{X}}}j_{\overline{x}*}\kappa(\overline{x})=0$ or $\mathcal{G}^{\bullet} \otimes^{\mathbf{L}} _{\mathcal{O}_{\mathfrak{X}}}j_{\overline{x}*}\kappa(\overline{x})=0$ implies $\mathcal{F}^{\bullet}\otimes^{\mathbf{L}} _{\mathcal{O}_{\mathfrak{X}}}\mathcal{G}^{\bullet} \otimes^{\mathbf{L}} _{\mathcal{O}_{\mathfrak{X}}}j_{\overline{x}*}\kappa(\overline{x})=0$. Conversely, take $x \in supp(\mathcal{F}^{\bullet}) \cap supp (\mathcal{G}^{\bullet})$. Then $\mathcal{F}^{\bullet}\otimes^{\mathbf{L}} _{\mathcal{O}_{\mathfrak{X}}}\mathcal{G}^{\bullet} \otimes^{\mathbf{L}} _{\mathcal{O}_{\mathfrak{X}}}j_{\overline{x}*}\kappa(\overline{x}) = \mathcal{F}^{\bullet}\otimes^{\mathbf{L}} _{\mathcal{O}_{\mathfrak{X}}}(\bigoplus_{i} j_{\overline{x}*}\kappa(\overline{x})[i])= \bigoplus_{j} j_{\overline{x}*}\kappa(\overline{x})[j]$ which is nontrivial.

(5) This conclusion was immediately obtained by (2) and (3).

(6) The complex $j_{\overline{x}*}\kappa(\overline{x}) \otimes^{\mathbf{L}} _{\mathcal{O}_{\mathfrak{X}}} \mathcal{F}^{\bullet}$ is a direct sum of shifts of $j_{\overline{x}*}\kappa(\overline{x})$, hence if $x\in W,$ then $j_{\overline{x}*}\kappa(\overline{x})\in \mathcal{X},$ i.e. $\mathcal{L}_W$ is a subcategory of $\mathcal{X}$.

(7) This conclusion was immediately obtained by (6).

(8) (See also \cite{Neeman1992TheDR} Lemma 2.10 for an affine scheme $\mathfrak{X}$) Suppose $\mathcal{F}^{\bullet}$ is a complex in $\mathbf{D}(QCoh(\mathfrak{X}))$
that is quasi-isomorphic to an injective complex whose components are direct sums of $j_{\overline{x}*}E(\overline{x})$ with $x \in W$. Let $\mathbb{S}$ be the set of all specialization-closed subsets $Y$ such that $\mathbf{R}\Gamma_{Y}(\mathcal{F}^{\bullet})\subset \mathcal{L}_W.$ As localizing subcategories are closed under direct limits, $\mathbb{S}$ must be closed under the formation of increasing unions. Hence, by Zorn's Lemma, $\mathbb{S}$ contains a maximal element $Y$. We assert $Y = |\mathfrak{X}|$.

Suppose $Y \not= |\mathfrak{X}|$. Because $\mathfrak{X}$ is Noetherian, $|\mathfrak{X}| - Y$ contains an element $x$, such that
\begin{equation}\nonumber
\{x \}  = \overline{\{x \}} \cap (|\mathfrak{X}|-Y).
\end{equation}
But now we have
\begin{equation}\nonumber
\mathbf{R}\Gamma_{{Y\cup \{x\}}/Y}(\mathcal{F}^{\bullet})=\mathbf{R}\Gamma_{\overline{\{x \}}/{\overline{\{x \}}- \{x\}}}(\mathcal{F}^{\bullet})
\end{equation}
and $\mathbf{R}\Gamma_{\overline{\{x \}}/{\overline{\{x \}}- \{x\}}}(\mathcal{F}^{\bullet})\in \mathcal{L}_x \subset \mathcal{L}_W$ by Lemma \ref{shensheng}. Thus we deduce easily that $\mathbf{R}\Gamma_{{Y\cup \{x\}}}(\mathcal{F}^{\bullet}) \in \mathcal{L}_W,$ and this is a contradiction to the maximality of $Y$.

(9) If $supp_{\mathfrak{X}}(\mathcal{F}^{\bullet})= \emptyset,$ then by (8) we have $\mathcal{F}^{\bullet} = \mathcal{F}^{\bullet} \otimes^{\mathbf{L}} _{\mathcal{O}_{\mathfrak{X}}}\mathcal{O}_{\mathfrak{X}}=0.$

(10) (See also \cite{Krause2008ThickRings} Proposition 5.1 for an affine scheme $\mathfrak{X}$) Fix a point $x$. We know that 
\begin{equation}\nonumber
\mathcal{I}^{\bullet}\otimes^{\mathbf{L}} _{\mathcal{O}_{\mathfrak{X}}}j_{\overline{x}*}\kappa(\overline{x}) \cong  \mathbf{R}\Gamma_{\overline{\{x \}}/{\overline{\{x \}}- \{x\}}}(\mathcal{I}^{\bullet})\otimes^{\mathbf{L}} _{\mathcal{O}_{\mathfrak{X}}}j_{\overline{x}*}\kappa(\overline{x}).
\end{equation}

Suppose first that $\mathcal{I}^{\bullet}\otimes^{\mathbf{L}} _{\mathcal{O}_{\mathfrak{X}}}j_{\overline{x}*}\kappa(\overline{x}) \not=0$. Then $\mathbf{R}\Gamma_{\overline{\{x \}}/{\overline{\{x \}}- \{x\}}}(\mathcal{I}^{\bullet})\not=0$  and therefore $x \in Ass(\mathcal{I}^i)$ for some $i$.

Now suppose that $\mathcal{I}^{\bullet}\otimes^{\mathbf{L}} _{\mathcal{O}_{\mathfrak{X}}}j_{\overline{x}*}\kappa(\overline{x}) =0$. Then $\mathbf{R}\Gamma_{\overline{\{x \}}/{\overline{\{x \}}- \{x\}}}(\mathcal{I}^{\bullet})=0$ by (9), that is, $\mathcal{I}^{\bullet}\otimes^{\mathbf{L}} _{\mathcal{O}_{\mathfrak{X}}}j_{\overline{x}*}\kappa(\overline{x})$ is acyclic. We want to conclude that 
\begin{equation}\nonumber
\mathbf{R}\Gamma_{\overline{\{x \}}/{\overline{\{x \}}- \{x\}}}(\mathcal{I}^{i})=\mathbf{R}\Gamma_{\overline{\{x \}}/{\overline{\{x \}}- \{x\}}}(\mathcal{I}^{\bullet})^i=0
\end{equation}
for all $i$. Here we need to use the minimality of $\mathcal{I}^{\bullet}$. Also, $\mathbf{R}\Gamma_{\overline{\{x \}}/{\overline{\{x \}}- \{x\}}}(\mathcal{I}^{\bullet})$ is a $K$-injective complex of injective $\mathcal{O}_{\mathfrak{X}}$-modules. Thus $\mathbf{R}\Gamma_{\overline{\{x \}}/{\overline{\{x \}}- \{x\}}}(\mathcal{I}^{\bullet})=0$ in $\mathbf{D}(QCoh(\mathfrak{X}))$ implies $x\not\in Ass(\mathcal{I}^i)$ for all $i$, because $\mathcal{I}^{\bullet}$ is minimal.

(11) This conclusion was immediately obtained by (10).

(12) This conclusion was immediately obtained by (10).
\end{proof}

\section{Proof of Main Theorems}
\label{zhuyaozhangjie1}

\begin{theorem}
\label{miao}
Let $S$ be a scheme contained in $\mathbf{Sch}_{fppf}$. Let $\mathfrak{X}$ be a Noetherian separated algebraic space over $S$. Let $W \subset |\mathfrak{X}|$ be a subset. Then the following categories are the same:
\begin{itemize}
\item (1) $supp^{-1}(W).$
\item (2) $\mathcal{L}_W.$
\item (3) The full subcategory of complexes that are quasi-isomorphic to injective complexes whose components are direct sums of $j_{\overline{x}*}E(\overline{x})$ with $x \in W$. 
\end{itemize}
\end{theorem}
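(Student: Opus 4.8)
The plan is to prove Theorem \ref{miao} by establishing the chain of inclusions
\[
(2) \subset (1), \qquad (1) \subset (3), \qquad (3) \subset (2),
\]
each of which has essentially already been isolated in Lemma \ref{suppxingzhi}. First, the inclusion $\mathcal{L}_W \subset supp^{-1}(W)$ is precisely Lemma \ref{suppxingzhi}(7), which in turn comes from parts (2), (3) and (6): $supp^{-1}(W)$ is a localizing subcategory (part (5)) containing all the generators $j_{\overline{x}*}\kappa(\overline{x})$ with $x \in W$ (since $supp(j_{\overline{x}*}\kappa(\overline{x})) = \{x\}$ by part (1)), so it contains the smallest such localizing subcategory $\mathcal{L}_W$. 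Second, the equality of $supp^{-1}(W)$ with category (3) is exactly Lemma \ref{suppxingzhi}(11), which follows from the minimal $K$-injective resolution description of the small support in part (10): if $\mathcal{F}^{\bullet} \to \mathcal{I}^{\bullet}$ is a minimal $K$-injective resolution, then $supp(\mathcal{F}^{\bullet}) = \bigcup_i Ass(\mathcal{I}^i)$, and by Theorem \ref{neishefenjie} each $\mathcal{I}^i$ is a direct sum of indecomposable injectives $j_{\overline{x}*}E(\overline{x})$, which contributes the point $x$ to the union; hence $supp(\mathcal{F}^{\bullet}) \subset W$ if and only if every indecomposable summand appearing in $\mathcal{I}^{\bullet}$ is of the form $j_{\overline{x}*}E(\overline{x})$ with $x \in W$.

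Third, the inclusion of category (3) into $\mathcal{L}_W$ is Lemma \ref{suppxingzhi}(8): given $\mathcal{F}^{\bullet}$ quasi-isomorphic to an injective complex whose components are direct sums of $j_{\overline{x}*}E(\overline{x})$ with $x \in W$, one runs the Zorn's lemma argument over the poset of specialization-closed subsets $Y$ with $\mathbf{R}\Gamma_Y(\mathcal{F}^{\bullet}) \in \mathcal{L}_W$, using Noetherianity of $\mathfrak{X}$ to find, for any proper such $Y$, a point $x \in |\mathfrak{X}| - Y$ with $\{x\} = \overline{\{x\}} \cap (|\mathfrak{X}| - Y)$, and then noting that $\mathbf{R}\Gamma_{(Y \cup \{x\})/Y}(\mathcal{F}^{\bullet}) = \mathbf{R}\Gamma_{\overline{\{x\}}/(\overline{\{x\}} - \{x\})}(\mathcal{F}^{\bullet}) \in \mathcal{L}_x \subset \mathcal{L}_W$ by Lemma \ref{shensheng}, contradicting maximality; thus $Y = |\mathfrak{X}|$ and $\mathcal{F}^{\bullet} = \mathbf{R}\Gamma_{|\mathfrak{X}|}(\mathcal{F}^{\bullet}) \in \mathcal{L}_W$. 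Chaining these three inclusions gives $\mathcal{L}_W \subset supp^{-1}(W) = (3) \subset \mathcal{L}_W$, so all three categories coincide.

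Since every ingredient has been proved in Section \ref{Small supports}, the proof of Theorem \ref{miao} itself is a short bookkeeping argument. The genuinely substantive step — and the one I would flag as the main obstacle — is Lemma \ref{suppxingzhi}(8), i.e. showing that $K$-injective complexes built from the $j_{\overline{x}*}E(\overline{x})$ with $x \in W$ already lie in the localizing subcategory generated by the residue fields $j_{\overline{x}*}\kappa(\overline{x})$; the transfinite dévissage via local cohomology functors $\mathbf{R}\Gamma_{V/V'}$ and the reduction in Lemma \ref{shensheng} (filtering $E(\overline{x})$ by the kernels of powers of $\mathfrak{m}_{\overline{x}}$ and recognizing the subquotients as complexes of $\kappa(\overline{x})$-vector spaces, hence direct sums of shifts of $j_{\overline{x}*}\kappa(\overline{x})$) is where all the real work sits. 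Everything else here is formal manipulation of triangulated categories and the structure theory of injective quasi-coherent sheaves on a locally Noetherian algebraic space.
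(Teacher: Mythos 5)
Your proposal is correct and follows essentially the same route as the paper: the paper's proof of Theorem \ref{miao} is exactly the three-step chain $(2)\subset(1)$ by Lemma \ref{suppxingzhi}(7), $(1)=(3)$ by Lemma \ref{suppxingzhi}(11), and $(3)\subset(2)$ by Lemma \ref{suppxingzhi}(8), with the real work delegated to those lemmas just as you describe. Your identification of part (8) (the Zorn's lemma d\'evissage via $\mathbf{R}\Gamma_{V/V'}$ together with Lemma \ref{shensheng}) as the substantive ingredient matches the structure of the paper's argument.
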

\begin{proof}
$(2)\subset (1).$ By Lemma \ref{suppxingzhi}(7).

$(1)= (3).$ By lemma \ref{suppxingzhi}(11).

$(3)\subset (2).$ By lemma \ref{suppxingzhi}(8).
\end{proof}

\begin{definition}
A localizing subcategory $\mathcal{X}$ is called rigid if for every $\mathcal{F}^{\bullet} \in \mathcal{X}$ and $\mathcal{G}^{\bullet} \in \mathbf{D}(QCoh(\mathfrak{X}))$, we have that $\mathcal{F}^{\bullet} \otimes^{\mathbf{L}} _{\mathcal{O}_{\mathfrak{X}}} \mathcal{G}^{\bullet} \in \mathcal{X}$.
\end{definition}

\begin{lemma}\label{xiaojieyinli}
For every subset $W \subset |\mathfrak{X}|$, the localizing subcategory $\mathcal{L}_W$ is rigid. 
\end{lemma}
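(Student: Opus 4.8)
\textbf{Proof proposal for Lemma \ref{xiaojieyinli}.}
The plan is to reduce the claim to the combination of Lemma \ref{suppxingzhi}(4) and Theorem \ref{miao}. By Theorem \ref{miao} we know that $\mathcal{L}_W = supp^{-1}(W)$, so it suffices to show that $supp^{-1}(W)$ is closed under $-\otimes^{\mathbf{L}}_{\mathcal{O}_{\mathfrak{X}}} \mathcal{G}^{\bullet}$ for an arbitrary $\mathcal{G}^{\bullet} \in \mathbf{D}(QCoh(\mathfrak{X}))$. This is where the key input enters: by Lemma \ref{suppxingzhi}(4), for any $\mathcal{F}^{\bullet}, \mathcal{G}^{\bullet}$ one has $supp(\mathcal{F}^{\bullet} \otimes^{\mathbf{L}}_{\mathcal{O}_{\mathfrak{X}}} \mathcal{G}^{\bullet}) = supp(\mathcal{F}^{\bullet}) \cap supp(\mathcal{G}^{\bullet})$.

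First I would take $\mathcal{F}^{\bullet} \in \mathcal{L}_W$ and $\mathcal{G}^{\bullet} \in \mathbf{D}(QCoh(\mathfrak{X}))$ arbitrary. By Theorem \ref{miao}, $\mathcal{F}^{\bullet} \in supp^{-1}(W)$, i.e. $supp(\mathcal{F}^{\bullet}) \subset W$. Then by Lemma \ref{suppxingzhi}(4),
\begin{equation}\nonumber
supp(\mathcal{F}^{\bullet} \otimes^{\mathbf{L}}_{\mathcal{O}_{\mathfrak{X}}} \mathcal{G}^{\bullet}) = supp(\mathcal{F}^{\bullet}) \cap supp(\mathcal{G}^{\bullet}) \subset supp(\mathcal{F}^{\bullet}) \subset W,
\end{equation}
so $\mathcal{F}^{\bullet} \otimes^{\mathbf{L}}_{\mathcal{O}_{\mathfrak{X}}} \mathcal{G}^{\bullet} \in supp^{-1}(W)$. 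Applying Theorem \ref{miao} once more identifies $supp^{-1}(W)$ with $\mathcal{L}_W$, which gives $\mathcal{F}^{\bullet} \otimes^{\mathbf{L}}_{\mathcal{O}_{\mathfrak{X}}} \mathcal{G}^{\bullet} \in \mathcal{L}_W$, as required. Since $\mathcal{L}_W$ is a localizing subcategory by construction, this establishes that it is rigid.

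There is essentially no obstacle here once Theorem \ref{miao} and Lemma \ref{suppxingzhi}(4) are in hand; the only subtlety worth double-checking is that the derived tensor product $\mathcal{F}^{\bullet} \otimes^{\mathbf{L}}_{\mathcal{O}_{\mathfrak{X}}} \mathcal{G}^{\bullet}$ is well-defined as an object of $\mathbf{D}(QCoh(\mathfrak{X}))$ for the Noetherian separated algebraic space $\mathfrak{X}$, which is implicit in the setup of Section \ref{Small supports} (it is used already in the definition of the small support and throughout Lemma \ref{suppxingzhi}). The genuinely hard work has been front-loaded into the proof of Lemma \ref{suppxingzhi}(4) (the ``$\cap$'' formula for small supports of derived tensor products) and into Theorem \ref{miao}; given those, rigidity of $\mathcal{L}_W$ is a one-line consequence.
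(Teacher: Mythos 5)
Your argument is correct and is essentially the paper's own proof spelled out: the paper simply cites Lemma \ref{suppxingzhi}(4), which (together with the identification $\mathcal{L}_W = supp^{-1}(W)$ from Theorem \ref{miao}, proved just before) yields rigidity exactly as you describe. No gaps; your version just makes the implicit use of Theorem \ref{miao} explicit.
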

\begin{proof}
By lemma \ref{suppxingzhi}(4).
\end{proof}

\begin{lemma}\label{xiaojieyinliya}
Let $\mathcal{X}$ be a rigid localizing subcategory of $\mathbf{D}(QCoh(\mathfrak{X}))$, and let $supp(\mathcal{X})=W$, then $\mathcal{X} = \mathcal{L}_W$ . 
\end{lemma}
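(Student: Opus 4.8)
The plan is to prove the two inclusions $\mathcal{X}\subset\mathcal{L}_W$ and $\mathcal{L}_W\subset\mathcal{X}$ separately, where throughout $W=supp(\mathcal{X})$ is read as $\bigcup_{\mathcal{F}^{\bullet}\in\mathcal{X}}supp(\mathcal{F}^{\bullet})$.

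For $\mathcal{X}\subset\mathcal{L}_W$, I would simply observe that, by the very definition of $W$, every $\mathcal{F}^{\bullet}\in\mathcal{X}$ satisfies $supp(\mathcal{F}^{\bullet})\subset W$, so $\mathcal{F}^{\bullet}\in supp^{-1}(W)$; by Theorem \ref{miao} (the equality of categories $(1)$ and $(2)$) we have $supp^{-1}(W)=\mathcal{L}_W$, hence $\mathcal{F}^{\bullet}\in\mathcal{L}_W$. This step uses nothing about rigidity.

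For the reverse inclusion $\mathcal{L}_W\subset\mathcal{X}$, since $\mathcal{L}_W$ is by construction the smallest localizing subcategory containing $\{j_{\overline{x}*}\kappa(\overline{x}):x\in W\}$, it suffices to show $j_{\overline{x}*}\kappa(\overline{x})\in\mathcal{X}$ for each $x\in W$. Fix such an $x$ and choose, using $W=supp(\mathcal{X})$, some $\mathcal{G}^{\bullet}\in\mathcal{X}$ with $x\in supp(\mathcal{G}^{\bullet})$, i.e. $j_{\overline{x}*}\kappa(\overline{x})\otimes^{\mathbf{L}}_{\mathcal{O}_{\mathfrak{X}}}\mathcal{G}^{\bullet}\neq 0$. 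Here rigidity of $\mathcal{X}$ is exactly what is needed: it guarantees $j_{\overline{x}*}\kappa(\overline{x})\otimes^{\mathbf{L}}_{\mathcal{O}_{\mathfrak{X}}}\mathcal{G}^{\bullet}\in\mathcal{X}$. As noted in the proof of Lemma \ref{suppxingzhi}(6), this complex is a nonzero direct sum of shifts of $j_{\overline{x}*}\kappa(\overline{x})$, so some $j_{\overline{x}*}\kappa(\overline{x})[n]$ occurs as a direct summand of it; since a localizing subcategory is closed under shifts and direct summands, $j_{\overline{x}*}\kappa(\overline{x})\in\mathcal{X}$. Equivalently, one may just apply Lemma \ref{suppxingzhi}(6) to each $\mathcal{G}^{\bullet}\in\mathcal{X}$, obtaining $\mathcal{L}_{supp(\mathcal{G}^{\bullet})}\subset\mathcal{X}$ for all such $\mathcal{G}^{\bullet}$, and then take the union over $\mathcal{G}^{\bullet}$ to conclude $\mathcal{L}_W\subset\mathcal{X}$.

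The whole argument is essentially a bookkeeping consequence of Theorem \ref{miao} and Lemma \ref{suppxingzhi}, so I do not anticipate a real obstacle. The only points that demand attention are: confirming that ``$supp(\mathcal{X})$'' is interpreted as the union of the small supports of the objects of $\mathcal{X}$ (so that every $x\in W$ is genuinely witnessed by some $\mathcal{G}^{\bullet}\in\mathcal{X}$), and making sure that rigidity is invoked precisely where it is needed, namely to land the tensor product $j_{\overline{x}*}\kappa(\overline{x})\otimes^{\mathbf{L}}_{\mathcal{O}_{\mathfrak{X}}}\mathcal{G}^{\bullet}$ back inside $\mathcal{X}$ in the second inclusion.
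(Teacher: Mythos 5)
Your proposal is correct and follows essentially the same route as the paper's proof: the inclusion $\mathcal{X}\subset supp^{-1}(W)=\mathcal{L}_W$ via Theorem \ref{miao}, and the reverse inclusion by tensoring a witness $\mathcal{G}^{\bullet}\in\mathcal{X}$ with $j_{\overline{x}*}\kappa(\overline{x})$, using rigidity to stay in $\mathcal{X}$ and the fact that the resulting nonzero complex is a direct sum of shifts of $j_{\overline{x}*}\kappa(\overline{x})$. Your version is in fact slightly more explicit than the paper's about where rigidity and closure under summands enter, but the argument is the same.
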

\begin{proof}
It is obvious that $\mathcal{X}$ is contained in $\mathcal{L}_W=supp^{-1}(supp(\mathcal{X}))$. 

If $x\in W$, then there exists a complex $\mathcal{G}^{\bullet} \in \mathcal{X}$, such that $x\in supp(\mathcal{G}^{\bullet})$, i.e. $\mathcal{G}^{\bullet}\otimes^{\mathbf{L}} _{\mathcal{O}_{\mathfrak{X}}}j_{\overline{x}*}\kappa(\overline{x})\not=0$. The complex $\mathcal{G}^{\bullet}\otimes^{\mathbf{L}} _{\mathcal{O}_{\mathfrak{X}}}j_{\overline{x}*}\kappa(\overline{x})$ is a direct sum of shifts of $j_{\overline{x}*}\kappa(\overline{x})$, hence if $x\in W,$ then $j_{\overline{x}*}\kappa(\overline{x})\in \mathcal{X},$ i.e. $\mathcal{L}_W \subset \mathcal{X}$.
\end{proof}

\begin{theorem}\label{youyixiaojie}
One has maps
\begin{eqnarray*}
 \xymatrix{
  \mathbb{LS} \ar[rr]_{supp^{-1}} &  &  \mathbb{SX} \ar@< 2pt>[ll]_{supp} 
   }\\
    \end{eqnarray*}   
The map $supp$ is an inclusion-preserving bijection and $supp^{-1}$ is its inverse map.
\end{theorem}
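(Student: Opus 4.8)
The plan is to assemble the statement from the results already in hand, checking that both maps are well defined, inclusion-preserving, and mutually inverse. First I would verify that $supp^{-1}$ actually lands in $\mathbb{LS}$: by Lemma \ref{suppxingzhi}(5) the subcategory $supp^{-1}(W)$ is localizing, by Theorem \ref{miao} it coincides with $\mathcal{L}_W$, and by Lemma \ref{xiaojieyinli} the latter is rigid, so $supp^{-1}(W) \in \mathbb{LS}$. That $supp$ lands in $\mathbb{SX}$ is automatic, since $supp(\mathcal{X}) = \bigcup_{\mathcal{F}^{\bullet} \in \mathcal{X}} supp(\mathcal{F}^{\bullet})$ is a subset of $|\mathfrak{X}|$. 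Both maps are visibly inclusion-preserving: $W_1 \subset W_2$ gives $supp^{-1}(W_1) \subset supp^{-1}(W_2)$ directly from the definition, and $\mathcal{X}_1 \subset \mathcal{X}_2$ gives $supp(\mathcal{X}_1) \subset supp(\mathcal{X}_2)$.

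Next I would show $supp \circ supp^{-1}$ is the identity on $\mathbb{SX}$. Fix $W \subset |\mathfrak{X}|$. Every $\mathcal{F}^{\bullet} \in supp^{-1}(W)$ satisfies $supp(\mathcal{F}^{\bullet}) \subset W$ by definition, so $supp(supp^{-1}(W)) \subset W$. Conversely, for $x \in W$ the object $j_{\overline{x}*}\kappa(\overline{x})$ lies in $\mathcal{L}_W = supp^{-1}(W)$ and has $supp(j_{\overline{x}*}\kappa(\overline{x})) = \{x\}$ by Lemma \ref{suppxingzhi}(1); hence $W \subset supp(supp^{-1}(W))$, and equality follows.

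Then I would show $supp^{-1} \circ supp$ is the identity on $\mathbb{LS}$, which is precisely the content of Lemma \ref{xiaojieyinliya}: for a rigid localizing $\mathcal{X}$ with $supp(\mathcal{X}) = W$, the inclusion $\mathcal{X} \subset supp^{-1}(W) = \mathcal{L}_W$ is immediate, while for the reverse inclusion one picks, for each $x \in W$, some $\mathcal{G}^{\bullet} \in \mathcal{X}$ with $x \in supp(\mathcal{G}^{\bullet})$; then by rigidity $\mathcal{G}^{\bullet} \otimes^{\mathbf{L}}_{\mathcal{O}_{\mathfrak{X}}} j_{\overline{x}*}\kappa(\overline{x})$, a nonzero direct sum of shifts of $j_{\overline{x}*}\kappa(\overline{x})$, lies in $\mathcal{X}$, which forces $j_{\overline{x}*}\kappa(\overline{x}) \in \mathcal{X}$ and hence $\mathcal{L}_W \subset \mathcal{X}$. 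Combining the two composite identities yields that $supp$ is a bijection with inverse $supp^{-1}$.

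The theorem is therefore a short formal consequence of the preceding machinery; the genuine weight rests in Lemma \ref{suppxingzhi}(8) (the Zorn's-lemma argument underlying $(3) \subset (2)$ of Theorem \ref{miao}) and in the rigidity hypothesis. I expect the only delicate point to be the role of rigidity in the injectivity of $supp$: without closure under $-\otimes^{\mathbf{L}}_{\mathcal{O}_{\mathfrak{X}}}\mathcal{G}^{\bullet}$ one cannot extract $j_{\overline{x}*}\kappa(\overline{x})$ from an arbitrary object whose support meets $\{x\}$, and distinct localizing subcategories could then share the same small support. The well-definedness check that $supp^{-1}(W)$ is itself rigid (so that the map really targets $\mathbb{LS}$) is the other place where one must invoke Theorem \ref{miao} rather than argue directly.
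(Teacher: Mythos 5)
Your proposal is correct and follows essentially the same route as the paper: well-definedness via Theorem \ref{miao} and Lemma \ref{xiaojieyinli}, the composite $supp^{-1}\circ supp$ handled by Lemma \ref{xiaojieyinliya}, and the composite $supp\circ supp^{-1}$ by exhibiting, for each $x\in W$, an object of $supp^{-1}(W)$ with small support exactly $\{x\}$. The only (immaterial) difference is that you use $j_{\overline{x}*}\kappa(\overline{x})$ and Lemma \ref{suppxingzhi}(1) for that last step, whereas the paper uses $j_{\overline{x}*}E(\overline{x})$ and Lemma \ref{suppxingzhi}(12).
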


\begin{proof}
By Lemma \ref{xiaojieyinli} the map $supp^{-1}$ is well-defined. Let $\mathcal{X}$ be a rigid localizing subcategory of $\mathbf{D}(QCoh(\mathfrak{X}))$. By Lemma \ref{xiaojieyinliya} we have $\mathcal{X} = \mathcal{L}_W,$ where $W=supp(\mathcal{X})$. Therefore $supp^{-1}(supp(\mathcal{X}))= supp^{-1}(W) = \mathcal{L}_W =\mathcal{X}$. Thus we conclude that the composite map $supp^{-1} \circ supp$ is the identity map.

Let $W$ be a subset of $|\mathfrak{X}|$. It is obvious that $supp(supp^{-1} (W))$ is contained in $W$. For $x \in W$ we have $supp (j_{\overline{x}*}E(\overline{x}))=\{x\} \subset W$ by Lemma \ref{suppxingzhi}(12). This implies that $W$ is contained in $supp(supp^{-1} (W))$, and we conclude that the composite map $supp \circ supp^{-1} $ is the identity map.
\end{proof}

\begin{definition}
Given a quasi-coherent sheaf $\mathcal{F}$,
\begin{equation}\nonumber
0 \rightarrow \mathcal{F} \rightarrow E^0 ( \mathcal{F} ) \xrightarrow{d^0} E^1 ( \mathcal{F} ) \xrightarrow{d^1} E^2 ( \mathcal{F} ) \rightarrow \ldots
\end{equation}
will be the minimal injective resolution of $\mathcal{F}$. 
We say that a subcategory $\mathcal{X}$ of $QCoh(\mathfrak{X})$ is $E$-stable provided that a sheaf $\mathcal{F}$ is in $\mathcal{X}$ if and only if so is $E^i(\mathcal{F})$ for every $i\geq 0$.
\end{definition}

For a subcategory $\mathcal{X}$ of $QCoh(\mathfrak{X})$, we denote by $supp(\mathcal{X})$ the set of points $x$ of $|\mathfrak{X}|$ such that $x \in supp (\mathcal{F})$ for some $\mathcal{F} \in \mathcal{X}$. 

For a subcategory $\mathcal{X}$ of $\mathbf{D}(QCoh(\mathfrak{X}))$, we denote by $\mathcal{X}_0$ the subcategory of $QCoh(\mathfrak{X})$ consisting of all quasi-coherent $\mathcal{O}_{\mathfrak{X}}$-sheaves $\mathcal{F}$ with $ \ldots \rightarrow 0 \rightarrow \mathcal{F} \rightarrow 0 \rightarrow 0 \rightarrow \ldots \in \mathcal{X}$.

\begin{proposition}\label{xiaoqinqin}
Let $W$ be a subset of $|\mathfrak{X}|$, and put $\mathcal{X} = (supp^{-1}(W))_0$.
\begin{itemize}
\item (1) $\mathcal{X}$ is closed under direct sums and summands.
\item (2) $\mathcal{X}$ is $E$-stable.
\end{itemize}
\end{proposition}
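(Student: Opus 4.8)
The plan is to unwind the definition: $\mathcal{X}=(supp^{-1}(W))_0$ is precisely the class of those quasi-coherent sheaves $\mathcal{F}$ whose small support $supp(\mathcal{F})$ is contained in $W$. Both assertions then become statements about how $supp$ of a sheaf interacts with direct sums, direct summands, and minimal injective resolutions, and every ingredient we need is already recorded in Lemma \ref{suppxingzhi}.

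For part (1), I would argue directly from the formal properties of the small support. Given a family $\{\mathcal{F}_i\}_{i\in\mathcal{I}}$ in $\mathcal{X}$, Lemma \ref{suppxingzhi}(3) gives $supp(\bigoplus_i\mathcal{F}_i)=\bigcup_i supp(\mathcal{F}_i)\subset W$, so $\bigoplus_i\mathcal{F}_i\in\mathcal{X}$; and if $\mathcal{F}=\mathcal{G}\oplus\mathcal{H}$ lies in $\mathcal{X}$, the inclusion $supp(\mathcal{G})\subset supp(\mathcal{G})\cup supp(\mathcal{H})=supp(\mathcal{F})\subset W$ (again Lemma \ref{suppxingzhi}(3), or Lemma \ref{suppxingzhi}(2) applied to a split triangle) forces $\mathcal{G}\in\mathcal{X}$. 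One can also phrase this more abstractly: $supp^{-1}(W)$ is a localizing subcategory of $\mathbf{D}(QCoh(\mathfrak{X}))$ by Lemma \ref{suppxingzhi}(5), hence closed under arbitrary coproducts and under direct summands, and since coproducts and summands of complexes concentrated in degree $0$ are again quasi-isomorphic to complexes concentrated in degree $0$, these closure properties descend to $\mathcal{X}$.

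For part (2), the engine is Lemma \ref{suppxingzhi}(10). Viewed as a complex in degree $0$, $\mathcal{F}$ admits the minimal injective resolution $0\to\mathcal{F}\to E^0(\mathcal{F})\to E^1(\mathcal{F})\to\cdots$; this is a bounded-below complex of injective objects, hence $K$-injective, and the minimality of the injective resolution is exactly the essentiality condition in the definition of a minimal $K$-injective resolution, since for each $i$ the object $E^i(\mathcal{F})$ is an injective hull of the kernel of its outgoing differential. So Lemma \ref{suppxingzhi}(10) applies and gives $supp(\mathcal{F})=\bigcup_{i\ge 0}Ass(E^i(\mathcal{F}))$. Next I would compute $supp(E^i(\mathcal{F}))$ for each $i$: since $E^i(\mathcal{F})$ is an injective quasi-coherent sheaf, Theorem \ref{neishefenjie} writes it as $\bigoplus_j j_{\overline{x_j}*}E(\overline{x_j})$, and combining $supp(j_{\overline{x}*}E(\overline{x}))=\{x\}$ (Lemma \ref{suppxingzhi}(12)) with additivity of small support (Lemma \ref{suppxingzhi}(3)) and $Ass_{\mathfrak{X}}(j_{\overline{x}*}E(\overline{x}))=\{x\}$ (Proposition \ref{ass}) gives $supp(E^i(\mathcal{F}))=\{x_j\}_j=Ass(E^i(\mathcal{F}))$. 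Therefore $supp(\mathcal{F})=\bigcup_{i\ge 0}supp(E^i(\mathcal{F}))$, and from this one identity $E$-stability follows in both directions: if $supp(\mathcal{F})\subset W$ then $supp(E^i(\mathcal{F}))\subset supp(\mathcal{F})\subset W$ for every $i$, while if $supp(E^i(\mathcal{F}))\subset W$ for all $i$ then $supp(\mathcal{F})$, being their union, is contained in $W$.

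I expect the only real point of friction to be the bookkeeping in part (2): checking that the minimal injective resolution of a sheaf genuinely qualifies as a minimal $K$-injective resolution in the sense required by Lemma \ref{suppxingzhi}(10), and that the small support of an injective quasi-coherent sheaf coincides with its set of associated points. Neither is deep, but both lean on the structure theory of injective quasi-coherent sheaves over a locally Noetherian algebraic space (Theorem \ref{neishefenjie}) and its compatibility with small support (Lemma \ref{suppxingzhi}(12)); with those at hand, the rest is a purely formal consequence of the additivity and triangle behaviour of $supp$ assembled in Lemma \ref{suppxingzhi}.
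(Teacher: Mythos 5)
Your proof is correct and takes essentially the same route as the paper: part (1) follows from the additivity of the small support (Lemma \ref{suppxingzhi}(3)), and part (2) from the identity $supp(\mathcal{F})=\bigcup_{i}Ass(E^{i}(\mathcal{F}))$ of Lemma \ref{suppxingzhi}(10) combined with the identification of $supp$ with $Ass$ on injective sheaves. The additional details you supply (that the minimal injective resolution is a minimal $K$-injective resolution, and the computation of $supp(E^{i}(\mathcal{F}))$ via the indecomposable decomposition of Theorem \ref{neishefenjie}) merely make explicit steps the paper leaves implicit.
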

\begin{proof}
(1) We observe by Lemma \ref{suppxingzhi}(3) that $\mathcal{X}$ is closed under direct sums and summands. 

(2)Fix a sheaf $\mathcal{F}\in QCoh(\mathfrak{X})$. According to Lemma \ref{suppxingzhi}(10), we have
\begin{equation}
\begin{aligned}
\mathcal{F} \in \mathcal{X} &\Leftrightarrow supp(\mathcal{F})\subset W \\
&\Leftrightarrow AssE^{i}(\mathcal{F}) \subset W \ for \ all \ i\geq 0 \\
&\Leftrightarrow suppE^{i}(\mathcal{F}) \subset W \ for \ all \ i\geq 0 \\
&\Leftrightarrow E^{i}(\mathcal{F}) \in \mathcal{X} \ for \ all \ i\geq 0. \\
\end{aligned}
\end{equation}
Hence $\mathcal{X}$ is $E$-stable.
\end{proof}

\begin{theorem}\label{yuzhouxiaojie}
One has maps
\begin{eqnarray*}
 \xymatrix{
    \mathbb{SX}  \ar[rr]_{supp} &  &  \mathbb{ES} \ar@< 2pt>[ll]_{(supp^{-1}(-))_0}
   }\\
    \end{eqnarray*}   
The map $supp$ is an inclusion-preserving bijection and $(supp^{-1}(-))_0$ is its inverse map.
\end{theorem}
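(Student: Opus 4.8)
The plan is to show that the assignment $W \mapsto (supp^{-1}(W))_0$ and the assignment $\mathcal{X} \mapsto supp(\mathcal{X})$ are mutually inverse and inclusion-preserving. Well-definedness of the first is exactly Proposition \ref{xiaoqinqin}, which says $(supp^{-1}(W))_0$ is $E$-stable and closed under direct sums and summands; well-definedness of the second is immediate from the definition of $supp(\mathcal{X})$. Both are visibly inclusion-preserving: $W_1 \subset W_2$ forces $supp^{-1}(W_1) \subset supp^{-1}(W_2)$ and hence $(supp^{-1}(W_1))_0 \subset (supp^{-1}(W_2))_0$, while $\mathcal{X}_1 \subset \mathcal{X}_2$ trivially gives $supp(\mathcal{X}_1) \subset supp(\mathcal{X}_2)$. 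So everything reduces to the two composition identities.

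For $supp\bigl((supp^{-1}(W))_0\bigr) = W$ the inclusion $\subset$ is immediate, since every $\mathcal{F} \in (supp^{-1}(W))_0$ satisfies $supp(\mathcal{F}) \subset W$ by definition. For $\supset$, given $x \in W$ one exhibits a witness sheaf in $(supp^{-1}(W))_0$ whose small support contains $x$; the natural choice is $j_{\overline{x}*}E(\overline{x})$ (or $j_{\overline{x}*}\kappa(\overline{x})$), which by Lemma \ref{suppxingzhi}(12) (resp. (1)) has small support $\{x\} \subset W$, hence lies in $supp^{-1}(W)$ as a complex concentrated in degree $0$, and $x$ lies in its small support. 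Thus $W \subset supp\bigl((supp^{-1}(W))_0\bigr)$.

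For $(supp^{-1}(supp(\mathcal{X})))_0 = \mathcal{X}$, put $W = supp(\mathcal{X})$. The inclusion $\mathcal{X} \subset (supp^{-1}(W))_0$ is formal: any $\mathcal{F} \in \mathcal{X}$ has $supp(\mathcal{F}) \subset supp(\mathcal{X}) = W$ directly from the definition of $supp(\mathcal{X})$. The reverse inclusion is the crux, and this is where $E$-stability and closure under direct sums and summands are essential. First I would prove the key claim that $j_{\overline{x}*}E(\overline{x}) \in \mathcal{X}$ for every $x \in W$: choosing $\mathcal{G} \in \mathcal{X}$ with $x \in supp(\mathcal{G})$, Lemma \ref{suppxingzhi}(10) applied to the minimal injective resolution of $\mathcal{G}$ gives $x \in Ass(E^i(\mathcal{G}))$ for some $i$; then $E^i(\mathcal{G}) \in \mathcal{X}$ by $E$-stability, and by the indecomposable decomposition of injective quasi-coherent sheaves (Theorem \ref{neishefenjie}), together with $Ass_{\mathfrak{X}}(j_{\overline{y}*}E(\overline{y})) = \{y\}$ (Proposition \ref{ass}) and additivity of $Ass$ over direct sums, the sheaf $j_{\overline{x}*}E(\overline{x})$ occurs as a direct summand of $E^i(\mathcal{G})$, so $j_{\overline{x}*}E(\overline{x}) \in \mathcal{X}$ by closure under summands. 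Now take any $\mathcal{F} \in (supp^{-1}(W))_0$, i.e. with $supp(\mathcal{F}) \subset W$; applying Lemma \ref{suppxingzhi}(10) to the minimal injective resolution $E^{\bullet}(\mathcal{F})$ (a minimal $K$-injective complex) gives $Ass(E^i(\mathcal{F})) \subset supp(\mathcal{F}) \subset W$ for all $i$, so each $E^i(\mathcal{F})$ is a direct sum of sheaves $j_{\overline{x}*}E(\overline{x})$ with $x \in W$, whence $E^i(\mathcal{F}) \in \mathcal{X}$ by the claim and closure under direct sums, and finally $\mathcal{F} \in \mathcal{X}$ by $E$-stability. This yields $(supp^{-1}(W))_0 \subset \mathcal{X}$ and finishes the argument, so the two maps are mutually inverse inclusion-preserving bijections.

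I expect the only genuinely delicate step to be extracting $j_{\overline{x}*}E(\overline{x})$ as a direct summand of $E^i(\mathcal{G})$ from the information $x \in Ass(E^i(\mathcal{G}))$: this rests on reading off the indecomposable decomposition of the injective sheaf and matching associated points via Proposition \ref{ass}, and on the fact that $Ass$ commutes with arbitrary direct sums. Everything else is bookkeeping with the properties of small supports already recorded in Lemma \ref{suppxingzhi} and Proposition \ref{xiaoqinqin}.
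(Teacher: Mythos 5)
Your proposal is correct and follows essentially the same route as the paper's proof: well-definedness via Proposition \ref{xiaoqinqin}, the identity $supp \circ (supp^{-1}(-))_0 = \mathrm{id}$ via Lemma \ref{suppxingzhi}(12), and the identity $(supp^{-1}(-))_0 \circ supp = \mathrm{id}$ by extracting $j_{\overline{x}*}E(\overline{x})$ as a summand of some $E^j(\mathcal{G})$ using Lemma \ref{suppxingzhi}(10), $E$-stability, and closure under summands and direct sums. Isolating the claim that $j_{\overline{x}*}E(\overline{x}) \in \mathcal{X}$ for all $x \in supp(\mathcal{X})$ is just a cleaner packaging of the same argument.
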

\begin{proof}
By Proposition \ref{xiaoqinqin} the map $(supp^{-1}(-))_0$ is well-defined. Let $\mathcal{X}$ be an $E$-stable subcategory of $QCoh(\mathfrak{X})$ closed under direct sums and summands. It is obvious that $\mathcal{X}$ is contained in $(supp^{-1}(supp(\mathcal{X})))_0$. Let $\mathcal{F}$ be a quasi-coherent $\mathcal{O}_{\mathfrak{X}}$-sheaf with $supp(\mathcal{F}) \subset supp(\mathcal{X})$. Then we see from Lemma \ref{suppxingzhi}(10) that for each $i \geq 0$ and $x \in AssE^i(\mathcal{F})$ there exists a sheaf $\mathcal{G} \in \mathcal{X}$ and an integer $j \geq 0$ such that $x \in AssE^j(\mathcal{G})$. Hence $j_{\overline{x}*}E(\overline{x})$ is isomorphic to a direct summand of $E^j(\mathcal{G})$. The sheaf $E^j(\mathcal{G})$ is in $\mathcal{X}$ since $\mathcal{X}$ is $E$-stable, and $j_{\overline{x}*}E(\overline{x})$ is also in $\mathcal{X}$ since $\mathcal{X}$ is closed under direct summands. Therefore the sheaf $E^{i}(\mathcal{F})$ is in $\mathcal{X}$ for every $i \geq 0$ since $\mathcal{X}$ is closed under direct sums, and $\mathcal{F}$ is also in $\mathcal{X}$ since $\mathcal{X}$ is $E$-stable. Thus we conclude that the composite map $(supp^{-1}(-))_0 \circ supp$ is the identity map.

Let $W$ be a subset of $|\mathfrak{X}|$. It is obvious that $supp((supp^{-1} (W))_0)$ is contained in $W$. For $x \in W$ we have $supp (j_{\overline{x}*}E(\overline{x}))=\{x\} \subset W$ by Lemma \ref{suppxingzhi}(12). This implies that $W$ is contained in $supp((supp^{-1} (W))_0)$, and we conclude that the composite map $supp \circ (supp^{-1}(-))_0 $ is the identity map.
\end{proof}

\bibliographystyle{plain}
\bibliography{references}

\end{document}